\definecolor {processblue}{cmyk}{0.96,0,0,0}
\newcommand\cyr{%
\renewcommand\rmdefault{wncyr}%
\renewcommand\sfdefault{wncyss}%
\renewcommand\encodingdefault{OT2}%
\normalfont
\selectfont}
\DeclareTextFontCommand{\textcyr}{\cyr}
\DeclareFontFamily{OT1}{rsfs}{}
\DeclareFontShape{OT1}{rsfs}{n}{it}{<-> rsfs10}{}
\DeclareMathAlphabet{\mathscr}{OT1}{rsfs}{n}{it}
\numberwithin{equation}{section}
\newtheorem{theorem}{Theorem}[section]
\newtheorem{lem}[theorem]{Lemma}
\newtheorem{Corollary}[theorem]{Corollary}
\newtheorem{conjecture}[theorem]{Conjecture}
\newtheorem{prop}[theorem]{Proposition}
\newtheorem{Notation}[theorem]{Notation}
\theoremstyle{definition}
\newtheorem{defn}[theorem]{Definition}
\theoremstyle{remark}
\newtheorem{remark}[theorem]{Remark}
\newtheorem{Remark}[theorem]{Remark}
\newtheorem{example}[theorem]{Example}
\newcommand{\F}{\mathcal{F}}
\newcommand{\ssi}[1]{{\rm ssi}(#1)}
\renewcommand{\tilde}{\widetilde}
\newcommand{\ovl}[1]{\overline{#1}}
\newcommand{\ul}[1]{\underline{#1}}
\newcommand{\wdt}[1]{\widetilde{#1}}
\newcommand{\tovl}[1]{$\overline{\mbox{#1}}$}
\newcommand{\lk}{\operatorname{lk}}
\newcommand{\Ass}{\operatorname{Ass}}
\newcommand{\pol}{\operatorname{pol}}
\newcommand{\grade}{\operatorname{grade}}
\newcommand{\reg}{\operatorname{reg}}
\newcommand{\pd}{\operatorname{pd}}
\newcommand{\Skel}{\operatorname{Skel}}
\renewcommand{\pol}{\operatorname{pol}}
\newcommand{\p}{\mathfrak{p}}
\newcommand{\q}{\mathfrak{q}}
\newcommand{\m}{\mathfrak{m}}
\def\ZZ{{\mathbb Z}}
\def\NN{{\mathbb N}}
\def\grade{{\rm grade}}
\def\supp{{\rm supp}}
\def\Llra{\Longleftrightarrow}
\def\llra{\longleftrightarrow}
\def\Lra{\Longrightarrow}
\def\lra{\longrightarrow}
\def\Lla{\Longleftarrow}
\newcommand{\be}{\begin{equation*}}
	\newcommand{\ee}{\end{equation*}}
\newcommand{\bee}{\begin{equation}}
	\newcommand{\eee}{\end{equation}}
\renewcommand{\bar}{\overline}
\begin{document}
\title[Ordinary and Symbolic Powers of Matroids via Polarization]{Ordinary and Symbolic Powers of Matroids via Polarization}

\author[Lyle]{Justin Lyle}
\email[Justin Lyle]{jll0107@auburn.edu}
\urladdr{https://jlyle42.github.io/justinlyle/}
\address{Department of Mathematics and Statistics \\ 221 Parker Hall\\
	Auburn University\\
	Auburn, AL 36849}

\author[Mantero]{Paolo Mantero}
\email[Paolo Mantero]{pmantero@uark.edu}
\address{Department of Mathematical Sciences\\
University of Arkansas\\
Fayetteville, AR 72701 USA}

\begin{abstract}
In this paper, we propose a uniform approach to tackle problems about squarefree monomial ideals whose powers have good properties. 
We employ this approach to achieve a twofold goal: (i) recover and extend several well--known results in the literature, especially regarding Stanley--Reisner ideals of matroids, and (ii) provide short, elementary proofs for these results. 
Among them, we provide simple proofs of two celebrated results of Minh and Trung, Varbaro, and Terai and Trung \cite{TT12} elegantly characterizing the Cohen-Macaulay property, or even Serre's condition $(S_2)$, of symbolic and ordinary powers of squarefree monomial ideals in terms of their combinatorial (matroidal) structure. Our work relies on the interplay of several combinatorial and algebraic concepts, including dualities, polarizations, Serre's conditions,  matroids,  Hochster-Huneke graphs, vertex decomposability, and careful choices of monomial orders.  
\end{abstract}

\maketitle

\section{Introduction}

Symbolic and ordinary powers of ideals have received a great deal of attention both historically and in recent years, owed significantly to the wealth of geometric and algebraic information that they contain. 
One of the very first published results in this direction was proved, for general radical ideals, by Cowsik and Nori \cite{CN76} (see also Dade's Ph.D. thesis \cite{Da60}). 
\begin{theorem}[Cowsik--Nori 1976, Dade 1960]\footnote[1]{This is how the theorem is stated in \cite{CN76}. However, it is not hard to see that their proof essentially proves the following more general statement: Let $R$ be a Cohen--Macaulay positively graded $k$-algebra, and let $I$ be a graded ideal that is generically a complete intersection. Then $I^m$ is Cohen--Macaulay for every $m\geq 1$ if and only if $I$ is a complete intersection. See also \cite{Wa78}, \cite{AV79} or \cite{HO80}.}
	Let $R=k[x_1,\ldots,x_n]$ and $I$ a radical homogeneous ideal in $R$. Then $I^m$ is Cohen--Macaulay for every $m\geq 1$ if and only if $I$ is a complete intersection.
\end{theorem}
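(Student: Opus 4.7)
The direction $(\Leftarrow)$ is classical: if $I=(f_1,\ldots,f_h)$ with $f_1,\ldots,f_h$ a regular sequence in the regular ring $R$, then the associated graded ring is a polynomial ring $\mathrm{gr}_I(R)\cong (R/I)[T_1,\ldots,T_h]$, so every conormal module $I^m/I^{m+1}$ is free over $R/I$. Since $R/I$ is Cohen--Macaulay, the short exact sequences $0\to I^m/I^{m+1}\to R/I^{m+1}\to R/I^m\to 0$ combined with induction on $m$ yield that $R/I^m$ is Cohen--Macaulay for every $m\ge 1$.

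For $(\Rightarrow)$, assume $R/I^m$ is Cohen--Macaulay for every $m\ge 1$ and set $h=\height(I)$. Localizing at the irrelevant maximal ideal and passing to an infinite residue field via faithfully flat base change, we may work in the local setting. Two preparatory facts are immediate: since $R/I$ is Cohen--Macaulay, $I$ is unmixed of height $h$; and since $I$ is radical in a regular ring, $I_P=PR_P$ is a parameter ideal at every minimal prime $P$ of $I$, so $I$ is generically a complete intersection. Next, Brodmann's theorem ensures that $\depth(R/I^m)$ stabilizes to $\dim R-\ell(I)$, where $\ell(I)$ denotes the analytic spread; on the other hand our hypothesis gives $\depth(R/I^m)=\dim R-h$ for all $m$, so $\ell(I)=h$, i.e.\ $I$ is equimultiple. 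Choose a minimal reduction $J\subseteq I$ generated by $h$ elements; since $J$ has $h$ generators and height $h$ in the Cohen--Macaulay ring $R$, $J$ is itself a complete intersection and $I^{m+1}=JI^m$ for all $m\gg 0$.

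It remains to promote $J\subseteq I$ to equality. The plan is to exploit that every $R/I^m$ is Cohen--Macaulay to conclude, by induction on $m$ using the short exact sequences above, that each conormal module $I^m/I^{m+1}$ is a maximal Cohen--Macaulay $R/I$-module of rank $\binom{h+m-1}{m}$, where the rank is detected by localizing at a minimal prime of $I$ and invoking the generic complete intersection property. A Valabrega--Valla style criterion should then force $\mathrm{gr}_I(R)$ to coincide with the polynomial ring $(R/I)[T_1,\ldots,T_h]$, whence $I/I^2$ is free of rank $h$ and, by Nakayama's lemma, $I$ is minimally generated by $h$ elements; combined with $I\supseteq J$ this gives $I=J$, a complete intersection.

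The hardest step is this final rigidification of $\mathrm{gr}_I(R)$: deducing from the Cohen--Macaulayness and rank of each conormal piece (together with generic complete intersection) that $\mathrm{gr}_I(R)$ is an honest polynomial ring, not merely a Cohen--Macaulay module. An alternative approach bypasses $\mathrm{gr}_I(R)$ entirely and works directly with the Rees algebra $R[It]$, showing it is Cohen--Macaulay of the expected dimension and then applying a theorem of Huneke (or Herzog--Simis--Vasconcelos) to the effect that a generically complete intersection ideal whose Rees algebra is Cohen--Macaulay is of linear type, which combined with $\ell(I)=h$ gives $\mu(I)=h$ and concludes the proof.
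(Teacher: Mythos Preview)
The paper does not prove this theorem at all: it is stated in the introduction as a classical result of Cowsik--Nori and Dade, with a citation to \cite{CN76}, and is invoked later only as a black box (e.g.\ in the proof of Theorem~\ref{main2}, where the implication ``complete intersection $\Rightarrow$ all powers CM'' is dismissed as ``well-known \cite{CN76}''). So there is no in-paper argument to compare your proposal against.

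On the merits of your sketch: the $(\Leftarrow)$ direction and the reduction to $\ell(I)=\height(I)$ via Brodmann are fine. The genuine difficulty, which you flag yourself, is the ``rigidification'' step. Knowing that each $I^m/I^{m+1}$ is maximal Cohen--Macaulay over $R/I$ of the correct generic rank does \emph{not} by itself force freeness---MCM modules of a given rank need not be free---so the appeal to a ``Valabrega--Valla style criterion'' is doing real work that is not spelled out. Your alternative route through the Rees algebra has a similar soft spot: it is not true in general that ``generically complete intersection $+$ Cohen--Macaulay Rees algebra'' implies linear type (the standard linear-type criteria of Huneke and Herzog--Simis--Vasconcelos require conditions like $G_\infty$ together with sliding depth or strong Cohen--Macaulayness of the Koszul homology, not merely CM-ness of $R[It]$), and in any case you have not argued that $R[It]$ is Cohen--Macaulay from the hypothesis on the powers. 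The actual Cowsik--Nori argument proceeds via a multiplicity comparison on the blowup (this is the ``fibers of blowing up'' in their title); your outline points in a compatible direction but does not close the gap.
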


In general, geometric information of ordinary powers and algebraic information of symbolic powers of ideals can be quite hard to compute, and therefore such powers of squarefree monomial ideals have been widely investigated for the past 50 years as their combinatorics give a natural foothold on these problems. Here, we propose a polarization--based approach to these investigations. Polarizations of monomial ideals have been employed extensively in the literature, however their applications to the study of symbolic powers of squarefree monomial ideals has been limited by some drawbacks. We illustrate a notable one: Let $k$ be a field, let $\Delta$ be a simplicial complex on $[n]$, let  $I=I_{\Delta}\subseteq R=k[x_1,\ldots,x_n]$ be the Stanley--Reisner ideal of $\Delta$, and let  $I^{(m)}$ be the {\em $m$-th symbolic power} of $I$. (see Definition \ref{Def-Symb}.) The (standard) polarization of  $I^{(m)}$ is the Stanley--Reisner ideal of a simplicial complex $\Delta^{(m)}$ on $[n]\times [m]$; if $\dim(\Delta)=d-1$, then $\dim(\Delta^{(m)})=d + n(m-1)-1$. The large size of the vertex set and the large dimension of $\Delta^{(m)}$ pose serious challenges to effectively decoding the combinatorics of $\Delta^{(m)}$. 

Our approach consists of a rigid, general part, and a flexible, more specific component. The general part is centered upon the study of the so--called {\em naive dual} $(\Delta^{(m)})^*$ of $\Delta^{(m)}$ and its Alexander dual. An immediate advantage of this approach is that $\dim((\Delta^{(m)})^*)=\dim(\Delta^*)$, and the set $\mathcal F((\Delta^{(m)})^*$ of the facets  of $(\Delta^{(m)})^*$ is very easily described starting from  $\mathcal F(\Delta)$. The second component is usually determined by the specific problem at hand and may vary quite extensively. (see the last paragraph of this introduction.)

While Alexander duals of  polarizations of powers of  very {\em special} classes of squarefree monomial ideals have been investigated before, e.g. in \cite{Fr08,AFH24}, our slightly twisted approach via the naive dual turns out to be quite powerful to study {\em general} questions about squarefree monomial ideals whose powers have good properties. As an illustration, we recover, and often slightly strengthen, a number of theorems in the literature, and we prove a few new results. \medskip

For instance,  in 2011, Minh and Trung \cite{MT11} and Varbaro \cite{Va11} proved -- independently, and with very different techniques -- the following elegant combinatorial characterization of when Stanley--Reisner ideals have Cohen--Macaulay symbolic powers: {\em Let $R$ be a polynomial ring over a field $k$ and let $I_{\Delta}$ be the Stanley--Reisner ideal associated to a simplicial complex $\Delta$. Then $I_{\Delta}^{(m)}$ is Cohen--Macaulay for every $m\geq 1$ if and only if $\Delta$ is a matroidal simplicial complex.}

In 2012, Terai and Trung \cite{TT12} strengthened the above theorem, and, additionally, proved a version of it for ordinary powers. With our approach, we provide new proofs recovering these results and strengthen them by adding new characterizations for these conditions:
\begin{theorem}\label{cmsymbintro}

The following are equivalent:

\begin{enumerate}

\item[$(1)$] $R/I_{\Delta}^{(m)}$ is Cohen-Macaulay for every $m \ge 1$.

\item[$(2)$] The simplicial complex $\Delta^{(m)}$ is vertex decomposable for every $m \ge 1$.	

\item[$(3)$] $R/I_{\Delta}^{(m)}$  is Cohen-Macaulay  for some $m \ge 3$.

\item[$(4)$] $R/I_{\Delta}^{(m)}$ satisfies Serre's condition $(S_2)$  for some $m \ge 3$.

\item[$(5)$] $\Delta$ is a matroid.

\end{enumerate}
\end{theorem}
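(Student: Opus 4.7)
The plan is to establish the cycle $(5)\Rightarrow (2)\Rightarrow (1)\Rightarrow (3)\Rightarrow (4)\Rightarrow (5)$, with the polarization $\Delta^{(m)}$ of $I_\Delta^{(m)}$ and its naive dual $(\Delta^{(m)})^*$ as the central combinatorial objects. Since polarization preserves both Cohen--Macaulayness and Serre's condition $(S_2)$, properties of $R/I_\Delta^{(m)}$ translate to properties of the squarefree setup $R/I_{\Delta^{(m)}}$, to which Reisner's criterion and the Hochster--Huneke graph can be applied. The clean description of $\mathcal F((\Delta^{(m)})^*)$ starting from $\mathcal F(\Delta)$, together with the dimension equality $\dim((\Delta^{(m)})^*)=\dim(\Delta^*)$, keeps the combinatorics manageable as $m$ grows.

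For $(5)\Rightarrow (2)$, assuming $\Delta$ is a matroid, I would construct an explicit shedding order on the vertex set $[n]\times [m]$ of $\Delta^{(m)}$ and induct on $n+m$. A natural candidate is $v=(x,m)$ for an appropriate vertex $x$; the matroid exchange axiom ensures that both $\Delta^{(m)}\setminus v$ and $\lk_{\Delta^{(m)}}(v)$ are again polarizations of symbolic powers of the matroid obtained by deleting or contracting $x$, and therefore vertex decomposable by induction. The concrete verification amounts to reading off $\mathcal F((\Delta^{(m)})^*)$ before and after the move and checking that the shedding condition (every facet of the deletion remains a facet of $\Delta^{(m)}$) holds.

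The implications $(2)\Rightarrow (1)$ and $(1)\Rightarrow (3)\Rightarrow (4)$ are formal: vertex decomposability implies shellability, hence Cohen--Macaulayness, which descends to $R/I_\Delta^{(m)}$ via polarization; Cohen--Macaulayness trivially implies $(S_2)$; and a universal statement on all $m\geq 1$ in particular yields one on some $m\geq 3$.

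The hard direction is $(4)\Rightarrow (5)$. Suppose $R/I_\Delta^{(m)}$ satisfies $(S_2)$ for some $m\geq 3$. Polarization transports $(S_2)$ to $R/I_{\Delta^{(m)}}$, so by Hochster--Huneke the dual graph of $\Delta^{(m)}$ is connected in codimension one. I would then use the explicit facet description of $(\Delta^{(m)})^*$ to translate a codimension-one walk in this graph into a matroid exchange in $\Delta$: given $F,G\in \mathcal F(\Delta)$ and $x\in F\setminus G$, choose a suitable monomial of $I_\Delta^{(m)}$ whose support witnesses both $F$ and $G$ and read off from the connecting walk a vertex $y\in G\setminus F$ with $(F\setminus\{x\})\cup\{y\}\in\Delta$. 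The hypothesis $m\geq 3$ provides enough multiplicity in the symbolic power to encode such an exchange; for $m\leq 2$ only weaker properties of $\Delta$ can be extracted, and these do not force the matroid condition. The main obstacle, and the heart of the argument, is the careful choice of monomial together with a monomial order on $R$ that forces the Hochster--Huneke walk to project onto a genuine single exchange, rather than a longer sequence of unrelated moves; this is exactly where the flexibility afforded by the naive dual construction, paired with a judicious monomial order, is essential.
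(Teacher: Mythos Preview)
Your overall cycle and the formal implications are fine, and for $(4)\Rightarrow(5)$ you have correctly identified that $(S_2)$ for the polarization, read through the Hochster--Huneke graph of $\Delta^{(m)}$, must be translated back into a basis-exchange statement for $\Delta^*$. The paper carries this out by induction on $\grade(\p+\q)$ for $\p,\q\in\Ass(k[\Delta])$; the base case $\grade(\p+\q)=2$ is where $m\ge 3$ enters. One lifts $\p,\q$ to very specific primes $P,Q\in\Ass(k[\Delta^{(m)}])$, with second-index vectors $(1,m,1,\dots,1)$ and $(2,m{-}1,1,\dots,1)$ respectively, chosen so that the constraint $\ssi{\,\cdot\,}\le c+m-1$ forces any locally connected path between them to pass through a prime witnessing the desired exchange. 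No monomial order is involved in this direction.

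The genuine gap is in your $(5)\Rightarrow(2)$. The claim that ${\rm del}_{\Delta^{(m)}}(v)$ and $\lk_{\Delta^{(m)}}(v)$ at $v=(x,m)$ are again polarizations of symbolic powers of the deletion or contraction of $\Delta$ at $x$ is false, and in fact $(x,m)$ is typically not even a shedding vertex. The only primes of $k[\Delta^{(m)}]$ containing $x_{x,m}$ are those of the form $(x_{j_1,1},\dots,x_{j_{c-1},1},x_{x,m})$, since $\ssi{\p}\le c+m-1$ forces every remaining second index to be $1$. Hence if $G\ni(x,m)$ is a facet whose complementary prime has at least two generators with second index $\ge 2$ (for instance $\p_G=(x_{1,2},x_{2,2},x_{3,2})$ when $c=3$, $m=4$), then $G-\{(x,m)\}$ cannot be contained in any facet avoiding $(x,m)$, so the shedding condition fails. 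The paper takes an entirely different route: it orders the variables of $T$ by second index first and then first index, and shows directly that under the induced Lex order the Alexander dual $(I_{\Delta^{(m)}})^\vee$ is \emph{weakly polymatroidal}; this is a short two-case check driven by the matroid exchange axiom on $\Delta^*$. Vertex decomposability of $\Delta^{(m)}$ then follows from the known implication ``weakly polymatroidal dual $\Rightarrow$ vertex decomposable''. So the carefully chosen monomial order you allude to is indeed essential, but it lives in the proof of $(5)\Rightarrow(2)$, not of $(4)\Rightarrow(5)$.
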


\begin{theorem}\label{cmordintro}

 The following are equivalent:

\begin{enumerate}

\item[$(1)$] $R/I_{\Delta}^{m}$ is Cohen-Macaulay for every $m \ge 1$.

\item[$(2)$] $R/I_{\Delta}^{m}$  is Cohen-Macaulay  for some $m \ge 3$.

\item[$(3)$] $R/I_{\Delta}^{m}$ satisfies  Serre's condition $(S_2)$  for some $m \ge 3$.

\item[$(4)$] $\Delta$ is a matroid satisfying the K\"onig property (see Remark \ref{Konig})

\item[$(5)$] $\Delta$ is a complete intersection.

\end{enumerate}

\end{theorem}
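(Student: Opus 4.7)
The plan is to establish the cycle by dispatching the easy implications first and then concentrating on the genuinely nontrivial content, namely $(3)\Rightarrow(5)$ and $(4)\Leftrightarrow(5)$. The implications $(1)\Rightarrow(2)\Rightarrow(3)$ are immediate from the standard hierarchy ``Cohen--Macaulay $\Rightarrow (S_2)$'' combined with the universal-to-existential direction. The implication $(5)\Rightarrow(1)$ is classical: ordinary powers of complete intersection ideals are Cohen--Macaulay, which is the easy direction of the Cowsik--Nori theorem quoted in the introduction. For $(4)\Leftrightarrow(5)$, I would argue combinatorially. A Stanley--Reisner ideal $I_\Delta$ is a complete intersection precisely when its minimal monomial generators have pairwise disjoint supports, i.e.\ when $\Delta$ is a join of boundaries of simplices, and for a matroid this disjointness of circuits is exactly what the König property on the circuit hypergraph encodes (equivalently, $I_\Delta$ is normally torsion-free).

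The core of the argument is $(3)\Rightarrow(5)$. The strategy is to reduce to a squarefree monomial ideal via polarization and then invoke the naive--dual framework of the paper. Suppose $R/I_\Delta^m$ satisfies $(S_2)$ for some $m\geq 3$, and let $\Gamma_m$ be the simplicial complex with $I_{\Gamma_m}=\pol(I_\Delta^m)$, sitting in a larger polynomial ring $R'$. Polarization is a faithfully flat deformation obtained by quotienting out a regular sequence of linear forms, so it preserves depth, dimension, and hence $(S_2)$; consequently $R'/I_{\Gamma_m}$ also satisfies $(S_2)$. Now apply the naive--dual framework developed in the general part of the paper: the naive dual $(\Gamma_m)^*$ has dimension equal to $\dim(\Delta^*)$, and its facet set is described explicitly in terms of $\mathcal F(\Delta)$ together with the exponent data of the $m$-th power. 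The $(S_2)$ condition, through the Hochster--Huneke connectivity criterion, translates into strong connectedness of a specific intersection graph on these facets. The concluding step is to verify that, for $m\geq 3$, such connectivity can hold only when the minimal nonfaces of $\Delta$ are pairwise disjoint, which is the complete intersection condition on $I_\Delta$.

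The main obstacle is the last step above: extracting from the abstract Hochster--Huneke connectivity for $\Gamma_m$ the rigid combinatorial statement that any two minimal nonfaces of $\Delta$ are disjoint. The hypothesis $m\geq 3$ must enter essentially, because by Theorem \ref{cmsymbintro} any matroid $\Delta$ (including many non complete-intersections) has $R/I_\Delta^{(2)}$ Cohen--Macaulay, and for the ordinary square $R/I_\Delta^{2}$ the $(S_2)$ rigidity is simply not sharp enough to force disjointness. Accordingly, the argument needs to genuinely exploit the presence of at least three ``exponent layers'' in the polarization. I expect this to be achieved by a careful choice of monomial order together with the vertex--decomposability technology already in play, producing an explicit bad configuration in $\Gamma_m$ whenever two minimal generators of $I_\Delta$ share a variable. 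Isolating this obstruction cleanly, while avoiding a proliferation of combinatorial casework on $\Gamma_m$, is where the proof is most delicate.
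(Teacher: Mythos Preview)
Your easy implications are fine, but your strategy for $(3)\Rightarrow(5)$ misses the paper's key reduction and leaves exactly the hard part open. The paper does \emph{not} analyze $\mathrm{pol}(I_\Delta^m)$ directly. The first move is that $(S_2)$ for $R/I_\Delta^m$ forces $I_\Delta^m$ to be unmixed, hence $I_\Delta^m=I_\Delta^{(m)}$; this immediately puts you in the setting of Theorem~\ref{cmsymbintro}, so $\Delta$ is a matroid. Without this reduction your plan stalls: the explicit facet description of the naive dual (Proposition~\ref{polprimes}(2)) is for $\mathrm{pol}(I_\Delta^{(m)})$, not for $\mathrm{pol}(I_\Delta^m)$, so the Hochster--Huneke connectivity machinery you want to invoke has no grip on the ordinary power in general. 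Even after obtaining ``$\Delta$ is a matroid'', the paper does \emph{not} try to read off ``disjoint minimal nonfaces'' from connectivity of $\Gamma_m$. Instead it passes to the simplified dual matroid and runs an induction on $c=\mathrm{ht}(I_\Delta)$: the base case $c=2$ is a monomial star configuration, where Lemma~\ref{stconf} shows $I^m=I^{(m)}$ forces $I=\mathfrak m$; the inductive step localizes at $(x_1,\dots,x_{n-1})$ (i.e.\ deletes a vertex), uses minimality of a counterexample to make the deletion a complete intersection, counts parallel classes to force $n=c+1$, and finishes via star configurations again. None of this is the ``explicit bad configuration in $\Gamma_m$'' you anticipate.

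Your sketch of $(4)\Leftrightarrow(5)$ is also too loose. The K\"onig property for a matroid does \emph{not} transparently say ``circuits are disjoint''; the paper's $(4)\Rightarrow(5)$ again passes to the simplified dual, takes a regular sequence $M_1,\dots,M_c\in I$ with $\prod M_i=x_1\cdots x_n$, and uses that the $1$-skeleton of the simplified $\Delta^*$ is $K_n$ to force $|\supp(M_i)|=1$ for every $i$. For $(1)\Rightarrow(4)$ the paper uses $x_1\cdots x_n\in I^{(c)}=I^c$ to extract the required regular sequence. Your claim that K\"onig is ``equivalent to $I_\Delta$ being normally torsion-free'' is not correct in general and is not what is used here.
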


The above-mentioned theorem of Minh and Trung and Varbaro is the equivalence of (1) and (5) in Theorem \ref{cmsymbintro}. Terai and Trung proved the equivalence of all conditions except (2) \cite[Thm~3.6]{TT12}.  Similarly, the theorem of Cowsik and Nori proves the equivalence of (1) and (5) in Theorem \ref{cmordintro} under the more general assumption that $I=\sqrt{I}$ is any radical ideal. Terai and Trung proved the equivalence of all conditions in Theorem \ref{cmordintro} except (4) \cite[Thms~4.3 and 4.6]{TT12}. In fact, parts (2) in Theorem \ref{cmsymbintro} and (4) in Theorem \ref{cmordintro} are new conditions, obtained specifically with our approach. 

In a similar vein, we recover the main result of \cite{MT17}, i.e. an explicit formula,  first found by Minh and Trung, for the Castelnuovo--Mumford regularity of the symbolic powers of Stanley--Reisner ideals associated to matroidal simplicial complexes, see \cite[Thm~4.5]{MT17}:
\begin{theorem}\label{regintro}
Let $\Delta$ be a matroid, then 
$$\reg(I_{\Delta}^{(m)}) = (m-1)c(\Delta) + r({\rm core}(\Delta)) + 1,$$
where $c(\Delta)$ is the circumference of $\Delta$, and ${\rm core}(\Delta)$ is the core of $\Delta$.
\end{theorem}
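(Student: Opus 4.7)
The plan is to combine the polarization--naive dual framework of the paper with Terai-type duality to reduce the regularity calculation to a combinatorial count on $(\Delta^{(m)})^*$, and then exploit the matroid exchange axioms.

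First, I would reduce to the case $\Delta = \core(\Delta)$. Each coloop $v$ of $\Delta$ is a free variable for $I_{\Delta}^{(m)}$, so removing it leaves the regularity unchanged. The quantities $c(\Delta)$ and $r(\core(\Delta))$ are likewise invariant under removing coloops, so it suffices to treat the case where $\Delta$ has no coloops. In this case the target formula simplifies to $\reg(I_\Delta^{(m)}) = (m-1)c(\Delta) + r(\Delta) + 1$.

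Next, since polarization preserves all graded Betti numbers, $\reg(I_{\Delta}^{(m)})=\reg(I_{\Delta^{(m)}})$. By Theorem~\ref{cmsymbintro} the complex $\Delta^{(m)}$ is Cohen--Macaulay (indeed vertex decomposable), so by Terai's formula $\reg(I_{\Delta^{(m)}})=\pd(R'/I_{(\Delta^{(m)})^\vee})$; equivalently, via Hochster's formula, one seeks the largest $j$ such that $\tilde H_{j-1}((\Delta^{(m)})_W;k)\neq 0$ for some $W \subseteq [n]\times[m]$. At this point I would invoke the paper's observation that the facets of the naive dual $(\Delta^{(m)})^*$ admit an explicit description starting from the bases of $\Delta$, and translate this into an explicit description of the minimal non-faces of $\Delta^{(m)}$, i.e., the minimal generators of $I_{(\Delta^{(m)})^\vee}$.

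Having this description in hand, I would split the argument into two halves. For the upper bound, I would show that every minimal generator of $I_{(\Delta^{(m)})^\vee}$ has total degree at most $(m-1)c(\Delta)+r(\Delta)+1$: the largest ``stackable'' contribution across the $m$ layers of the polarized vertex set $[n]\times[m]$ is bounded layer-by-layer by the circumference $c(\Delta)$, with one remaining layer accounting for the $r(\Delta)+1$ term. For the lower bound, I would fix a circuit $C$ of $\Delta$ of size $c(\Delta)$, together with a basis $B$ of $\Delta$ containing $C\setminus\{v\}$ for some $v\in C$, and from these combinatorially construct a distinguished non-face of $\Delta^{(m)}$ of exactly the predicted total size, verifying its minimality directly from the matroidal description of facets of $(\Delta^{(m)})^*$.

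The principal obstacle I anticipate is the combinatorial bookkeeping for the upper bound: one must use a layered circuit-elimination argument to rule out minimal generators of higher multi-degree across the $m$ layers of the polarization, and this is the step where the matroid axioms (and not merely Cohen--Macaulayness of $\Delta^{(m)}$) seem to enter essentially. Once this stacking bound is established, matching it with the explicit witness from the lower bound yields the formula.
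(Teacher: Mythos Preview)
Your reduction to the coloopless case, the polarization step, and the identification $\reg(k[\Delta^{(m)}])=\max\{j:\tilde H_{j-1}((\Delta^{(m)})|_W;k)\neq 0\text{ for some }W\}$ are all fine and match the paper. The proposal breaks down immediately after that.

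First, there is a terminological slip: the minimal non-faces of $\Delta^{(m)}$ are the minimal generators of $I_{\Delta^{(m)}}$, \emph{not} of $I_{(\Delta^{(m)})^\vee}$. The generators of the Alexander dual correspond to facets of $(\Delta^{(m)})^*$ and all have degree $c$, so bounding their degree is vacuous. More importantly, even if you meant the minimal non-faces of $\Delta^{(m)}$, their maximal size controls only the \emph{initial degree strand} of $I_{\Delta^{(m)}}$ and gives merely $\reg(I_{\Delta^{(m)}})\ge \omega(I_{\Delta^{(m)}})$; it does not compute the regularity. A single example already kills both halves of your argument at $m=1$: take $I_\Delta=(x_1x_2,\,x_3x_4x_5)$, a coloopless matroid with $c(\Delta)=3$ and $r(\Delta)=3$. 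The formula predicts $\reg(I_\Delta)=4$, which is correct (Koszul), but the largest minimal non-face has size $3$. So there is no minimal non-face of the ``predicted total size'' for your lower-bound witness, and bounding non-face sizes by $r(\Delta)+1$ would not give the upper bound either. The $W$ that realizes the top homology is generally \emph{not} a minimal non-face; for $m=1$ it is all of $[n]$.

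What the paper actually does is quite different and leans on a fact you have not used: by Theorem~\ref{cmsymbintro}(2), the Alexander dual $(J_m)^\vee$ is weakly polymatroidal under a specific monomial order, hence has \emph{linear quotients}. For an ideal with linear quotients, $\pd$ equals $\max_M\grade(C_M)$ where $C_M=(N<M):M$, so by Terai's formula $\reg(I_\Delta^{(m)})=\max_M\grade(C_M)+1$. The lower bound (Lemma~\ref{C_M}) comes from a delicate choice of variable order tied to a maximum circuit $U$, producing an $M$ with $\grade(C_M)=(m-1)|U|+(n-c)$. The upper bound is not a ``layered circuit-elimination'' on generators but an induction on $m$ that passes to the deletion matroid and uses the inequality $c(\Delta)\ge (n-j)/(c-j)$ for $0\le j\le c-2$ (Lemma~\ref{c(Delta)}). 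If you want to salvage a Hochster-style argument, the lower bound \emph{can} be done by exhibiting a $W$ with nontrivial $\tilde H_{(m-1)c(\Delta)+d-1}$ (cf.\ Proposition~\ref{circuit1}), but the upper bound genuinely requires the linear-quotients machinery or something of comparable strength, not a bound on non-face sizes.
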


Additionally, our methods allow us to recover the main result of \cite{MTT19}, i.e. the following theorem of Minh, Terai, and Thuy characterizing the level property of quotients by symbolic or ordinary powers of Stanley--Reisner rings, see \cite[Theorem 4.3]{MTT19}. Recall that a graded quotient of $R$ is a {\em level algebra} if it is Cohen-Macaulay and its canonical module is generated in single degree. 

\begin{theorem}\label{levelintro}
The following are equivalent:
\begin{enumerate}

\item[$(1)$] $I_{\Delta}$ is a complete intersection generated in a single degree.
\item[$(2)$] $R/I_{\Delta}^m$ is a level algebra for all $m \ge 1$.
\item[$(3)$] $R/I_{\Delta}^m$ is a level algebra for some $m \ge 3.$
\item[$(4)$] $R/I_{\Delta}^{(m)}$ is a level algebra for all $m \ge 1$.
\item[$(5)$] $R/I_{\Delta}^{(m)}$ is a level algebra for some $m \ge 3$.

\end{enumerate}
\end{theorem}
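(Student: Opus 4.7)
The implications $(2)\Rightarrow(3)$ and $(4)\Rightarrow(5)$ are tautological. Before attacking the substance I would record one reduction: if $I_\Delta=(x_{S_1},\dots,x_{S_t})$ is a squarefree monomial complete intersection, the supports $S_i$ are pairwise disjoint and localizing at any minimal prime of $I_\Delta$ gives the maximal ideal of a regular local ring, so $I_\Delta^m$ has no embedded primes and $I_\Delta^m=I_\Delta^{(m)}$ for all $m\ge 1$. In particular $(1)\Rightarrow(2)$ and $(1)\Rightarrow(4)$ are the same statement, and once $(5)\Rightarrow(1)$ is in hand, the whole cycle closes via $(1)\Rightarrow(2)\Rightarrow(3)\Rightarrow(1)$ together with $(1)\Rightarrow(4)\Rightarrow(5)\Rightarrow(1)$. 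Thus the real content lies in proving $(1)\Rightarrow(2)$, $(3)\Rightarrow(1)$, and $(5)\Rightarrow(1)$.

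For $(1)\Rightarrow(2)$, I would resolve $R/I_\Delta^m$ by the standard $L$-complex (of Eagon--Northcott / Buchsbaum--Eisenbud type) attached to the regular sequence $x_{S_1},\dots,x_{S_t}$. When all generator degrees agree (say $|S_i|=d$ for every $i$), each free module $F_i$ in this resolution is generated in the single internal degree $(m+i-1)d$; in particular the last free module $F_t$ is pure, so $\omega_{R/I_\Delta^m}$ is generated in one internal degree and $R/I_\Delta^m$ is level.

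For $(3)\Rightarrow(1)$, levelness forces Cohen--Macaulayness of $R/I_\Delta^m$ for some $m\ge 3$, so Theorem \ref{cmordintro} gives that $I_\Delta$ is a complete intersection, say with generator degrees $d_1,\dots,d_t$. The last free module $F_t$ of the $L$-complex of $R/I_\Delta^m$ has twists indexed by tuples $(\alpha_1,\dots,\alpha_t)$ of nonnegative integers of fixed total, with corresponding internal degrees $\sum_i\alpha_i d_i+\mathrm{const}$; requiring all such degrees to agree forces $d_1=\cdots=d_t$, which is $(1)$.

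The main obstacle is $(5)\Rightarrow(1)$. Levelness plus Cohen--Macaulayness yields, via Theorem \ref{cmsymbintro}, that $\Delta$ is a matroid, but this falls short of being a complete intersection with equal-size minimal non-faces. My plan is to combine the regularity formula of Theorem \ref{regintro} with the polarization framework of the paper: since polarization is effected by a regular sequence of linear forms, it preserves levelness, so $R/I_\Delta^{(m)}$ is level iff $R/I_{\Delta^{(m)}}$ is, and the latter may be probed via Hochster's formula on the vertex-decomposable complex $\Delta^{(m)}$ (whose naive dual $(\Delta^{(m)})^*$ has a very transparent facet description, as emphasized in the introduction). Theorem \ref{regintro} pins down the unique candidate internal degree for a canonical-module generator as $\dim(R/I_\Delta)-(m-1)c(\Delta)-r(\mathrm{core}(\Delta))$; for any matroid $\Delta$ that is not a complete intersection with uniform circuit size, I would exhibit an additional generator of $\omega_{R/I_{\Delta^{(m)}}}$ arising from a subcritical circuit or a non-trivial parallel class, located in a strictly smaller internal degree, contradicting levelness. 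Once $\Delta$ is forced to be a complete-intersection matroid, the argument of $(3)\Rightarrow(1)$ closes the loop. The delicate step is the combinatorial audit at $(\Delta^{(m)})^*$ that produces this extra canonical generator in the strictly smaller degree, as it must be carried out uniformly in $m\ge 3$ and must distinguish uniform matroids of rank $>1$ from genuine complete intersections.
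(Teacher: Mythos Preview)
Your reductions and the arguments for $(1)\Rightarrow(2)$ and $(3)\Rightarrow(1)$ are correct and essentially match the paper: it also invokes the Buchsbaum--Eisenbud family of resolutions for $(1)\Rightarrow(2)$, and it routes $(3)$ through Theorem~\ref{cmordintro} before reducing to the symbolic case. Your direct reading of the top syzygy twists of the $L$-complex to force equal generator degrees is a clean variant of that reduction.

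The genuine gap is $(5)\Rightarrow(1)$, precisely at the step you yourself flag as ``delicate''. You correctly obtain that $\Delta$ is a matroid, that polarization preserves levelness, and that the game is to produce two top-Betti witnesses for $\Delta^{(m)}$ in distinct internal degrees via Hochster's formula---this is exactly the paper's plan. But your proposed mechanisms for the second witness do not close the argument. A ``subcritical circuit'' only exists when circuit sizes differ, and that case is indeed handled first: Proposition~\ref{circuit1} builds, from any circuit $C$, a subset $W$ of size $(m-1)|C|+n$ with $\tilde{H}_{|W|-c-1}(\Delta^{(m)}|_W)\neq 0$, and comparing two circuits of different sizes already contradicts levelness (Proposition~\ref{equigen}). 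Once all circuits have the same size, however, Theorem~\ref{regintro} returns the \emph{same} top degree for every circuit and cannot separate a complete intersection from, say, a uniform matroid $U_{r,n}$ with $1<r<n-1$; and ``non-trivial parallel class'' is not the operative invariant here. The idea you are missing is Proposition~\ref{ci1}: when $\Delta$ is equigenerated but not a complete intersection, one chooses two \emph{overlapping} circuits $C\neq C'$ with $|C\cap C'|$ \emph{maximal} and constructs $W'=\alpha_C^{\,m-1}\cup\alpha_{C'\setminus C}^{\,2}\cup\{(i,1):i\notin C\cup C'\}$, of size $(m-1)|C|-|C\cap C'|+n<|W|$, together with an explicit join of two boundary spheres and a union of links that sits inside $\Delta^{(m)}|_{W'}$ and has nonzero homology in the required degree. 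Verifying that this join is a subcomplex and that the key faces are honest facets of $\Delta^{(m)}|_{W'}$ uses three matroid lemmas (\ref{primehastwo}, \ref{maximaloverlap}, \ref{doubleswap}) governing how bases of $\Delta^*$ meet $C\cup C'$; the maximal-overlap hypothesis is exactly what makes Lemma~\ref{maximaloverlap} work, and the assumption $m\ge 3$ is used to block one specific extension of the candidate facet. This overlapping-circuits construction---not a smaller circuit, not a parallel class---is the content your outline has not supplied.
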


One interesting consequence of Theorems \ref{cmsymbintro}, \ref{cmordintro}, \ref{levelintro} is that, if the 3rd (ordinary or symbolic) power of $I_{\Delta}$ has one of the ``good" properties stated in the theorems, then all such powers have the same property, and the combinatorial structure of $\Delta$ is strongly constrained.  The proofs of all the versions of these theorems required the introduction of a good amount of new and interesting machinery, in some cases spanning over multiple works. For instance, for Theorem \ref{cmsymbintro}, \cite{MT11} and \cite{TT12} heavily rely on Takayama's Formula, which is a multi-graded version of Hochster's formula providing a combinatorial characterization of the graded pieces of the local cohomology modules $H_{(x_1,\ldots,x_n)}^i(R/I_{\Delta}^{(m)})$, and previous results proved in \cite{MTr09}. \cite{MT11} also employs techniques from linear programming, while \cite{Va11} uses deep results in the theory of blowup algebras.  

\bigskip

Our uniform approach allows us to recover Theorems \ref{cmsymbintro}, \ref{cmordintro},  \ref{regintro} and \ref{levelintro} via {\em short and elementary} proofs. Additionally, the elementary combinatorics of our proofs provide concrete reasons as to why the characterizations require third or higher symbolic powers while the second is insufficient; see e.g. Remark \ref{m=3}. In regard to the flexible component of the approach, for the proofs of \ref{cmsymbintro}, and \ref{cmordintro}  we employ the so--called Hochster--Huneke graph and a careful choice of total orders of the vertex set $[n]\times [m]$ and  $\mathcal F((\Delta^{(m)})^*)$. As a consequence, we can finish the proof via elementary techniques. The proof of Theorem \ref{regintro} relies on Theorem \ref{cmsymbintro} and very delicate choices of orderings. (see Lemma \ref{C_M}.) 

On the other hand, to illustrate the potential  flexibility in the second part of the approach, for the proof of Theorem \ref{levelintro} we have employed elementary simplicial homology techniques, namely we calculate explicit homology elements that represent non--zero graded Betti numbers.

In addition to the above, we recover and extend a theorem by Herzog, Takayama and Terai (Theorem \ref{mixedSymbPwrs}), we provide a new, precise characterization of when is $R/I_\Delta^{(2)}$ Cohen--Macaulay (Proposition \ref{2ndSymbPwr}), and we prove that when $\Delta$ is a matroidal simplicial complex, then the polarization of $I_\Delta^{(m)}$ is glicci for every $m\geq 1$. (Corollary \ref{glicci}.) All our results are independent of and complementary to the ones in \cite{MN25}.
 
Section 2 contains preliminaries and basic notation. In Section 3 we prove Theorem \ref{cmsymbintro} and provide new results obtained thanks to our approach. In Section 4 we prove Theorem \ref{cmordintro}. In Section 5 we prove Theorem \ref{regintro} and in Section 6 we prove Theorem \ref{levelintro}.

\section{Preliminaries and notation}\label{not}

For the purpose of this paper, we will use the following notation.
\begin{Notation}
 $k$ is a fixed field of {\em any} characteristic, and $R:=k[x_1,\dots,x_n]$.
 
 For any monomial ideal $J$ in $R$, the set $G(J)$ consists of the unique minimal generating set consisting of monomials.

$\Delta$ is a simplicial complex on the vertex set $[n]$, and $\mathcal{F}(\Delta)$ is the set of its facets. 

$I_{\Delta}:=\left(\prod_{i \in \sigma} x_i \mid \sigma \in [n] \setminus \Delta \right)$ is the Stanley-Reisner ideal of $\Delta$ in $R$, and we write $k[\Delta]:=R/I_\Delta$. 

We set $d:=\dim(k[\Delta])=\dim(\Delta)+1$, $c:=n-d$. 

Given $F \in \mathcal{F}(\Delta)$, we let $\p_F:=(x_i \mid i \in [n]-F)$.

When we say that \ul{$\Delta$ has an algebraic property $\mathcal P$} we mean that $k[\Delta]$ has property $\mathcal P$. 
 
\end{Notation}

\begin{Notation}\label{order}
In a few occasions we will need a total order $s_1,s_2,\ldots$ on the elements of some finite set $S$. This requires one to philosophically choose if one should regard $s_1$ as the ``largest" or the ``smallest" element in $S$ under the chosen order.

For consistency with the above notation, where smaller indices correspond to earlier elements, we will write $s_1<s_2<\ldots$, i.e. we write elements in increasing order. As a consequence, all inequalities will follow this convention, which can be summarized as ``smaller indices, variables, and monomials come first".

For instance, under this convention, to ensure that $s_1=x^2y$ comes before $s_2=xy^2$ under the standard lex order where $x$ has bigger weight than $y$, we use the order $x<y$ and say that under this Lex order we have $x^2y<xy^2$. 
\end{Notation}

Since our results are not affected by the existence of vertices in $[n]$ which are not contained in any face, we will assume that $\{i\}\in \Delta$ for every $i\in [n]$.

It is well known that $I_{\Delta}=\bigcap_{F \in \mathcal{F}} \p_{F}$, so for $F\in \F(\Delta)$, one has $\p_F\in \Ass_R{k[\Delta]}$, i.e. $\p_F$ is an associated prime of $R/I_{\Delta}$
The \textit{Alexander dual} $\Delta^{\vee}$ of the simplicial complex $\Delta$ is the simplicial complex $\Delta^{\vee}=\langle \sigma \subseteq [n] \mid [n]-\sigma \notin \Delta \rangle$.
The \textit{Alexander dual} of $I_\Delta$ is the ideal ${I_{\Delta}}\!^{\vee}:=I_{\Delta^{\vee}}=\left( \prod_{x_i \in \p_F} x_i \mid F \in \mathcal{F}(\Delta) \right)$.   

For any subset $F\subseteq [n]$, we write $F^*$ for the complement of $F$ in $[n]$, i.e. $F^*:=[n]-F$. We let $\Delta^*$ be the simplicial complex whose facets are the complement of the facets of $\Delta$, i.e. $\F(\Delta^*)=\{F^*\,\mid\, F \in \mathcal{F}(\Delta)\}$. We refer to $\Delta^*$ as the (naive) {\em dual} of $\Delta$. In general, $\Delta^*$ is {\em not} the Alexander dual of $\Delta$. 
In view of the bijection 
$$
\F(\Delta^*) \llra \Ass_R(k[\Delta]) \qquad \text{ given by }\qquad (i_1,\dots,i_c) \longmapsto (x_{i_1},\dots,x_{i_c})
$$
we will often silently identify $\F(\Delta^*)$ with $\Ass_R(k[\Delta])$.

\subsection{Complete intersections}
In view of Theorem \ref{cmordintro}, we recall that a proper homogeneous ideal $I$ in a graded ring $S$ is a {\em complete intersection of grade $r$} if it can be generated by a homogeneous regular sequence of $r$ elements, i.e. if one can write $I=(f_1,\ldots,f_r)$ where the $f_i$ are homogeneous, $f_1\neq 0$ and $f_i$ is a non-zero divisor of $S/(f_1,\ldots,f_{i-1})$, for every $i\geq 1$.

The {\em grade} of $I$, denoted ${\rm grade}(I)$, is the largest grade of a complete intersection in $I$.  When $I$ is a squarefree monomial ideal, one can easily characterize complete intersections combinatorially.

\begin{lem}\label{ci}
The following are equivalent for a simplicial complex $\Delta$:
\begin{enumerate}
	\item[$(1)$] $I_{\Delta}$ is a complete intersection of grade $c$;
	\item[$(2)$] There is a partition $V_1,\ldots,V_c$ of $[n]$ so that if one lets $\Gamma_i$ be the complex whose facets are the (isolated) vertices of $V_i$ ,then $\Delta^*$ is the join $\Gamma_1\star \cdots \star\Gamma_c$.
\end{enumerate} 
\end{lem}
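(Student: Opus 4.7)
The plan is to prove the equivalence by translating between the minimal monomial generators of $I_\Delta$ (for the direction $(1)\Rightarrow(2)$) and the combinatorics of $\F(\Delta^*)=\Ass_R(k[\Delta])$ (for $(2)\Rightarrow(1)$), via the classical fact that a family of squarefree monomials in $R$ forms a regular sequence if and only if their supports are pairwise disjoint.

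For $(1)\Rightarrow(2)$: let $G(I_\Delta)=\{f_1,\ldots,f_c\}$, which has exactly $c$ elements because $I_\Delta$ is a graded complete intersection of grade $c$. Set $V_i:=\supp(f_i)$. I first show the $V_i$ are pairwise disjoint: if $x_k\in V_i\cap V_j$ for some $i\neq j$, then $(f_i/x_k)\,f_j - (f_j/x_k)\,f_i = 0$ is a nontrivial syzygy on $f_i,f_j$ (it cannot be a Koszul combination, as that would force a factor of $1/x_k$), contradicting the fact that any permutation of the minimal generators of a CI is a regular sequence. Given the disjointness, a subset $\sigma\subseteq[n]$ is a face of $\Delta$ exactly when $V_i\not\subseteq \sigma$ for every $i$, so the facets of $\Delta$ are the sets
\[
\big([n]\setminus \bigcup_j V_j\big)\ \sqcup\ \bigsqcup_{j=1}^c (V_j\setminus\{v_j\}),\qquad v_j\in V_j.
\]
Taking complements in $[n]$ gives $\F(\Delta^*)=\{\{v_1,\ldots,v_c\}:v_j\in V_j\}$, which is precisely the facet set of the join $\Gamma_1\star\cdots\star\Gamma_c$, after any ``free'' vertices (those not in $\bigcup V_j$, which lie in every facet of $\Delta$) are absorbed into some $V_i$ to produce an honest partition of $[n]$.

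For $(2)\Rightarrow(1)$: under the join hypothesis one reads off $\F(\Delta^*)=\{\{v_1,\ldots,v_c\}:v_j\in V_j\}$ directly from the definition of the join of isolated-vertex complexes. Using the bijection $\F(\Delta^*)\leftrightarrow\Ass_R(k[\Delta])$ recalled in the preliminaries together with $I_\Delta=\bigcap_{\p\in\Ass}\p$, one obtains
\[
I_\Delta \;=\; \bigcap_{v_j\in V_j}(x_{v_1},\ldots,x_{v_c}).
\]
A short transversal calculation then identifies this intersection with $(f_1,\ldots,f_c)$, where $f_i:=\prod_{v\in V_i}x_v$: the minimal monomial generators of the intersection correspond to minimal sets meeting every $\{v_1,\ldots,v_c\}$, and the disjointness of the $V_j$ forces these to be exactly the full sets $V_i$. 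Since the $V_i$ are pairwise disjoint, the $f_i$ have pairwise disjoint supports and thus form a regular sequence of length $c$, so $I_\Delta$ is a CI of grade $c$.

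The main obstacle I anticipate is really just careful bookkeeping: the squarefree-monomial dichotomy ``disjoint supports $\Leftrightarrow$ regular sequence'' and the dual minimal-transversal computation are both elementary but must be spelled out precisely. A minor subtlety is the treatment of vertices lying in every facet of $\Delta$ (cone points), which do not appear in any facet of $\Delta^*$ and must be allocated to the partition in the forward direction in order to match the letter of condition $(2)$.
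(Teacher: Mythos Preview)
The paper does not include a proof of this lemma; it is stated as an elementary preliminary and immediately followed by an example. Your argument is the natural one and is correct in substance: the key points are indeed that squarefree monomials form a regular sequence if and only if their supports are pairwise disjoint, and the transversal computation identifying $\bigcap_{v_j\in V_j}(x_{v_1},\dots,x_{v_c})$ with $(\,\prod_{v\in V_1}x_v,\dots,\prod_{v\in V_c}x_v\,)$.

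There is one genuine (though minor) gap: your proposed handling of cone points at the end of $(1)\Rightarrow(2)$ does not work. If $v\in[n]$ lies in no $V_i=\supp(f_i)$, then $v$ is in every facet of $\Delta$ and hence in no facet of $\Delta^*$. If you ``absorb'' $v$ into, say, $V_1$, then $\Gamma_1$ acquires $\{v\}$ as a facet and the join $\Gamma_1\star\cdots\star\Gamma_c$ now has facets containing $v$, so it is no longer equal to $\Delta^*$. In fact, condition $(2)$ as stated forces every vertex of $[n]$ to appear in some facet of $\Delta^*$, i.e.\ $\Delta$ has no cone points; so the equivalence holds verbatim only under that hypothesis. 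The correct resolution is simply to invoke the paper's standing convention that cone points may be discarded (so that $\bigcup_i\supp(f_i)=[n]$ automatically), rather than to absorb them into the partition.
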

Following \cite{TT12}, we say that $\Delta$ a {\em complete intersection} if any of the conditions of Lemma \ref{ci} is satisfied. E.g. if $\F(\Delta)=\{1234, 1235, 1246, 1256, 1346, 1356, 2347, 2357, 2467, 2567, 3467, 3567\}$, then $\Delta$ is a complete intersection. In fact, the partition $V_1:=\{1,7\}$, $V_2:=\{2,3,6\}$ and $V_3:=\{4,5\}$ satisfies Lemma \ref{ci}(2), so $I_{\Delta}=(x_1x_7, x_2x_3x_6, x_4x_5)$ is a complete intersection of grade 3.
 
 \subsection{Cohen--Macaulayness and Serre's conditions}
If $I$ is any homogeneous ideal in $R$, the {\em depth} of $R/I$ is the common length of any maximal regular sequence in the homogeneous maximal ideal of $R/I$. One always has ${\rm depth}(R/I)\leq \dim(R/I)$; the ring $R/I$ is called {\em Cohen--Macaulay} if equality is achieved. For instance, it is well-known that if $\Delta$ is shellable then $R/I_{\Delta}$ is Cohen--Macaulay (over any field $k$). 

We say $R/I$ is {\em level} if it is Cohen-Macaulay and one of the following two equivalent conditions hold: the canonical module of $R/I$ is generated in a single degree,  or all minimal generators of the last module in the minimal graded free resolution of $R/I$ over $R$ have the same degree. E.g. if $\F(\Delta)=\{1, 2, 3\}$, then $I_{\Delta}=(x_1x_2, x_1x_3, x_2x_3)$, and $\Delta$ is level. In contrast, if $\F(\Delta)=\{134, 135, 145, 245\}$, then $I_{\Delta}=(x_1x_2, x_2x_3, x_3x_4x_5)$, and $\Delta$ is Cohen--Macaulay but not level. 

One weakening of the Cohen--Macaulay property is the so--called {\em Serre's condition} $(S_{\ell})$. For any $\ell \in \NN_0$, the ring $R/I$ satisfies $(S_{\ell})$ if,  for all $\p\in V(I)$, the localization $(R/I)_\p$ satisfies ${\rm depth}((R/I)_\p)\geq \min \{\dim(R/I)_\p, \ell\}$. It follows immediately from the definition that if $R/I$ is $(S_\ell)$ then $R/I$ is Cohen--Macaulay in codimension $\ell$, i.e. $R_\p/I_\p$ is Cohen--Macaulay whenever  $\dim(R_\p/I_\p)\leq \ell$. So Serre's condition $(S_\ell)$ can be viewed as a strengthening of ``being Cohen--Macaulay in codimension $\ell$". It follows from the definition that $R/I$ is Cohen--Macaulay if and only if $R/I$ has Serre's condition ($S_\ell$) for every $\ell\geq 1$. Also, $k[\Delta]$ is ($S_1$) for any $\Delta$, and if $k[\Delta]$ has Serre's condition ($S_\ell$) for some $\ell\geq 2$, then $\Delta$ is pure. Background on these conditions outside of the squarefree monomial ideal setting can be found, for instance, in \cite{BH93}.

For a Stanley--Reisner ideal $I_{\Delta}$, there are useful combinatorial characterizations of each of the above properties. For instance, part (a) of the following theorem is a celebrated result by Reisner combinatorially characterizing the Cohen--Macaulay property, {\cite[Thm~1.]{Re76}. Parts (b) and (c) can be found, for instance, in {\cite{Te07}} or {\cite[Prop~2.4,~2.5]{HL21}}.

\begin{theorem} Let $\Delta$ be a simplicial complex.
\begin{enumerate}
\item[$(a)$] $\Delta$ is Cohen--Macaulay if and only if $ \widetilde{H}_i({\rm \lk}_{\Delta}(F); k)=0$ whenever $F \in \Delta$ and $i < \dim({\rm lk}_{\Delta}(F))$.
\item[$(b)$] ${\rm depth}(k[\Delta])\geq t$  if and only if $\widetilde{H}_{i-1}({\rm \lk}_{\Delta}(F); k)=0$ for all $F\in \Delta$ with $i+|F|<t$.
\item[$(c)$] $\Delta$ has Serre's condition ($S_\ell$) for some $\ell\geq 2$ if and only if 
$\widetilde{H}_{i-1}({\rm \lk}_{\Delta}(F); k)=0$ whenever $i + |F| \leq \dim(\Delta)$ and 
$0\leq i < \ell$.
\end{enumerate}
\end{theorem}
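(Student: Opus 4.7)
The plan is to derive all three parts from Hochster's formula for the local cohomology of a Stanley--Reisner ring, together with the standard dictionary that translates depth and Serre's condition into vanishing of local cohomology.

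First I would recall Hochster's formula: for the graded maximal ideal $\m$ of $R$, the module $H^i_\m(k[\Delta])$ is $\Z^n$-graded, its nonzero pieces are concentrated in multidegrees $\a\in \Z^n$ whose negative support $F_\a:=\{j \mid a_j<0\}$ is a face of $\Delta$ (and $\a$ has no positive components), and
\[
\dim_k H^i_\m(k[\Delta])_{\a} \;=\; \dim_k \widetilde H_{\,i-|F_\a|-1}(\lk_\Delta(F_\a);\,k).
\]
Combined with the Grothendieck vanishing/non-vanishing criterion $\depth(k[\Delta])=\min\{i \mid H^i_\m(k[\Delta])\neq 0\}$, this immediately produces combinatorial criteria for depth.

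For part $(b)$, the condition $\depth(k[\Delta])\geq t$ becomes $H^i_\m(k[\Delta])=0$ for all $i<t$. Hochster's formula shows this is equivalent to $\widetilde H_{i-|F|-1}(\lk_\Delta(F);k)=0$ whenever $F\in\Delta$ and $i<t$. Re-indexing with $j:=i-|F|$ converts this into exactly the statement in $(b)$. Part $(a)$ then drops out as the case $t=d$: setting $j=i-|F|$, the inequality $i<d$ becomes $j<d-|F|=\dim(\lk_\Delta(F))+1$, i.e.\ $j\leq \dim(\lk_\Delta(F))$, which matches $(a)$ after one further reindexing of the homological degree.

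For part $(c)$, I would use the standard fact that Serre's $(S_\ell)$ can be checked on $\Z^n$-graded primes of $k[\Delta]$, i.e.\ on the face primes $\p_F$ for $F\in\Delta$. The localization $(k[\Delta])_{\p_F}$ is, up to inverting the variables indexed by $F$, the Stanley--Reisner ring of $\lk_\Delta(F)$; in particular
\[
\depth\bigl((k[\Delta])_{\p_F}\bigr)\;=\;|F|+\depth(k[\lk_\Delta(F)]),\qquad \dim\bigl((k[\Delta])_{\p_F}\bigr)\;=\;|F|+\dim(\lk_\Delta(F))+1.
\]
So $(S_\ell)$ amounts to: for every $F\in\Delta$, $\depth(k[\lk_\Delta(F)])\geq \min\{\ell-|F|,\,\dim(\lk_\Delta(F))+1\}$. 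Applying $(b)$ to each link (and using $\lk_{\lk_\Delta(F)}(G)=\lk_\Delta(F\cup G)$, so all relevant faces range over $\Delta$) converts these depth inequalities into vanishing $\widetilde H_{i-1}(\lk_\Delta(F);k)=0$ for $i+|F|\leq \dim(\Delta)$ and $0\leq i<\ell$, which is exactly $(c)$.

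The main obstacle I expect is the bookkeeping in $(c)$: one has to reconcile the depth inequality $\min\{\ell-|F|,\dim(\lk_\Delta(F))+1\}$ imposed at each face with the uniform cutoff $i+|F|\leq \dim(\Delta)$ in the statement. This works cleanly because $(S_\ell)$ with $\ell\geq 2$ forces $\Delta$ to be pure (so $\dim(\lk_\Delta(F))+|F|+1=\dim(\Delta)+1$ for every $F$ in a facet chain), and the full inequality $i+|F|\leq \dim(\Delta)$ is the tightest constraint once purity is in place. Once this reconciliation is made, the proof of $(c)$ is essentially a rewriting of $(b)$ applied to every link.
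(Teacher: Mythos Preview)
The paper does not give its own proof of this theorem: it is stated with references to Reisner \cite{Re76} for (a) and to \cite{Te07}, \cite{HL21} for (b) and (c). Your plan via Hochster's formula and localization at face primes is the standard route those references take, and parts (a) and (b) of your sketch are fine.

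There is, however, a genuine error in your treatment of (c). The formulas
\[
\depth\bigl((k[\Delta])_{\p_F}\bigr)=|F|+\depth(k[\lk_\Delta(F)]),\qquad
\dim\bigl((k[\Delta])_{\p_F}\bigr)=|F|+\dim(\lk_\Delta(F))+1
\]
are both off by an additive $|F|$: in fact $\dim((k[\Delta])_{\p_F})=\dim(\lk_\Delta(F))+1$ and $\depth((k[\Delta])_{\p_F})=\depth(k[\lk_\Delta(F)])$. (Try $\Delta=\langle 12,13\rangle$ and $F=\{1\}$: the localization of $k[x_1,x_2,x_3]/(x_2x_3)$ at $(x_2,x_3)$ is one-dimensional, not two-dimensional.) This is not a harmless shift: with your formulas the $(S_\ell)$ inequality becomes $\depth(k[\lk_\Delta(F)])\ge\min\{\ell-|F|,\,\dim(\lk_\Delta(F))+1\}$, and feeding that into (b) produces the vanishing range $i+|F|<\ell$ rather than the correct $i<\ell$ that appears in (c). With the correct formulas the argument does go through: $(S_\ell)$ reads $\depth(k[\lk_\Delta(F)])\ge\min\{\ell,\,\dim(\lk_\Delta(F))+1\}$ for each $F$; applying (b) (and noting that the conditions for pairs $(F,G)$ collapse to those with $G=\emptyset$ since $\lk_{\lk_\Delta(F)}(G)=\lk_\Delta(F\cup G)$) gives $\widetilde H_{i-1}(\lk_\Delta(F))=0$ for $i<\ell$ and $i\le\dim(\lk_\Delta(F))$; and purity, which as you note is forced by $(S_2)$, converts the latter bound into $i+|F|\le\dim(\Delta)$.
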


Similarly, a quick application of a variant of Hochster's formula (\cite[Theorem 5.5.1]{BH93}) gives the following characterization of the level property:

\begin{theorem}\label{char-level}
$\Delta$ is level if and only if it is Cohen-Macaulay and $|V|=|W|$ for any $V,W \subseteq [n]$ with $\tilde{H}_{|V|-c-1}(\Delta|_V) \ne 0$ and $\tilde{H}_{|W|-c-1}(\Delta|_W) \ne 0$.     
\end{theorem}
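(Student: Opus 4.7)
The plan is to reduce the statement to a direct application of Hochster's formula for graded Betti numbers of squarefree monomial ideals, combined with the characterization of the level property via the minimal free resolution.

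First, I would recall that when $\Delta$ is Cohen--Macaulay, the Auslander--Buchsbaum formula gives $\pd_R(R/I_\Delta) = n - \depth(R/I_\Delta) = n - d = c$. Hence the last module in the minimal graded free resolution of $R/I_\Delta$ over $R$ is $\bigoplus_j R(-j)^{\beta_{c,j}(R/I_\Delta)}$, and by the very definition of the level property (as stated in the paragraph defining level earlier in the excerpt), $\Delta$ is level if and only if it is Cohen--Macaulay and there is exactly one value of $j$ for which $\beta_{c,j}(R/I_\Delta) \ne 0$.

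Next, I would invoke Hochster's formula, which for any squarefree monomial ideal gives
\[
\beta_{c,j}(R/I_\Delta) \;=\; \sum_{W \subseteq [n],\, |W|=j} \dim_k \tilde{H}_{j-c-1}(\Delta|_W;k).
\]
Since every summand on the right is a non-negative integer, $\beta_{c,j}(R/I_\Delta)$ is nonzero if and only if there exists some $W \subseteq [n]$ with $|W| = j$ and $\tilde{H}_{|W|-c-1}(\Delta|_W;k) \ne 0$. Consequently, the set of degrees of generators of the last free module in the resolution is precisely
\[
D(\Delta) \;:=\; \bigl\{\, |W| \,:\, W \subseteq [n],\, \tilde{H}_{|W|-c-1}(\Delta|_W;k) \ne 0 \,\bigr\}.
\]

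Finally, assembling the two steps: $\Delta$ is level iff it is Cohen--Macaulay and $|D(\Delta)| \le 1$, which is exactly the condition that $|V| = |W|$ for any $V,W \subseteq [n]$ with $\tilde{H}_{|V|-c-1}(\Delta|_V) \ne 0$ and $\tilde{H}_{|W|-c-1}(\Delta|_W) \ne 0$. There is no real obstacle here: the only subtlety is being careful with grading/indexing conventions in Hochster's formula (whether one uses $R/I_\Delta$ or $I_\Delta$, and the shift by $-1$ vs. $-2$ in the homological index), and noting that the Cohen--Macaulayness assumption is what pins the projective dimension down to $c$ so that the "last" Betti numbers are exactly $\beta_{c,-}$; for non-CM complexes the analogous statement would fail because $\pd(R/I_\Delta)$ could exceed $c$.
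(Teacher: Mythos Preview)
Your proof is correct and is precisely the argument the paper has in mind: the paper does not spell out a proof but simply states that the result is ``a quick application of a variant of Hochster's formula \cite[Theorem 5.5.1]{BH93},'' which is exactly what you have carried out, combining Hochster's formula for $\beta_{c,j}(R/I_\Delta)$ with the Auslander--Buchsbaum identification $\pd_R(R/I_\Delta)=c$ in the Cohen--Macaulay case.
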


We now briefly recall the notions of ordinary and symbolic powers of an ideal. Symbolic powers can be defined in greater generality, but we record here only the definition needed in our setting.
\begin{defn}\label{Def-Symb}
 (1) If $I=(u_1,\ldots,u_r)$ is any ideal, the {\em $m$-th (ordinary) power} of $I$ is the ideal $I^m$ generated by the products of $m$ of the $u_i$'s, i.e. $I^m=(u_{i_1}\cdots u_{i_m}\,\mid\, i_j\in [r]\text{ for all } j)$.  \\
 (2) For any $m\in \ZZ_+$, the {\em $m$-th symbolic power} of a Stanley--Reisner ideal $I_{\Delta}$ is the ideal $$I_{\Delta}^{(m)}=\bigcap_{F\in \mathcal{F}(\Delta)}\left({\p_F}^m\right).$$
\end{defn}
It is immediately seen that $I_{\Delta}^{m}\subseteq I_{\Delta}^{(m)}$, and, in general, the inclusion is strict. E.g. if $\F(\Delta)=\{1, 2, 3\}$, then $I_{\Delta}=(x_1x_2, x_1x_3, x_2x_3)$, and $I_\Delta^2=(x_1^2x_2^2, x_1^2x_2x_3, x_1^2x_3^2, x_1x_2^2x_3, x_1x_2x_3^2, x_2^2x_3^2)\subsetneq I_\Delta^{(2)} = (x_1x_2x_3, x_1^2x_2^2, x_1^2x_3^2, x_2^2x_3^2)$.

\subsection{Matroids}

\begin{Remark}
In some of the proofs we will show that a given simplcial complex is matroidal. In this situation, following a number of authors in the literature, e.g. \cite{MT11}, \cite{Va11}, \cite{TT12}, \cite{MT17}, \cite{MTT19}, it seems more efficient to identify a matroid with its independence complex so we can consider matroids as a sub-class of all simplicial complexes. 

On the other hand, statements about matroids will be written, naturally, using the matroidal language, e.g. we will use the word ``circuit" instead of ``minimal non--face"; or ``coloop" for ``cone point". 
\end{Remark}

One can define matroids via the following exchange conditions.
\begin{defn}\label{matroid} 
	A \textit{matroid} is a simplicial complex $\Delta$ satisfying any of the following conditions:
\begin{enumerate}
	\item[$(1)$] For any $F,G \in \F(\Delta)$, and any $x \in G-F$, there is $y \in F-G$, so that $(G - \{x\}) \cup \{y\} \in \F(\Delta)$.
 
    \item[$(2)$] For any $F,G \in \F(\Delta)$, and any $x \in G-F$, there is $y \in F-G$, so that $(G - \{x\}) \cup \{y\}$ and $(F - \{y\}) \cup \{x\}$ are both in $\F(\Delta)$.
	\end{enumerate}
When these conditions are satisfied, the facets of $\Delta$ are also called the {\bf bases} of the matroid $\Delta$. The {\em rank} $r(\Delta)$ of $\Delta$ is the size of any of its bases. 
\end{defn}
For basic properties and language of matroids we refer the reader to \cite{Ox92}. 
A first example of a matroid is the {\em uniform matroid} of rank $c$ on $[n]$, whose bases are all the subsets of $[n]$ of size $c$.

It is easily seen that a simplicial complex $\Delta$ is a matroid if and only if $\Delta^*$ is a matroid, so we will frequently work in practice with $\Delta^*$, which is called the {\em dual matroid} of $\Delta$. 

Because of the identification of $\mathcal{F}(\Delta^*)$ and $\Ass_R(k[\Delta])$, we will frequently think of the above exchange conditions as occurring directly on the associated primes of $\Delta$. To this end,  for a prime ideal $\p_F=(x_1,\dots,x_c)$ generated by variables, we will often write $\p_F-(x_j)$ to denote the ideal generated by the $x_i$ with $i\neq j$, i.e. 
$$\p_F-(x_j):=\left( x_i \,\mid\, i=1,\ldots,c,\;i \ne j\right).$$

\subsection{Vertex decomposable simplicial complexes}

We now recall two definitions. The first one, vertex decomposability, is a well--studied combinatorial notion; the second one is a weakening of the notion of a matroid, due to Kokubo and Hibi.
\begin{defn}\label{weakpoly}
	A simplicial complex $\Delta$ is 
	\begin{itemize}
		\item[$($a$)$]  {\em vertex decomposable} if either $\Delta$ is a simplex, or $\Delta$ contains a vertex $v$ such that
		\begin{enumerate}
			\item ${\rm del}_\Delta (v)$ and ${\rm lk}_\Delta (v)$ are both vertex decomposable, and
			\item every facet of  ${\rm del}_\Delta (v)$ is a facet of $\Delta$.
		\end{enumerate}
	\item[$($b$)$]  \emph{weakly polymatroidal} if there is an ordering of the vertices of $\Delta$ so that, for any $F=\{a_1,\dots,a_c\}$ and $G=\{b_1,\dots,b_c\}$ in $\mathcal{F}(\Delta^*)$ with $a_1=b_1, \dots, a_{q-1}=b_{q-1}$ and $a_q<b_q$, there is a $p\geq q$ so that $G-\{b_p\} \cup \{a_q\} \in \mathcal{F}(\Delta^*)$. This condition is equivalent to $\left(I_{\Delta}\right)^{\vee}$ being a weakly polymatroidal ideal in the sense of \cite{KH06}. 
	\end{itemize}
\end{defn}

Let $\Gamma$ be a simplicial complex. It is well--known that $\Gamma$ is matroid $\Lra$ $\Gamma^{\vee}$ is weakly polymatroidal $\Lra$ $\Gamma$ is vertex decomposable $\Lra$ $\Gamma$ is shellable $\Lra$ $\Gamma$ is Cohen--Macaulay over any field.

As an example, the simplicial complex $\Delta$ on $[6]$ with $\F(\Delta^{\vee})=\{123, 125, 126, 146, 234, 245, 456 \}$ is not a matroid, but $\Delta^{\vee}$ is weakly polymatroidal.

\subsection{Polarizations}
\begin{defn}
	Suppose $M=\prod^n_{i=1} x_i^{b_i}$ is a monomial in $R$. The \textit{(standard) polarization} of $M$ is the monomial $M^{\pol}:=\prod^n_{i=1} \prod^{a_i}_{j=1} x_{i,j}$ inside the polynomial ring $k[x_{i,j} \mid 1 \le i \le n, 1 \le j \le a_i]$. If $J=(M_1,\ldots,M_r)$ is a monomial ideal in $R$, then its \textit{(standard) polarization} is the ideal $J^{\pol}:=(M_1^{\pol},\dots,M_r^{\pol})$ inside the polynomial ring $T:=k[x_{i,j} \mid x_{i,j} \in \supp\,M_h^{\pol} \mbox{ for some } h]$. 
\end{defn}

As shown in \cite{Fr88} (see also \cite[Lemma~4.2.16]{BH93} or \cite[Thm~21.10]{Pe11}), there is a tight relation between $I^{\pol}$ and $I$ (in fact, $I^{\pol}\subseteq T$ is a so--called {\em deformation} of $I\subseteq R$), and so $I^{\pol}$ captures a great deal of information about $I$ itself. We note the information relevant to our needs below. 

\begin{prop}\label{polcm}
Let $I$ be a monomial ideal. 
\begin{enumerate}
\item[$(1)$] $I$ is a complete intersection $($of grade $c$$)$ if and only if $I^{\pol}$ is. 
\item[$(2)$] $R/I$ is Cohen-Macaulay if and only if $T/I^{\pol}$ is Cohen-Macaulay.
\item[$(3)$] If $R/I$ satisfies $(S_{\ell})$, then so does $T/I^{\pol}$.
\item[$(4)$] $R/I$ is level if and only if $T/I^{\pol}$ is level.
\end{enumerate}
\end{prop}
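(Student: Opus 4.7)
The plan is to exploit the well-known fact that polarization is a \emph{deformation}: if $I \subseteq R$ is a monomial ideal and $I^{\pol} \subseteq T$ its polarization, then the linear forms $\ell_{i,j} := x_{i,j} - x_{i,1}$ (for $j \geq 2$, running over all auxiliary variables introduced in the polarization) form a regular sequence on $T/I^{\pol}$, and
$$T/(I^{\pol} + (\ell_{i,j})) \;\cong\; R/I.$$
In particular, if we let $s$ denote the number of these auxiliary linear forms, then $\dim(T/I^{\pol}) = \dim(R/I) + s$ and $\depth(T/I^{\pol}) = \depth(R/I) + s$. These facts are classical; see \cite{Fr88} or \cite[Lem.~4.2.16]{BH93}.

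For part (1), I would first recall that a monomial ideal is a complete intersection of grade $c$ if and only if it has exactly $c$ minimal monomial generators with pairwise disjoint supports. The combinatorial operation of polarization clearly preserves both the number of generators and the pairwise disjoint support property (generators with disjoint supports in $R$ polarize to generators involving disjoint sets of new variables in $T$, and conversely the supports of polarized generators project back onto the original supports). Hence $I$ is a complete intersection of grade $c$ if and only if $I^{\pol}$ is. For part (2), the depth--dimension comparison displayed above yields that $T/I^{\pol}$ is Cohen--Macaulay iff $R/I$ is, since both $\dim$ and $\depth$ drop by exactly $s$ when one quotients by the regular sequence $(\ell_{i,j})$.

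For part (3), I would use that Serre's condition $(S_\ell)$ ascends under flat extensions and descends along regular sequences of the right type; the standard argument is to localize $T/I^{\pol}$ at a prime $\mathfrak{q}$ containing $I^{\pol}$, observe that the fiber of the induced flat map $(R/I)_{\mathfrak{p}} \to (T/I^{\pol})_{\mathfrak{q}}$ (where $\mathfrak{p}$ is the contraction) is a regular local ring (since the linear forms $\ell_{i,j}$ restrict to a regular system of parameters on the fiber), and then invoke the fact that $(S_\ell)$ ascends under flat local maps with Cohen--Macaulay (in fact, regular) fibers. This gives the one-way implication stated in (3). For part (4), I would combine part (2) with the classical fact that the graded Betti numbers of $R/I$ and $T/I^{\pol}$ agree (see \cite[Thm.~21.10]{Pe11}); in particular $\beta_{p,p+j}(R/I) = \beta_{p,p+j}(T/I^{\pol})$ where $p$ is the projective dimension. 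Since levelness is equivalent to Cohen--Macaulayness together with the vanishing of $\beta_{p,p+j}$ for all but one value of $j$, the characterization descends from $T/I^{\pol}$ to $R/I$ and vice versa.

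The main obstacle is part (3), since $(S_\ell)$ is a local condition and one must be careful in relating localizations of $T/I^{\pol}$ at arbitrary primes to localizations of $R/I$; the flatness of the deformation combined with the regular (hence Cohen--Macaulay) nature of the fibers is what makes the ascent work. Parts (1), (2), and (4) are essentially immediate from the standard facts about polarization cited above.
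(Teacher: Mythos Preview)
Your treatment of parts (1), (2), and (4) is correct and aligns with the paper, which for these three parts simply appeals to \cite[Thm~21.10]{Pe11} (polarization preserves the graded Betti numbers, hence projective dimension, depth, and the shape of the last syzygy module).

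The gap is in part (3). There is in general no ring homomorphism $R/I \to T/I^{\pol}$, so the ``induced flat map $(R/I)_{\mathfrak p} \to (T/I^{\pol})_{\mathfrak q}$'' you invoke does not exist. For a concrete obstruction, take $I=(x^2)\subseteq R=k[x]$ and $I^{\pol}=(x_{1}x_{2})\subseteq T=k[x_{1},x_{2}]$: under the natural inclusion $R\hookrightarrow T$ sending $x\mapsto x_1$, the element $x^2$ maps to $x_1^{2}\notin I^{\pol}$; indeed $x^{2}=0$ in $R/I$ while $x_1^{2}\ne 0$ in $T/I^{\pol}$, so no $k$-algebra map $R/I\to T/I^{\pol}$ with $x\mapsto x_1$ can exist. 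What \emph{is} available is that $T/I^{\pol}$ is free over the polynomial subring $k[\ell_{i,j}]$ (the $\ell_{i,j}$ form a homogeneous regular sequence on a finitely generated graded module, so Auslander--Buchsbaum gives projective dimension zero over that subring), with closed fibre $R/I$; from there one still needs a genuine argument that $(S_\ell)$ for the closed fibre of such a flat graded family over a regular base propagates to the total space. The paper does not carry this out either---it simply cites \cite[Proof of Theorem~4.1]{MT09} for (3). You were right to flag (3) as the delicate step, but the specific flat map you rely on is not there.
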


Parts (1), (2) and (4) follow, for instance, from \cite[Thm~21.10(2)]{Pe11}. Part (3) follows, for instance, from \cite[Proof of Theorem 4.1]{MT09}.\medskip

Polarization allows one to study an arbitrary monomial ideal through the lens of simplicial complexes, whose rich combinatorics one can, in theory, exploit. The primary objects we wish to polarize in this work are the ideals $J=I_\Delta^{(m)}$.   
\begin{Notation}
	If $\Delta$ is any simplicial complex on $[n]$, we write $\Delta^{(m)}$ for the Stanley-Reisner complex $\Delta([I_{\Delta}^{(m)}]^{\pol})$ on $[n]\times [m]$ associated to $[I_{\Delta}^{(m)}]^{\pol}$.
\end{Notation}
Clearly, $\Delta^{(m)}$ can be viewed as a family of combinatorial invariants of $\Delta$.   As the simplicial complexes $\Delta^{(m)}$ are quite large, we study the structure of the much smaller simplicial complex $(\Delta^{(m)})^*$. In general, given a monomial ideal $J$, a first question to understand $(\Delta(J^{\pol}))^*$ would be: how do we identify its facets? 
Since they can be identified with the primes in $\Ass_T(k\left[(\Delta(J^{\pol}))\right])$, the answer is known when $J=I_{\Delta}^{(m)}$, see part (2) of the following result.
\begin{prop}[{\cite[Prop.~2.5]{Fa06}}]\label{polprimes}
\	 
\begin{enumerate}

\item[$(1)$] If $I=(x_{i_1}^{a_1},\dots,x_{i_r}^{a_r})$ then $\Ass_T(T/I^{\pol})=\{(x_{i_1,c_1},\dots,x_{i_r,c_r})\, \mid\, 1\leq c_i \le a_i\  \forall i\}$
 
\item [$(2)$] ${\rm Ass}_Tk\left[\Delta^{(m)}\right]$ consists  of the primes $\p:=(x_{i_1,a_1},x_{i_2,a_2},\dots,x_{i_c,a_c})$ such that
\begin{itemize}
	\item the prime $\ovl{\p}:=(x_{i_1},\dots,x_{i_c})$ is in $\Ass_R{k[\Delta]}$, 
	\item $1 \le a_i \le m$ for every $i$, and
	\item $\ssi{\p}:=\sum_{i=1}^c a_i$ is $\le c+m-1$. (see also Definition \ref{J_m}.)
\end{itemize} 
	\noindent Equivalently, $((i_1,a_1),\ldots,(i_c,a_c))$ is a facet of $[\Delta^{(m)}]^*$ if and only if $(i_1,\ldots,i_c)\in \Delta^*$, $1\leq a_i\le m$ for all $i$, and $\sum a_i \leq c+m-1$.	
\end{enumerate}
 
\end{prop}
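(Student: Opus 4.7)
The plan is to handle Part (1) directly and use it as a building block for Part (2) via an irreducible-decomposition argument. For Part (1), the polarization of $I = (x_{i_1}^{a_1},\ldots,x_{i_r}^{a_r})$ is the squarefree monomial ideal $I^{\pol} = (x_{i_1,1}\cdots x_{i_1,a_1},\ldots,x_{i_r,1}\cdots x_{i_r,a_r})$, whose generators have pairwise disjoint supports. Any squarefree monomial ideal is radical, so $\Ass_T(T/I^{\pol})$ coincides with the set of minimal primes; and the minimal primes of such a ``disjoint-support'' ideal are obtained by choosing exactly one variable from each generator, yielding the list $(x_{i_1,c_1},\ldots,x_{i_r,c_r})$ with $1\le c_j\le a_j$.

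For Part (2), I would start from the irredundant irreducible decomposition
\[
I_\Delta^{(m)} \,=\, \bigcap_{F \in \F(\Delta)} \p_F^m \,=\, \bigcap_{F \in \F(\Delta)} \bigcap_{b} \bigl(x_j^{b_j} \,\mid\, j \in F^*\bigr),
\]
where the inner intersection runs over tuples $(b_j)_{j \in F^*} \in \mathbb{Z}_{>0}^{|F^*|}$ with $\sum b_j = |F^*| + m - 1$ (the standard irreducible decomposition of $\p_F^m$). The next ingredient is the elementary fact that polarization commutes with intersection of monomial ideals when all pieces are polarized inside a common ambient polynomial ring $T$; this reduces to the variable-by-variable identity $\operatorname{lcm}(x^a,x^b)^{\pol} = \operatorname{lcm}(x_1\cdots x_a,\,x_1\cdots x_b)$, applied to pairs of generators. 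Combining these two facts gives $(I_\Delta^{(m)})^{\pol} = \bigcap_{F,b}\,(x_{j,1}\cdots x_{j,b_j} \,\mid\, j \in F^*)$ inside $T$.

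Applying Part (1) to each irreducible factor, the minimal primes of $(x_{j,1}\cdots x_{j,b_j} \mid j \in F^*)$ are exactly the primes $(x_{j,c_j} \mid j \in F^*)$ with $1 \le c_j \le b_j$. Letting $(b_j)$ range over positive tuples with $\sum b_j = |F^*|+m-1$, a direct counting argument shows that the achievable $(c_j)$ are exactly those with $c_j \ge 1$ and $\sum c_j \le |F^*|+m-1$ (the bound $c_j \le m$ then follows automatically from $c_j \le \sum c_i - (|F^*|-1)$). Finally, I would observe that primes arising from distinct facets $F \ne G$ are pairwise incomparable, since distinct facets of $\Delta$ have pairwise incomparable complements; hence every candidate prime in the list is already minimal. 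Since $(I_\Delta^{(m)})^{\pol}$ is squarefree, its associated and minimal primes coincide, which yields exactly the description in the statement.

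The main obstacle is the commutativity of polarization with intersection: although elementary, it must be performed in a fixed common ambient $T$ so that no spurious components appear, and the translation between the exponents $(b_j)$ (whose sum equals $c+m-1$) and the chosen indices $(c_j)$ (whose sum is bounded by $c+m-1$) is the bookkeeping that produces the clean inequality $\ssi{\p}\le c+m-1$ appearing in the statement.
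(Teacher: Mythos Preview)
Your argument is correct. Note, however, that the paper does not give its own proof of this proposition: it is stated with attribution to \cite[Prop.~2.5]{Fa06} and used as a black box throughout. So there is no ``paper's proof'' to compare against; you have supplied a self-contained justification where the authors chose to cite one.

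That said, your route---irreducibly decomposing each $\p_F^m$, invoking that polarization commutes with finite intersections of monomial ideals (via the identity $\operatorname{lcm}(u,v)^{\pol}=\operatorname{lcm}(u^{\pol},v^{\pol})$ in a common ambient $T$), applying Part~(1) to each factor, and then checking irredundancy---is exactly the standard way to establish this and matches the spirit of Faridi's original argument. The bookkeeping is clean: the constraint $b_k\ge 1$, $\sum b_k=c+m-1$ on the irreducible components forces $b_k\le m$, so every polarized factor lives in $T=k[x_{i,j}\mid 1\le j\le m]$; and your observation that $c_j\le m$ is automatic from $\sum c_i\le c+m-1$ with $c_i\ge 1$ is correct. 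The irredundancy check is also fine: primes coming from the same $F$ are generated by $c$ variables with the same first-index set but different second-index tuples, hence incomparable, while primes from distinct facets $F\ne G$ have distinct first-index sets of the same size $c$, hence are again incomparable.
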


\begin{example}
Let $\Delta=\langle 12, 23, 34 \rangle$, then $\Delta^*=\langle 12, 14, 34 \rangle$ and $\Ass_R(k[\Delta])=\{(x_1,x_2), (x_1,x_4), (x_3,x_4)\}$. 
Then
$$\begin{array}{ll}
	[\Delta^{(2)}]^*= &\langle ((1,1), (2,1)),\; ((1,2), (2,1)),\; ((1,1), (2,2)),\; ((1,1), (4,1)),\;((1,2), (4,1)),\;((1,1), (4,2)),\;\\
	& ((3,1), (4,1)),\;((3,2), (4,1)),\;((3,1), (4,2))\rangle ,\\
	\end{array}$$ or equivalently, 
$$\begin{array}{ll}
\Ass_T(k[\Delta^{(2)}])= &\left\{ (x_{11}, x_{21}),\; (x_{12}, x_{21}),\; (x_{11}, x_{22}),\; (x_{11}, x_{41}),\;(x_{12}, x_{41}),\; (x_{11}, x_{42}),\right.\\
& \left. (x_{31}, x_{41}),\;(x_{32}, x_{41}),\; (x_{31}, x_{42})\right\}.
	\end{array}$$
If we set $F=\{(1,2), (4,1)\}$ and $\p:=(x_{12}, x_{41})$, then $\bar{F}=\{1,4\}$,  $\bar{\p}=(x_1,x_4)$, and $\ssi{\p}=2+1=3$.
\end{example}

We record the following observation for future uses.

\begin{remark}\label{variables}
	The polarization of an ideal $J$ only affects the variables in the support of at least one minimal monomial generator $J$. So if a variable $z\in S$ does not appear in any minimal generator of $J$, then $z_1$ is the only variable associated to $z$ in the polarization, and it is a cone point of the simplicial complex associated to $J^{\pol}$.
	
	Since these variables/cone points do not affect the combinatorial and algebraic properties under consideration in this paper (being matroid, shellable, pure, Cohen--Macaulay, complete intersection, etc.),  in our arguments we will assume there are no cone points or, equivalently, that every variable divides at least one minimal monomial generator of $I_{\Delta}$.
	
In a similar spirit, in most statements we may assume there are no vertices $v$ in $[n]$ with $v\notin \Delta$, because those statements are true if and only if they are true for $\Delta|_{[n]-\{v\}}$.

\end{remark}

\subsection{Hochster--Huneke graphs}
We conclude this section with the definition of the Hochster--Huneke graph which is a graph associated to any Noetherian ring $S$. Historically, a more general version of this graph was first introduced by Hartshorne in \cite{Ha62} to establish connectedness results for locally Noetherian preschemes; see for instance \cite[Cor.~2.4]{Ha62}. Hochster and Huneke, in generalizing results of Faltings,  proved that, if $S$ is a complete equidimensional Noetherian local ring, then the connectedness of its Hochster--Huneke graph is equivalent to a number of very different--looking and homologically relevant properties of $S$ (e.g. the indecomposability of its canonical module) \cite[Thm~3.6]{HH94}. The name ``Hochster--Huneke graph" was first used in \cite{Zh07}. In the literature this graph may also be referred to as the ``dual graph".

In our context, we will need the labelled version of this graph employed, for instance, in \cite{Ho18}.

\begin{defn}
	Let $\Delta$ be a pure simplicial complex. The (labelled) {\em Hochster-Huneke graph} (or 	{\em dual graph}) of $k[\Delta]$ is the labelled graph $\mathbf{\mathcal{G}}(k[\Delta])$ with 
\begin{itemize}
	\item vertex set $\{v_\p\,\mid\,\p\in \Ass(k[\Delta])\}$, so in bijection with  $\Ass(k[\Delta])$,
	\item labeling given by $v_\p:=\{x_i\,\mid\, x_i \notin \p\}$ (so $v_\p$ is labeled by the variables {\em not} in $\p$), and
	\item the following edges: $\{v_\p, v_\q\}$ is an edge if and only if $|v_\p\cap v_\q|=|v_\p|-1$.
\end{itemize}
\end{defn}

\begin{remark}
One can check that $\mathbf{\mathcal{G}}(k[\Delta])$ is a relabeling of the facet-ridge graph of $\Delta^*$ or, for matroids, of the {\em matroid basis graph} of $\Delta^*$. (see e.g. \cite[Def.~1.2]{Ma73}.)
\end{remark}

We will then need a more sophisticated notion of ``connectedness" for labelled Hochster--Huneke graphs. It was introduced in \cite{Ho18}, inspired by work of several authors studying diameters of polyhedra and abstractions of polytopes.

\begin{defn}
The graph $\mathbf{\mathcal{G}}(k[\Delta])$ is {\em locally connected} if, for any two vertices $v_\p$, $v_\q$ of $\mathbf{\mathcal{G}}(k[\Delta])$,  there is a {\em locally connected path} between $v_\p$ and $v_\q$, i.e. a path $v_\p, v_{\p_1},\ldots, v_{\p_{r-1}}, v_\q$ in $\mathbf{\mathcal{G}}(k[\Delta])$ where each $v_{\p_i}$ in the path contains the set $v_\p \cap v_\q$.

If there is a locally connected path between $v_\p$ and $v_\q$, we write $=a(\p, \q)$ for the length of the shortest locally connected path connecting them.  
\end{defn}

\begin{remark}
From an algebraic perspective,
 \begin{itemize}
	\item $\{v_\p,v_\q\}$ is an edge of $\mathbf{\mathcal{G}}(I_\Delta$) if and only if ${\rm grade}(\p+\q)=1$ in $k[\Delta]$;
	\item $v_\p=v_{\p_0}, v_{\p_1},\ldots, v_{\p_{r-1}}, v_{\p_{r}}=v_\q$ is a locally connected path between $v_\p$ and $v_\q$ if and only if, for every $i$, ${\rm grade}(\p_i+\p_{i+1})=1$ and $\p_i\subseteq\p+\q$.
\end{itemize}
Also, we will often identify the vertex set of $\mathbf{\mathcal{G}}(k[\Delta])$ with $\Ass(k[\Delta])$, so we will say, for instance, that two associated primes $\p,\q$ are vertices of $\mathbf{\mathcal{G}}(k[\Delta])$,  and we will write $\p,\p_1,\ldots,\p_{r-1},\q$ for the locally connected path $v_\p, v_{\p_1},\ldots, v_{\p_{r-1}}, v_\q$.
\end{remark}

In general there can be multiple locally connected paths between two vertices. E.g. consider the complete intersection $I_{\Delta}=(ac,bd)= (a,b)\cap (b,c) \cap (c,d) \cap (a,d)$. Then $(a,b), (b,c), (c,d)$ and $(a,b), (a,d), (c,d)$ are two locally connected paths connecting the vertices $(a,b)$ and $(c,d)$. 

Our main interest in Hochster-Huneke graphs comes from the following:
\begin{theorem}[{\cite[Theorem 16]{Ho18}}]\label{lc}
	The following are equivalent:	
	\begin{enumerate}
		\item[$(1)$] $k[\Delta]$ satisfies Serre's condition $(S_2)$.			
		\item[$(2)$] $\Delta$ is pure and $\mathbf{\mathcal{G}}(k[\Delta])$ is locally connected. 
	\end{enumerate}
\end{theorem}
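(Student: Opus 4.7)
The plan is to combine the standard combinatorial criterion for Serre's condition (part (c) of the theorem stated just before Theorem \ref{char-level}) with a dimension-inductive argument that translates topological connectedness of links into combinatorial facet-ridge connectedness. Specialized to $\ell = 2$, the criterion says that for pure $\Delta$, $k[\Delta]$ satisfies $(S_2)$ if and only if $\tilde H_0(\mathrm{lk}_\Delta(F); k) = 0$ --- i.e.\ the link is topologically connected --- for every face $F$ with $\dim(\mathrm{lk}_\Delta(F)) \geq 1$. The theorem thus becomes the equivalence between this topological criterion and the graph-theoretic local connectivity of $\mathbf{\mathcal{G}}(k[\Delta])$.

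The direction $(2)\Rightarrow(1)$ is the easier one. Assuming $\Delta$ is pure and $\mathbf{\mathcal{G}}(k[\Delta])$ is locally connected, fix a face $F$ with $\dim(\mathrm{lk}_\Delta(F)) \geq 1$ and two facets $G_1, G_2 \supseteq F$. Local connectedness yields a locally connected path $G_1 = H_0, H_1, \dots, H_r = G_2$ in $\mathbf{\mathcal{G}}(k[\Delta])$ with each $H_i \supseteq G_1 \cap G_2 \supseteq F$ and $|H_i \cap H_{i+1}| = d-1$; then the sets $H_i \setminus F$ form a facet-ridge path in $\mathrm{lk}_\Delta(F)$ certifying its connectedness, so the criterion gives $(S_2)$.

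For $(1)\Rightarrow(2)$, purity is standard. For local connectivity, given facets $F, G$ of $\Delta$ with $\sigma = F \cap G$, the link $\mathrm{lk}_\Delta(\sigma)$ is pure and inherits $(S_2)$ from localizing $k[\Delta]$ at $\mathfrak{p}_\sigma$. The task then reduces to the auxiliary lemma that \emph{any pure $(S_2)$ simplicial complex has a connected facet-ridge graph}. I would prove this by induction on dimension: the cases $\dim \leq 1$ are immediate from $\tilde H_0 = 0$, while for the inductive step one starts from a topological path between two facets (guaranteed by $(S_2)$ at the empty face) and rewrites each transition through a face $\tau$ of codimension $\geq 2$ as a detour through the inductively facet-ridge-connected link $\mathrm{lk}_\Gamma(\tau)$. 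Applied to $\mathrm{lk}_\Delta(\sigma)$, the lemma produces a facet-ridge path between $F\setminus\sigma$ and $G\setminus\sigma$ that lifts to the desired locally connected path in $\mathbf{\mathcal{G}}(k[\Delta])$. The main obstacle is the rewriting step in the auxiliary lemma --- ensuring that iteratively replacing bad transitions by ridge-transitions terminates without introducing new bad transitions. This is really a simplicial avatar of Hartshorne's classical connectivity theorem linking $(S_2)$ to connectedness in codimension one, and the cleanest approach may well be to invoke that theorem directly and translate it to the facet-ridge graph setting.
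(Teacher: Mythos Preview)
The paper does not give its own proof of this statement: Theorem~\ref{lc} is quoted from \cite[Theorem~16]{Ho18} and used as a black box. So there is no ``paper's proof'' to compare against; I will simply assess your argument on its merits.

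Your outline is correct and is in fact the standard route to this result. A couple of comments that may sharpen it.

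First, the ``main obstacle'' you flag---termination of the rewriting in the auxiliary lemma---is not a genuine difficulty once the induction is organized properly. From $\tilde H_0(\Gamma)=0$ (the $(S_2)$ condition at $F=\varnothing$) you get connectedness of the $1$-skeleton, and hence a chain of facets $F=F_0,F_1,\dots,F_r=G$ with $\tau_i:=F_i\cap F_{i+1}\neq\varnothing$ for all $i$. Now handle each step independently: if $|\tau_i|=d-1$ you already have a ridge; otherwise $|\tau_i|\ge 1$ forces $\dim(\lk_\Gamma(\tau_i))<\dim\Gamma$, the link is pure $(S_2)$, and the inductive hypothesis gives a facet--ridge path in $\lk_\Gamma(\tau_i)$ from $F_i\setminus\tau_i$ to $F_{i+1}\setminus\tau_i$, which lifts (by adjoining $\tau_i$) to a ridge path in $\Gamma$ from $F_i$ to $F_{i+1}$. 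Concatenating over $i$ finishes in a single pass; no iterated rewriting is needed, and no new ``bad'' transitions are introduced.

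Second, the phrase ``inherits $(S_2)$ from localizing $k[\Delta]$ at $\mathfrak p_\sigma$'' is slightly loose, since $\mathfrak p_\sigma$ is only defined in the paper for facets $\sigma$. The clean statement you want is that inverting the variables $x_i$ with $i\in\sigma$ yields $k[\Delta][x_i^{-1}:i\in\sigma]\cong k[\lk_\Delta(\sigma)][x_i^{\pm 1}:i\in\sigma]$, so $(S_2)$ descends to $k[\lk_\Delta(\sigma)]$; alternatively, just observe that every link in $\lk_\Delta(\sigma)$ is already a link in $\Delta$, so the combinatorial $(S_2)$ criterion transfers directly. Either way the claim is standard and your use of it is fine.
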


We illustrate the utility of Theorem \ref{lc} with the following preliminary result which will be used in the later sections. 

\begin{prop}\label{s2prelim}
Suppose $R/I_{\Delta}^{(m)}$ satisfies $(S_2)$ for some $m \ge 1$. Then $\Delta$ satisfies $(S_2)$. 
\end{prop}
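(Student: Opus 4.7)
The plan is to pass through polarization and the Hochster--Huneke graph. By Proposition~\ref{polcm}(3), $(S_2)$ for $R/I_\Delta^{(m)}$ descends to $(S_2)$ for $T/(I_\Delta^{(m)})^{\pol}$, so Theorem~\ref{lc} yields that $\Delta^{(m)}$ is pure and $\mathcal{G}(k[\Delta^{(m)}])$ is locally connected. The goal is to transfer both properties back to $\Delta$ via the natural projection $\pi\colon \Ass(k[\Delta^{(m)}]) \to \Ass(k[\Delta])$ sending $(x_{i_1,a_1},\dots,x_{i_c,a_c})$ to $(x_{i_1},\dots,x_{i_c})$. Purity is immediate from Proposition~\ref{polprimes}(2): each facet $F$ of $\Delta$ lifts to the prime $(x_{i,1}\mid i\in F^*)$ in $\Ass(k[\Delta^{(m)}])$ of the same height $|F^*|$, so purity upstairs forces all $|F^*|$ to coincide.

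For local connectedness of $\mathcal{G}(k[\Delta])$, fix arbitrary $\p,\q \in \Ass(k[\Delta])$ and lift them to $\p',\q' \in \Ass(k[\Delta^{(m)}])$ by setting every second coordinate equal to $1$; this is valid for any $m\ge 1$ since $\ssi{\p'}=c\le c+m-1$. The crucial observation is that every vertex $(j,t)$ with $t>1$ lies in both $v_{\p'}$ and $v_{\q'}$ (since no generator of $\p'$ or $\q'$ has second coordinate $>1$), so it lies in $v_{\p'}\cap v_{\q'}$. Consequently any locally connected path $\p'=\pi_0,\pi_1,\dots,\pi_r=\q'$ must consist of primes none of whose generators have second coordinate greater than $1$---otherwise such a generator $x_{j,t}$ would give $(j,t)\in \pi_i$, i.e.\ $(j,t)\notin v_{\pi_i}$, contradicting the requirement $v_{\pi_i}\supseteq v_{\p'}\cap v_{\q'}$.

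Applying $\pi$ to this path and collapsing consecutive duplicates yields a walk in $\mathcal{G}(k[\Delta])$ from $\p$ to $\q$: consecutive $\pi_i,\pi_{i+1}$ differ in exactly one generator $x_{j,1}$, so their projections either agree or differ in exactly one generator $x_j$, hence are adjacent. Moreover, the containment $v_{\pi_i}\supseteq v_{\p'}\cap v_{\q'}$ restricts in the first coordinate to $v_{\pi(\pi_i)}\supseteq v_\p\cap v_\q$, so the projected path is locally connected. Theorem~\ref{lc} applied to $\Delta$ then delivers $(S_2)$ for $k[\Delta]$. The main subtlety is the "rigidity" observation that forces every intermediate prime of the lifted path to have all second coordinates equal to $1$; once this is in hand, the projection argument is essentially formal.
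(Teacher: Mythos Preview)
Your proof is correct and follows essentially the same approach as the paper: lift $\p,\q$ to primes with all second indices equal to $1$, use local connectedness of $\mathcal{G}(k[\Delta^{(m)}])$, and project the resulting path down via $\p_i\mapsto\bar\p_i$. Your explicit ``rigidity'' observation (that every intermediate prime in the lifted path must have all second indices equal to $1$, since $(j,t)\in v_{\p'}\cap v_{\q'}$ for $t>1$) is a nice way to make precise exactly why the projection preserves adjacency and the containment $\bar\p_i\subseteq\p+\q$; the paper leaves this step implicit when it asserts that ``the collection $\bar\p_i$ forms a locally connected path.''
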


Before proving the result, we establish a piece of notation that will be used throughout.
\begin{Notation}\label{J_m}
   Let $\Delta$ be a simplicial complex on $[n]$ and $m\in \ZZ_+$.
   \begin{itemize}
    \item We set $T=k[x_{ij}\,\mid\,1\leq i \leq n,\,1\leq j \leq m]$ and 
   $$
J_m=I_{\Delta^{(m)}} = [I_{\Delta}^{(m)}]^{\pol} \subseteq T.
   $$
  \item For any monomial prime $\p=(x_{i_1,a_1},\ldots,x_{i_c,a_c})$ in $T$, we define $\ssi{\p}$ as the {\ul{s}}um of the {\ul{s}}econd {\ul{i}}ndices of the variables in $\p$, i.e.
  $$
  \ssi{\p}=\sum_{i=1}^c a_i.
  $$
\item  We define the following total order on the variables of $T$:  
		$$
		x_{a,b}< x_{c,d}\; \Llra\;\left\{\begin{array}{l}
			b<d,\\
			\text { or }\\
			b=d \text{ and }a<c.
		\end{array}
		\right. 
		$$ 

\item We order the elements of $G((J_m)^\vee)$ in increasing order using Lex. (see also Notation \ref{order}.) Also, for any $M\in G((J_m^\vee))$, we set $C_M$ to be the colon ideal 
$$C_M:=(N\in G((J_m)^\vee)\,\mid\, N<M)\;:\;M.$$

   \end{itemize}
\end{Notation}
While the above orders on the variables of $T$ and on $G((J_m)^\vee)$ are {\em not} used in the proof of Proposition \ref{s2prelim}, they are actually crucial for the proofs of {\em all} the main results in this paper.

\begin{proof}

Suppose that $R/I_{\Delta}^{(m)}$ satisfies $(S_2)$ for some $m\geq 3$. Then $I_{\Delta}^{(m)}$ is a unmixed ideal, so $\Delta$ is pure. By Theorem \ref{lc}, we have that $\mathbf{\mathcal{G}}(J_m)$ is locally connected where $J_m\subseteq T$ is as in Notation \ref{J_m}. We prove that $\mathbf{\mathcal{G}}(k[\Delta])$ is locally connected. Indeed, let $\p,\q \in \Ass(R/I_{\Delta})$ and write $\p=(x_1,\dots,x_c)$ and $\q=(y_1,\dots,y_c)$. Proposition \ref{polprimes} yields that $\p'=(x_{1,1},x_{2,1},\dots,x_{c,1})$ and $\q'=(y_{1,1},y_{2,1},\dots,y_{c,1})$ are associated primes of $T/J_m=k[\Delta^{(m)}]$. As $\mathbf{\mathcal{G}}(J_m)$ is locally connected, there is a collection of $\p_i \in \Ass(k[\Delta^{(m)}])$ forming a locally connected path between $\p'$ and $\q'$. The collection $\bar{\p_i}$ forms a locally connected path between $\p$ and $\q$ (possibly of non--minimal length), proving $\mathbf{\mathcal{G}}(k[\Delta])$ is locally connected so that $\Delta$ satisfies $(S_2)$.
\end{proof}

The following remarks, except possibly for part (6)(b), follow immediately from the definition of locally connected path.
\begin{remark}\label{localpath}
Let $\p,\q\in \Ass(k[\Delta])$, and assume there is a locally connected path between them in $\mathbf{\mathcal{G}}:=\mathbf{\mathcal{G}}(k[\Delta])$. (E.g. if $\mathbf{\mathcal{G}}$ is locally connected.)
\begin{enumerate}
	\item If $c={\rm grade}(I_\Delta)$ in $R$ and  $r={\rm grade}(\p+\q)$ in $k[\Delta]$, then $\p\cap \q$ contains precisely $c-r$ variables;
	\item $a(\p,\q)\geq {\rm grade}(\p+\q)$;
	\item $a(\p,\q)=1$ if and only if ${\rm grade}(\p+\q)=1$;
	\item if $a(\p,\q)=2$, then ${\rm grade}(\p,\q)=2$;
	\item if $a(\p,\q)={\rm grade}(\p+\q)$ then every prime in any locally connected path of length $a(\p,\q)$ between $\p$ and $\q$ contains every variable in $\p\cap \q$;
	\item if $\p=:\p_0,\p_1,\ldots,\p_r,\q=:\p_{r+1}$ is a locally connected path, then
	\begin{enumerate}
		\item ${\rm grade}(\p+\p_i)\leq i$ in $k[\Delta]$ for every $i$;
		\item $a(\p, \p_2)\leq 2$, and, $a(\p,\p_2)=2$ $\Llra$ ${\rm grade}(\p+\p_2)=2$.
	\end{enumerate} 
\end{enumerate}
\end{remark}

\begin{proof}
(6)(b) 	By part (a) ${\rm grade}(\p+\p_2)\leq 2$. Now, the forward implication is part (4).  
For the other implication, it suffices to prove that $\p_1\subseteq \p+\p_2$, because then $\p,\p_1,\p_2$ is a locally connected path. Write $\p_1=\p-(z)+(w)$ for some variables, $z\in \p$ and $w\in \q$. If $w\in \p_2$ we are done. If $w\notin \p_2$, then $\p_2=\p_1-(w) + (u)$ for some variable $u\in \p+\q$. It follows that $\p_2=\p_1-(z)+(u)$, thus ${\rm grade}(\p+\p_2)=1$, yielding a contradiction. 
\end{proof}

Example \ref{BadExamples} illustrates some of the subtleties one has to be aware of when employing the Hochster--Huneke graph in a proof, e.g. in the proof of Theorem \ref{cmsymb}. In part (1) of the example we show that the inequality of Remark \ref{localpath}(2) may be strict if ${\rm grade}(\p+\q)\geq 2$, and the converse of (4) does not hold. 
In part (2) we show that Remark \ref{localpath}(6)(b) is true {\em only} for $\p_2$ and not, in general, for  any $\p_i$ with $i\geq 3$. In fact, even if $\p,\p_1,\ldots,\p_r,\q$ is a {\em shortest} path connecting $\p$ and $\q$, then $a(\p,\p_i)$ could be $>i$, if $i\geq 3$. For sake of simplicity, here we only show an example where $a(\p,\p_3)>3$. Similar examples can be constructed where $a(\p,\p_i)>i$ for any $i\geq 3$.

\begin{example}\label{BadExamples}
(1) Let $I_{\Delta}=(a,b,c)\cap (a,b,x)\cap (b,x,y)\cap (a,x,y)\subseteq k[a,b,c,x,y]$. Letting $\p:=(a,b,c)$ and $\q:=(a,x,y)$, then $a(\p,\q)=3>{\rm grade}(\p+\q)=2$.\\
(2) Let $I_\Delta$ be the following ideal
$$	
\begin{array}{ll}
I_\Delta = & \quad	(a,b,c,d) \cap (a,b,c,x)\cap (a,b,x,y) \cap (a,b,y,z)\cap (a,b,z,w)\\
& \cap\; (a,x,z,w) \cap (x,y,z,w)\cap (a,d,y,z) \cap (c,d,y, z) \cap (b,c,d,z).	
\end{array}
$$	
Then $\p:=(a,b,c,d)$, $\p_1:=(a,b,c,x)$, $\p_2:=(a,b,x,y)$, $\p_3:=(a,b,y,z)$, $\p_4:=(a,b,z,w)$, $\p_5:=(a,x,z,w)$, $\q:=(x,y,z,w)$ is the shortest locally connected path between $\p$ and $\q$, and $a(\p,\p_3)=4$, because $x\notin \p+\p_3$, so $\p$, $(b,c,d,z)$, $(c,d,y, z)$, $(a,d,y,z)$, $\p_3$ is the shortest path between $\p$ and $\p_3$.

\end{example}

\section{The Cohen-Macaulay Property and Serre's Condition for Symbolic Powers}\label{squarefreesection}

The first main result of this section provides a short proof of Theorem \ref{cmsymbintro} (see Theorem \ref{cmsymb} below). We then illustrate how the methods of the proof bear more fruit and new results.

\subsection{The proof of Theorem \ref{cmsymbintro}} 
\begin{theorem}\label{cmsymb}(Thm.~\ref{cmsymbintro} of the introduction)
	Let $\Delta$ be a simplicial complex. TFAE:
	\begin{enumerate}
		\item[$(1)$] $\Delta$ is a matroid.		
		\item[$(2)$] The simplicial complex $\Delta^{(m)}$ is vertex decomposable for every $m \ge 1$.		
		\item[$(3)$] $R/I_{\Delta}^{(m)}$ is Cohen-Macaulay over any field, for every $m \ge 1$.		
		\item[$(4)$] $R/I_{\Delta}^{(m)}$ satisfies $(S_2)$, for every field $k$ and every $m \ge 1$.		
		\item[$(5)$] $R/I_{\Delta}^{(m)}$ satisfies $(S_2)$, for some field $k$ and some $m \ge 3$.		
	\end{enumerate}	
\end{theorem}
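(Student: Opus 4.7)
The strategy is to establish the cycle $(1) \Rightarrow (2) \Rightarrow (3) \Rightarrow (4) \Rightarrow (5) \Rightarrow (1)$. The implications $(3) \Rightarrow (4)$ and $(4) \Rightarrow (5)$ are immediate. For $(2) \Rightarrow (3)$, one uses the standard chain vertex decomposable $\Rightarrow$ shellable $\Rightarrow$ Cohen--Macaulay for $T/J_m$, combined with Proposition~\ref{polcm}(2) to transfer the conclusion to $R/I_\Delta^{(m)}$. The two substantive directions are $(1) \Rightarrow (2)$ and $(5) \Rightarrow (1)$.

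For $(1) \Rightarrow (2)$, the plan is to use the matroid structure of $\Delta$ together with the variable order of Notation~\ref{J_m} to verify the weakly polymatroidal property of Definition~\ref{weakpoly}(b), from which the vertex decomposability of $\Delta^{(m)}$ will follow via the implications recalled immediately after that definition. Concretely, given two facets $\p, \q \in \F((\Delta^{(m)})^*)$ sorted in the variable order and agreeing through position $q-1$, with $p_q < q_q$ at the next slot, the underlying primes $\bar\p, \bar\q$ are bases of the dual matroid $\Delta^*$. The symmetric basis exchange (Definition~\ref{matroid}(2)), applied at the first index of $p_q$, produces in $\Ass(k[\Delta])$ a partner index: depending on whether the first index of $p_q$ lies in $\bar\q$ or not, this partner is either the first index itself or the $y$ produced by the exchange. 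In either case, the corresponding position $p$ inside $\q$ witnesses the required weakly polymatroidal exchange. The inequality $p \ge q$ holds because positions $1,\dots,q-1$ of $\q$ are occupied by first indices from $\bar\p$. The sum-of-second-indices bound is preserved because the sortedness forces $q_p \ge q_q > p_q$, hence the second index of $q_p$ is at least that of $p_q$.

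For $(5) \Rightarrow (1)$, Proposition~\ref{polcm}(3) together with Theorem~\ref{lc} gives that $\mathbf{\mathcal{G}}(k[\Delta^{(m)}])$ is locally connected. Arguing by contradiction, suppose $\Delta$ is not a matroid: there exist $\p, \q \in \Ass(k[\Delta])$ and $x_i \in \p - \q$ such that $(\p - (x_i)) + (x_j) \notin \Ass(k[\Delta])$ for every $x_j \in \q - \p$. Set $\p' := \{x_{k,1} \mid x_k \in \p - (x_i)\} \cup \{x_{i, m}\}$, which lies in $\Ass(k[\Delta^{(m)}])$ and has the maximal value $\ssi{\p'} = c + m - 1$. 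Choose $\q' \in \Ass(k[\Delta^{(m)}])$ with underlying prime $\bar\q$ in which every element of $\q - \p$ is lifted to a second index at least $2$; the hypothesis $m \ge 3$ is precisely what allows such a $\q'$ to still obey $\ssi{\q'} \le c + m - 1$. The plan is to show that no locally connected path between $\p'$ and $\q'$ can exist. Any candidate intermediate prime $\p_s$ whose generators lie in $\p' \cup \q'$ and which contains $x_{i, m}$ is forced by $\ssi{\p_s} \le c + m - 1$ to have its remaining $c-1$ generators at second index $1$; the only such level-$1$ variables available in $\p' \cup \q'$ lie in $\p' - \{x_{i, m}\}$ by our choice of $\q'$, so $\p_s = \p'$. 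Any candidate $\p_s$ not containing $x_{i, m}$ has underlying first indices contained in $(\p - (x_i)) \cup \bar\q$, and combining the level constraints with the failure of matroid exchange at $(\p, \q, x_i)$ rules out every option except $\p_s = \q'$. Since $\p'$ and $\q'$ differ in more than one variable, they are non-adjacent in the Hochster--Huneke graph and have no intermediate neighbour inside $\p' \cup \q'$, contradicting local connectedness.

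The main obstacle is the detailed ``no intermediate prime'' analysis in $(5) \Rightarrow (1)$, especially when $c = n - \dim(\Delta) - 1 \ge 3$, where cross-primes with underlying distinct from both $\bar\p$ and $\bar\q$ could in principle sit inside $\p' \cup \q'$; excluding them requires showing that any such cross-prime would itself induce a matroid exchange precluded by our hypothesis. The case $c = 2$ is the transparent prototype, with $\q' = (x_{j, 2}, x_{l, 2})$ when $\q = \{x_j, x_l\}$: every possible cross-underlying fails to be in $\Ass(k[\Delta])$, leaving only $\p'$ and $\q'$. The sharpness of the threshold $m \ge 3$ is apparent from this construction: it is precisely the slack in $\ssi{\cdot}$ needed to push every $\q - \p$ variable above level $1$ in $\q'$ while preserving the maximal lift $x_{i, m}$ in $\p'$, explaining combinatorially why the characterization first activates at the third symbolic power (cf.\ Remark~\ref{m=3}).
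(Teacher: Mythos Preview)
Your $(1)\Rightarrow(2)$ is essentially the paper's argument: both show the Alexander dual of $\Delta^{(m)}$ is weakly polymatroidal under the order of Notation~\ref{J_m}, splitting into the two cases according to whether the first index of the pivot variable already lies in the other underlying prime.

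Your $(5)\Rightarrow(1)$ has a genuine gap. The construction of $\q'$ requires every variable of $\q-\p$ to carry second index at least $2$, so the minimum possible $\ssi{\q'}$ is $c+|\q-\p|$; for $\q'\in\Ass(k[\Delta^{(m)}])$ you therefore need $|\q-\p|\le m-1$. But the pair $(\p,\q)$ witnessing the failure of the exchange axiom can have $|\q-\p|$ as large as $c$, while $m\ge 3$ is fixed, so your claim that ``$m\ge 3$ is precisely what allows such a $\q'$'' is false in general (it is only correct when $|\q-\p|\le 2$, e.g.\ in your $c=2$ prototype). When $\q'$ \emph{does} exist your argument is actually cleaner than you make it sound: one checks directly that $\p'$ has no neighbor at all inside $\p'+\q'$, since replacing $x_{i,m}$ by any $x_{j,b_j}\in\q'-\p'$ gives underlying prime $(\p-(x_i))+(x_j)$, excluded by hypothesis, while replacing any $x_{k,1}$ forces $\ssi{}\ge (c-2)+m+2>c+m-1$. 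So the ``cross-prime'' worry you flag as the main obstacle is not the real issue; constructing $\q'$ is. The paper handles the large-$|\q-\p|$ problem by an induction on $\grade(\p+\q)$: it first uses Proposition~\ref{s2prelim} to obtain local connectedness of the \emph{downstairs} graph $\mathbf{\mathcal{G}}(k[\Delta])$, then walks along a shortest locally connected path from $\p$ toward $\q$, reducing to the base case $\grade(\p+\q)=2$ (where $|\q-\p|=2\le m-1$), and there argues constructively---extracting the desired exchange from a path upstairs---rather than by contradiction. Without a comparable reduction step your argument is incomplete.
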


\begin{proof}
	The implications $(3) \Lra (4) \Lra (5)$ are obvious. $(2) \Lra (3)$ The  assumptions force $\Delta^{(m)}$ to be pure shellable, so it is Cohen-Macaulay over every field. Now use Proposition \ref{polcm}(2).
 
	$(5) \Lra (1)$. By Remark \ref{variables} we may assume both $\Delta$ and $\Delta^*$ have no cone points, so $d=\dim(\Delta)\leq n-2$, i.e. $c:=n-d$ is at least 2. 
	First, $\Delta$ satisfies $(S_2)$ by Proposition \ref{s2prelim}, in particular $\Delta$ is pure. 
	To prove $\Delta$ is a matroid, it suffices to show that $\Delta^*$, or equivalently $\Ass(k[\Delta])$, is a matroid. Let $\p=(x_1,\ldots,x_c)$ and $\q=(y_1,\ldots,y_c)$ be distinct primes in $\Ass(k[\Delta])$. Say $x_1\notin \q$. Without loss of generality, it suffices to show that there exists $i$ such that $\q_1:=(y_1,\dots,\hat{y}_i,\dots,y_c)+(x_1) \in \Ass(k[\Delta])$. Equivalently, we need to prove the existence of a prime $\q_1\in \Ass(k[\Delta])$ with ${\rm grade}(\q_1+\q)=1$ in $k[\Delta]$ and $x_1\in \q_1$.

Since $\Delta$ is $(S_2)$, then by Theorem \ref{lc} there is a locally connected path between $\p$ and $\q$. We prove the existence of $\q_1$ by induction on ${\rm grade}(\p+\q)\geq 1$ in $k[\Delta]$. 
	When ${\rm grade}(\p+\q)=1$ there is nothing to prove. If ${\rm grade}(\p+\q)= 2$ in $k[\Delta]$, then we can write $\p=(x_1,x_2,z_3,\dots,z_{c})$ and $\q=(y_1,y_2,z_3,\dots,z_{c})$ for distinct variables $x_1,x_2,y_1,y_2,z_1,\ldots,z_c$. 
	
Consider the primes $P:=(x_{1,1},x_{2,m},z_{3,1},\dots,z_{c,1})$ and $Q:=(y_{1,2},y_{2,m-1},z_{3,1},\dots,z_{c,1})$. Since $\ovl{P}=\p$, $\ovl{Q}=\q$, and $\ssi{P}=1+m+1+\ldots +1=c+m-1=\ssi{Q}$, then, by Proposition \ref{polprimes}, $P,Q\in \Ass(k[\Delta^{(m)}])$.
	We claim that there is a locally connected path $Q,Q_1,P$ of length 2 between $Q$ and $P$ and such that $x_{1,1}\in Q_1$.  This claim implies the base case by setting $\q_1:=\bar{Q_1}$, because $\q,\ovl{Q_1},\p_1$ is a locally connected path in $\mathbf{\mathcal{G}}(k[\Delta])$ and  $x_1\in \ovl{Q_1}$. 
	
	Let $Q=:P_0,P_1,\ldots,P_r,P_{r+1}:=P$ be a shortest locally connected path between $Q$ and $P$, so $P_j\subseteq P+Q$ for all $j$.
	Let $1\leq u \leq r$ be such that $P_u$ does not contain both $y_{1,2}$ and $y_{2,m-1}$
	(such $u$ exists because $Q=P_0$ contains both of them, while $P_{r+1}$ does not contain any of them). We show that $x_{2,m}\notin P_u$. Indeed, if $x_{2,m}\in P_u$ then the inequality $\ssi{P_u}\leq c+m-1$ implies that the second index of the other $c-1$ variables in $P_u$ is $1$. Since $m\geq 3$, then the second index of $y_{2,m-1}$ is at least 2, thus $x_{1,1},z_{3,1},\ldots,z_{c,1}$ are the only variables with second index 1 in $P+Q$. It follows that $P_u=P$, thus $Q=P_0,P_1,\ldots,P_u=P$ is a locally connected path, contradicting the minimal length of the previous path.
	
	Now, since $P_u\subseteq P+Q$ but $x_{2,m}\notin P_u$, and one among $y_{1,2}$ and $y_{2,m-1}$ is not in $P_u$, then $P_u$ is generated by $x_{1,1},z_{3,1},\ldots,x_{c,1}$, and one between $y_{1,2}$ and $y_{2,m-1}$. In either case, $Q,P_u,P$ is a locally connected path and $x_{1,1}\in P_u$. This proves the base case ${\rm grade}(\p+\q)=2$.
	
	If ${\rm grade}(\p+\q) \ge 3$, let $\p=\p_0,\p_1,\ldots, \p_{a-1}, \p_a=\q$ be a shortest locally connected path between $\p$ and $\q$. By Remark \ref{localpath}(2) and (6)(b) we know that $a\geq 3$ and $a(\p,\p_2)\leq 2$. However if $a(\p,\p_2)=1$, then $\p,\p_2$ is a locally connected path, so $\p,\p_2,\p_3,\ldots,\p_a=\q$ is a locally connected path of length $<a$, yielding a contradiction. So $a(\p,\p_2)=2$ and thus ${\rm grade}(\p +\p_2)=2$, by Remark \ref{localpath}(6)(b). 
    
	Recall that $x_1\in \p$. Next, we observe we may assume $x_1\in \p_1$. Indeed, if $x_1\in \p_2$, since $a(\p,\p_2)={\rm grade}(\p+\p_2)=2$, then $x_1\in \p_1$ by Remark \ref{localpath}(5). If, instead, $x_1\notin \p_2$, then by the base case there exists a locally connected path $\p,\p_1',\p_2$ with $x_1\in \p_1'$. Since $\p_1'\subseteq \p+\p_2\subseteq \p+\q$, then $\p,\p_1',\p_2,\ldots,\p_{a-1},\q$ is a locally connected path of minimal length connecting $\p$ and $\q$, and so, after replacing $\p_1$ by $\p_1'$, we may assume $x_1\in \p_1$. The statement now  follows by inductive hypothesis, since $x_1\in \p_1$ and ${\rm grade}(\p_1+\q)\leq a-1$, by Remark \ref{localpath}(6)(a).

$(1) \Lra (2)$. Let $J_m=[I^{(m)}]^{\pol}\subseteq T$ be as in Notation \ref{J_m}. For ease of notation, we identify the variables of $T$ with the vertex set of $\Delta^{(m)}$, and the variables of $R$ with the vertex set of $\Delta$. Since $\Delta$ is a pure simplicial complex, so is  $\Delta^{(m)}$, and thus the ideal $J_m$ is unmixed. To prove vertex decomposability of $\Delta^{(m)}$ we show that its Alexander dual $[\Delta^{(m)}]^{\vee}$ is weakly polymatroidal {\em under the order of the variables defined in Notation \ref{J_m}}. For the rest of the proof, we write every facet of $[\Delta^{(m)}]^{\vee}$ following this order, i.e. if we write $F=\{x_{i_1,a_1},x_{i_2,a_2},\ldots, x_{i_s,a_s}\}$ we silently mean $x_{i_1,a_1}<x_{i_2,a_2}< \ldots < x_{i_s,a_s}$, in particular, $a_1\leq a_2\leq \ldots \leq a_s$. To $F$ we associate a prime ideal $\overline{\p_{F^*}}\subseteq k[\Delta]$, obtained by ``forgetting" the second indices in $F$, and an integer $\ssi{F}$ as follows 
$$\ovl{\p_{F^*}}:=(x_{i_1},\ldots,x_{i_s})\subseteq S,\qquad \quad \text{ and }\quad \qquad \ssi{F}:=\sum_{\ell=1}^s a_\ell.$$

Finally, we recall that
		$$
		\F([\Delta^{(m)}]^{\vee})=\{F:=\{x_{i_1,a_1},x_{i_2,a_2},\ldots, x_{i_c,a_c}\} \,\mid \, \ovl{\p_{F^*}}\in \Ass(k[\Delta]),\, a_i\in \ZZ_+ \text{ and } \ssi{F}\leq m+c-1\},
		$$
  and we totally order $\F([\Delta^{(m)}]^{\vee})$ using the Lex order of Notation \ref{J_m}. %

Let $F<G\in \mathcal \F([\Delta^{(m)}]^{\vee})$. We can write them as $G=\{x_{j_1,b_1},x_{j_2,b_2},\ldots, x_{j_c,b_c}\}$ and $F=\{x_{j_1,b_1}, x_{j_2,b_2},\ldots, x_{j_r,b_r},\,x_{i_{r+1},a_{r+1}},\ldots, x_{i_c,a_c}\}$ for some $r\geq 0$, with $x_{i_{r+1},a_{r+1}}< x_{j_{r+1},b_{r+1}}$. Notice that in particular $a_{r+1}\leq b_{r+1}$. We distinguish two cases:
\begin{itemize}
\item \underline{Case 1: $i_{r+1} \in \{j_{r+1},\ldots,j_c\}$.} Let $t\geq r+1$ be such that $j_t=i_{r+1}$. Set $G_0:=G \cup\{x_{i_{r+1},a_{r+1}}\} -\{x_{i_{r+1}, b_t}\}$. We only need to check that $G_0\in \F([\Delta^{(m)}]^{\vee})$. First, $\ovl{\p_{G_0^*}}=(x_{i_1},\ldots,x_{i_r},x_{j_{r+1}},\ldots,x_{j_c})=\ovl{\p_{G^*}}$, and since $G\in \F([\Delta^{(m)}]^{\vee})$, then $\ovl{\p_{G_0^*}}=\ovl{\p_{G^*}}\in \Ass(k[\Delta])$.
Secondly, since $a_{r+1}\leq b_{r+1}\leq b_t$, then
$$\ssi{G_0} = \sum_{\ell\neq t} b_\ell + a_{(r+1)} \leq    \sum_{\ell\neq t} b_\ell + b_{t} =: \ssi{G}\leq m+c-1,$$
where the rightmost inequality holds because $G \in \F([\Delta^{(m)}]^{\vee})$. Thus $G_0 \in \F([\Delta^{(m)}]^{\vee})$.\\
\item \underline{Case 2: $i_{r+1}\notin \{j_{r+1},\ldots,j_c\}$.} Since  $\ovl{\p_{F^*}},\ovl{\p_{G^*}}\in \Ass(k[\Delta])$, $\Delta$ is a matroid, and $x_{i_{r+1}}\in \ovl{\p_{F^*}} - \ovl{\p_{G^*}}$, then there exists  $x_{j_t}\in \ovl{\p_{G^*}} - \ovl{\p_{F^*}}$ such that replacing in $\ovl{\p_{F^*}}$ the variable $x_{i_{r+1}}$ with $x_{j_t}$ gives another associated prime, $\q$, of $k[\Delta]$. Observe that since $F<G$ and $x_{j_t}\notin \ovl{\p_{F^*}}$, then $t\geq r+1$. Now, let $G_0:=G \cup\{x_{i_{r+1},a_{r+1}}\} -\{x_{j_t, b_t}\}$. Now, $\ssi{G_0}\leq \ssi{G}\leq m+c-1$ as above, moreover $\ovl{\p_{G_0^*}}=\q\in \Ass(k[\Delta])$, thus $G_0\in \F([\Delta^{(m)}]^{\vee})$.	
\end{itemize}	
		
Then $[\Delta^{(m)}]^{\vee}$ is weakly polymatroidal, so $\Delta^{(m)}$ is vertex decomposable (e.g. \cite[Thm~2.5]{Mo11}). 

\end{proof}

\begin{Remark}\label{m=3}
The assumption that $m\geq 3$, and the choices of $P$ and $Q$, are both crucial in the proof of $(5)\Lra (1)$ as one can see, for instance, in the base case of the induction. In fact, that part of the proof needs both $y_{1,2}$ and $y_{2,m-1}$ to have second index $>1$ so that, by maximality of $\ssi{Q}$, $x_{2,m}$ cannot replace any of them. These second indices are both $>1$ because $m\geq 3$. So our proof shows very clearly  why $m\geq 3$ is needed.

If $m=2$, i.e. if $R/I_{\Delta}^{(2)}$ is $(S_2)$, then it is not necessarily true that $\Delta$ is a matroid. See for instance Proposition \ref{2ndSymbPwr} and the discussion before Definition \ref{2-loc}.
\end{Remark}

In the rest of this section we provide further illustration of the benefits of our approach.

\subsection{Application 1. The simplicial complexes $\Delta^{(m)}$ are glicci} 
Before stating our first consequence of Theorem \ref{cmsymb}, we provide some context. Although liaison theory had been used since the late nineteenth century, it was first formally introduced by Peskine and Szpiro in \cite{PS74}. Schenzel and, later, Nagel \cite{Sc82} \cite{Na98} showed that several useful theorems still apply in the context of the more general theory of $G$--liaison. Possibly, the single most relevant question in G--liaison asks whether every Cohen--Macaulay ideal $J_1$  in a polynomial ring over a field is {\em glicci}, i.e. if there exist $s\in \ZZ_+$ and, Gorenstein ideals $G_1,\ldots,G_{s-1}$ with $G_1\subseteq J_{1}$ and  $G_i\subseteq J_i:=G_{i-1}:J_{i-1}$ for $i\geq 2$, such that $J_{s}:=G_{s-1}:J_{s-1}$ is a complete intersection ideal \cite[Question~1.6]{KMMN01}. 

A Cohen--Macaulay ideal $J_0$ is said to be {\em licci}, if, additionally, one assumes that $G_1,\ldots,G_{s-1}$ are complete intersections ideals. Being licci is much more restrictive than being glicci, and the next result provides yet another illustration of it. 
Following the principle stated at the beginning of Section 2, we say that a simplicial complex $\Gamma$ is {\em licci} ({resp. \rm glicci}) if $I_\Gamma$ is licci (resp. glicci).

 An immediate consequence of our proof of Theorem \ref{cmsymb} is that if $\Delta$ is any matroid, then $\Delta^{(m)}$ is glicci for every $m$. This new result follows from our addition to Theorem \ref{cmsymbintro}, i.e. part (2). To the best of our understanding, it does follow (at least, not quickly) from the previous proofs of Theorem \ref{cmsymbintro} in the literature, thus it illustrates some of the benefits of our approach.  

\begin{Corollary}\label{glicci}
Let $\Delta$ be a matroid of corank $c$, and let $m\geq 2$ be an integer. 

\begin{enumerate}
    \item If $c\leq 2$, then $\Delta^{(m)}$ is a licci simplicial complex and, thus, glicci. The ideals $I_{\Delta}^{(m)}$ are all licci and glicci.
    \item If $c\geq 3$, then the simplicial complex $\Delta^{(m)}$ is not licci, and $I_{\Delta}^{(m)}$ is not a licci ideal. The simplicial complex $\Delta^{(m)}$ is glicci.  
\end{enumerate}
\end{Corollary}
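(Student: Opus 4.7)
The plan is to reduce both parts to statements about the polarization $J_m := (I_\Delta^{(m)})^{\pol} \subseteq T$. By Theorem \ref{cmsymb}, $J_m$ is a Cohen--Macaulay squarefree monomial ideal of height $c$, and $R/I_\Delta^{(m)}$ is obtained from $T/J_m$ by quotienting by a regular sequence of linear forms (the defining relations of polarization). Since both the licci and glicci properties transfer under specialization by a regular sequence of linear forms on a Cohen--Macaulay quotient, it suffices to establish all conclusions at the level of $J_m$ and then carry them back to $I_\Delta^{(m)}$.

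For part (1), if $c \leq 1$ then $J_m$ is either zero or principal, hence a complete intersection and in particular licci. If $c = 2$, then $J_m$ is a Cohen--Macaulay ideal of height $2$ in a polynomial ring, and by the classical Ap\'ery--Gaeta theorem every such ideal is licci. In either case $J_m$ is licci, hence glicci, and the corresponding conclusions for $I_\Delta^{(m)}$ follow by the specialization argument above.

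For part (2), the glicci statement is immediate from Theorem \ref{cmsymb}(2): since $\Delta^{(m)}$ is vertex decomposable, the theorem of Nagel--R\"omer asserting that Stanley--Reisner ideals of vertex decomposable complexes are glicci implies that $J_m$, and hence $I_\Delta^{(m)}$, is glicci. The main obstacle is showing $J_m$ is \emph{not} licci when $c \geq 3$. For this I would invoke a known restriction on squarefree monomial ideals---namely, that a Cohen--Macaulay squarefree monomial ideal of height $\geq 3$ can be licci only if it is already a complete intersection---and then verify that $J_m$ is not a complete intersection. By Proposition \ref{polcm}(1), this is equivalent to checking that $I_\Delta^{(m)}$ is not a complete intersection: if one had $I_\Delta^{(m)} = (f_1, \dots, f_c)$ as a monomial complete intersection of height $c$, then taking radicals would yield $I_\Delta = (\sqrt{f_1}, \dots, \sqrt{f_c})$ with the $\sqrt{f_i}$ having pairwise disjoint supports, so by Lemma \ref{ci} the matroid $\Delta$ would be a complete intersection and $I_\Delta^{(m)} = I_\Delta^m = (f_1, \dots, f_c)^m$, which for $m \geq 2$ and $c \geq 2$ requires $\binom{c+m-1}{m} > c$ minimal monomial generators---a contradiction.
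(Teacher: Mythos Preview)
Your argument has a genuine gap in part (2), in the ``not licci'' direction. The ``known restriction'' you invoke---that a Cohen--Macaulay squarefree monomial ideal of height $\geq 3$ is licci only if it is a complete intersection---is false. A concrete counterexample is the Stanley--Reisner ideal of the $5$-cycle,
\[
I=(x_1x_3,\; x_1x_4,\; x_2x_4,\; x_2x_5,\; x_3x_5)\subseteq k[x_1,\dots,x_5],
\]
which is a height-$3$ Gorenstein squarefree monomial ideal (the $5$-cycle is a simplicial $1$-sphere). Every Gorenstein ideal of codimension $3$ in a regular local ring is licci, yet $I$ has five minimal generators and is certainly not a complete intersection. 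So your deduction that $J_m$ is not licci from ``$J_m$ is not a complete intersection'' does not go through; your verification that $I_\Delta^{(m)}$ (hence $J_m$) fails to be a complete intersection is correct but insufficient. The paper instead applies obstructions of Polini--Ulrich \cite[Prop.~2.3 or Thm.~2.10]{PU98} directly to $I_\Delta^{(m)}$ to conclude it is not licci, and then transfers this to $\Delta^{(m)}$ using that the licci property is stable under deformation and specialization.

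There is also a secondary issue with your framing. You assert at the outset that the glicci property transfers under specialization by a regular sequence of linear forms; this is not known in general, and the paper explicitly flags it as open in the paragraph following Conjecture~\ref{Conj-glicci}. Fortunately the corollary, in part (2), only claims that the \emph{simplicial complex} $\Delta^{(m)}$ is glicci, and your Nagel--R\"omer argument via vertex decomposability proves exactly that. Your added clause ``and hence $I_\Delta^{(m)}$, is glicci'' overclaims what is known---that statement is precisely the content of the paper's open Conjecture~\ref{Conj-glicci}. In part (1) this issue is harmless, since licci does transfer and licci implies glicci; your treatment of part (1) matches the paper's.
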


\begin{proof}
	First of all, being licci is preserved by specializations and deformations (e.g. by \cite[Prop.~2.8 and Lem.~2.16]{HU87}). In particular, 
	$I_{\Delta}^{(m)}$ is a licci ideal if and only the simplicial complex $\Delta^{(m)}$ is licci. 
	
	(1) It is well--known (it goes back to Ap\'ery and Gaeta) that every Cohen--Macaulay ideal of grade $c\leq 2$ in $R$ is licci \cite{Ap45,Ga52}. The second part holds because every licci ideal is glicci.
	
	(2) The proof of Theorem \ref{cmsymb} shows that $\Delta^{(m)}$ is vertex decomposable, therefore, according to \cite[Thm~3.3]{NR08}, $\Delta^{(m)}$ is squarefree glicci (see \cite[Def.~2.1]{NR08}) and, thus, glicci. 
	
	On the other hand, it follows by \cite[Prop.~2.3 or Thm.~2.10]{PU98} that $I_\Delta^{(m)}$ is not licci so, by the above, $\Delta^{(m)}$ is not a licci simplicial complex.
\end{proof}

We actually conjecture that $I_{\Delta}^{(m)}$ is always glicci in this setting:
\begin{conjecture}\label{Conj-glicci}
$I_{\Delta}^{(m)}$ is glicci for every matroid $\Delta$ and every $m\in \ZZ_+$. 
\end{conjecture}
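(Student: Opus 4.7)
The plan is to leverage Corollary \ref{glicci}, which already shows that the polarization $\Delta^{(m)}$ is glicci as a simplicial complex, and then descend to $I_{\Delta}^{(m)}$ via the linear regular sequence $\underline{\ell}:=\{x_{i,j}-x_{i,1}\,\mid\,1\leq i\leq n,\; 2\leq j\leq m\}$, whose residue ring is exactly $R/I_{\Delta}^{(m)}$. Concretely, I would aim to produce a G-liaison chain $J_m=K_0\sim K_1\sim\cdots\sim K_s$ in $T$ ending at a complete intersection, with intermediate Gorenstein linking ideals $G_1,\ldots,G_s$, satisfying: $\underline{\ell}$ remains a regular sequence on every $T/K_i$ and every $T/G_i$, the height of each ideal is preserved under the specialization, and each $G_i/(\underline{\ell})$ remains Gorenstein. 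Such a chain would descend verbatim to a G-liaison chain from $I_{\Delta}^{(m)}$ to a complete intersection in $R$.

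To build a chain amenable to this descent, I would refine the proof of $(1)\Rightarrow(2)$ in Theorem \ref{cmsymb}. The weakly polymatroidal lex-order of $\mathcal{F}([\Delta^{(m)}]^{\vee})$ used there yields, via \cite[Thm~3.3]{NR08}, an explicit sequence of Basic Double G-linkages, each of the form $K_i=K_{i+1}+\ell\cdot L_i$ for a variable $\ell$ and a Cohen--Macaulay ideal $L_i\supsetneq K_{i+1}$ of equal height. Because every ideal in this BDG-chain is a squarefree monomial ideal, the associated Gorenstein linking ideals may be chosen as explicit sums of complete intersections with principal squarefree elements, and the compatibility with $\underline{\ell}$ can then be checked combinatorially at each stage. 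An alternative, intrinsic route is a double induction on the corank $c$ of $\Delta$ and on $m$. The base cases $c\leq 2$ (Corollary \ref{glicci}(1)) and $m=1$ are known, the latter because matroids are vertex decomposable, hence glicci by \cite{NR08}. In the inductive step, one would remove a coloop $v$ when present, since then $I_{\Delta}^{(m)}=I_{\Delta'}^{(m)}\cdot R$ for the matroid $\Delta'$ obtained by deleting $v$, and, if $\Delta$ has no coloop, attempt to exhibit an explicit Gorenstein ideal $G\subset I_{\Delta}^{(m)}$ whose link $G:I_{\Delta}^{(m)}$ is the symbolic power of a matroid of strictly smaller rank or of strictly smaller symbolic exponent.

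The main obstacle is the notorious failure of G-liaison to descend under specialization: modding out by $\underline{\ell}$ need not preserve grade, Cohen--Macaulayness, or Gorensteinness of the intermediate ideals, and even when it does, a specialized Gorenstein link may fail to sit strictly between the specialized ideals, breaking the link structure. In the intrinsic inductive route, the difficulty is dual: outside of the $c\leq 2$ case and the complete intersection case characterized in Theorem \ref{cmordintro}, there is no ready supply of natural Gorenstein ideals sitting inside $I_{\Delta}^{(m)}=\bigcap_F \mathfrak{p}_F^m$ whose link is itself the symbolic power of a structurally simpler matroid. Any successful construction would need to exploit the full exchange symmetry of matroids in a way that is compatible with the nested primary decomposition, and this is precisely the reason the statement remains conjectural.
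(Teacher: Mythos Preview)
The statement you are attempting to prove is explicitly labeled a \emph{Conjecture} in the paper, and the paper does not provide a proof. Immediately after stating it, the authors explain why their methods do not settle it: being glicci is not preserved under specialization or deformation, so knowing that $\Delta^{(m)}$ is (squarefree) glicci does not automatically yield that $I_{\Delta}^{(m)}$ is glicci. They also remark that the descent criterion of \cite[Thm~3.10]{FKRS} fails here, because the depolarizations of the intermediate deletions in the vertex decomposition are typically not generically Gorenstein once $c\geq 3$ and $m\geq 2$.

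Your proposal is therefore not being compared against a proof, and indeed your write-up is not a proof either: it is a sketch of two strategies together with a candid acknowledgment of the obstruction, ending with the sentence ``this is precisely the reason the statement remains conjectural.'' Your diagnosis of the obstacle---that the BDG/Gorenstein links produced by \cite{NR08} from the weakly polymatroidal order need not remain Gorenstein, of the correct height, or even properly nested after specializing along $\underline{\ell}$---is exactly the point the paper makes. The paper is slightly more specific about \emph{where} the descent breaks: the issue is that the depolarized deletions are not generically Gorenstein, which is the hypothesis one would need to invoke \cite{FKRS}. Your claim that ``compatibility with $\underline{\ell}$ can then be checked combinatorially at each stage'' is the step that is genuinely open; neither you nor the paper carries it out, and there is no indication that it can be done in general. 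Likewise, your alternative inductive route lacks the key ingredient you yourself flag: a supply of Gorenstein ideals inside $I_{\Delta}^{(m)}$ whose links are again symbolic powers of simpler matroids. In short, your proposal accurately surveys the landscape but does not close the gap, and the paper agrees that the gap is real.
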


Unfortunately, in general, being glicci is neither preserved under specializations nor deformations. So Corollary \ref{glicci}(2) does {\em not} imply Conjecture \ref{Conj-glicci}. It is not known whether the stronger condition we actually use, i.e. that $\Delta^{(m)}$ is {\em squarefree} glicci, implies that $I_\Delta^{(m)}$ is glicci. For instance, 
\cite[Thm~3.10]{FKRS} cannot be applied because, in general, the depolarizations of the deletions appearing in the vertex decomposition are far from being generically Gorenstein if $c\geq 3$ and $m\geq 2$. (In fact, often times already the polarization of the first deletion is not generically Gorenstein.)

\subsection{Application 2. Mixed symbolic powers of $I_\Delta$} 
Using our methods, one can provide variations on Theorem \ref{cmsymb} allowing the study of mixed symbolic powers, i.e. ideals $I$ of the form  $I=\p_1^{m_1}\cap \ldots \cap \p_r^{m_r}$. In general,  except for trivial situations (e.g. if $I$ is principal or $\dim(R/I)\leq 1$), these ideals $I$ are much more complicated to investigate than the uniform case, where $m_1=m_2=\ldots=m_r$. For instance, even when $\p_1\cap \ldots \cap \p_r$ is the Stanley--Reisner ideal of a matroid, it is very complicated to completely characterize all the exponents $m_1,\ldots,m_r$ for which $R/I$ has the $(S_2)$ property. One of the rare instances where this characterization is known is when $\sqrt{I}$ is the Stanley--Reisner ideal of the uniform matroid of rank 2 on $\{a,b,c,d\}$, see \cite[Theorem]{Fr08}. Since in such case $\dim(R/I)=2$, then $R/I$ is $(S_2)$ if and only if it is Cohen--Macaulay, a fact used by Francisco to employ crucial additional tools. Nevertheless, the entire paper \cite{Fr08} is dedicated to finding an explicit numerical characterization, which is quite complicated and illustrates the difficulty in studying these ideals. 
\medskip

In the next two results we provide fairly strong combinatorial obstructions to mixed symbolic powers satisfying Serre's condition $(S_2)$. 
\begin{prop}\label{SuffMatroid}
	Let $\Delta$ be a pure simplicial complex, let $F_1,\ldots,F_r$ be its facets and $I_{\Delta}=\p_1\cap \ldots \cap \p_r$, where $\p_i:=\p_{F_i}$. 
	Let $m_1,\ldots,m_r$ be integers with $m_i\geq 3$ for all $i$ and $\max\{m_i\}\leq 2\min\{m_i\}-3$.
Let $H:=\p_1^{m_1}\cap \ldots \cap \p_r^{m_r}$. If $R/H$ satisfies Serre's condition $(S_2)$, then $\Delta$ is a matroid.
\end{prop}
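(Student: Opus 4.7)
My plan is to adapt the argument for $(5)\Rightarrow(1)$ in Theorem~\ref{cmsymb} to the mixed symbolic power $H=\p_1^{m_1}\cap\cdots\cap\p_r^{m_r}$. The polarization-side infrastructure carries over with only notational changes: the natural extension of Proposition~\ref{polprimes}(2) identifies $\Ass(T/H^{\pol})$ with the monomial primes $P=(x_{i_1,a_1},\ldots,x_{i_c,a_c})$ whose underlying prime $\overline{P}$ equals some $\p_k\in\Ass(k[\Delta])$, each $a_j\in\{1,\ldots,m_k\}$, and $\ssi{P}\leq c+m_k-1$. The proof of Proposition~\ref{s2prelim} then transfers verbatim using the fact that $(x_{1,1},\ldots,x_{c,1})$ always belongs to $\Ass(T/H^{\pol})$ for every $\overline{P}=\p_k$, so $\Delta$ is pure and satisfies $(S_2)$.

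To upgrade $(S_2)$ to the matroid exchange property on $\Ass(k[\Delta])$, I would induct on $\grade(\p+\q)$, imitating the scheme of Theorem~\ref{cmsymb}. The inductive step for $\grade(\p+\q)\geq 3$ depends only on local connectivity of $\mathbf{\mathcal{G}}(T/H^{\pol})$ (via Theorem~\ref{lc} and Proposition~\ref{polcm}(3)) together with Remark~\ref{localpath}, and therefore carries over unchanged. The substance of the proof is the base case $\grade(\p+\q)=2$.

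For the base case, write $\p=\p_k=(x_1,x_2,z_3,\ldots,z_c)$, $\q=\p_\ell=(y_1,y_2,z_3,\ldots,z_c)$, and fix $x_1\in\p-\q$. Let $m:=\min\{m_i\}$ and $M:=\max\{m_i\}$; the hypothesis $M\leq 2m-3$ is equivalent to $m_k+m_\ell\geq 2m\geq M+3$. I would construct
\[
P:=(x_{1,1},\,x_{2,m_k},\,z_{3,1},\ldots,z_{c,1})\in\Ass(T/H^{\pol})
\]
and $Q:=(y_{1,a},y_{2,b},z_{3,1},\ldots,z_{c,1})\in\Ass(T/H^{\pol})$, with $a,b\geq 2$ and $a+b=m_\ell+1$ chosen so that, for any prime $P'\subseteq P+Q$ in $\Ass(T/H^{\pol})$ that contains $x_{2,m_k}$ together with at least one of $y_{1,a}$ and $y_{2,b}$, the value $\ssi{P'}$ exceeds the maximal admissible ceiling $c+M-1$. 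Applying local connectivity to produce a shortest locally connected path $Q=P_0,\ldots,P_{r+1}=P$ and selecting $u\in\{1,\ldots,r\}$ with $P_u$ missing at least one of $y_{1,a},y_{2,b}$, the calibration rules out $x_{2,m_k}\in P_u$. Pigeonholing $P_u$ inside $P+Q-\{x_{2,m_k}\}$ then forces $P_u=\{x_{1,1},z_{3,1},\ldots,z_{c,1},y_{i,?}\}$ for some $i\in\{1,2\}$, so $\overline{P_u}=\q-(y_i)+(x_1)\in\Ass(k[\Delta])$ is precisely the prime $\q_1$ needed for the exchange.

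The main obstacle is calibrating $(a,b)$ precisely enough to force $x_{2,m_k}\notin P_u$. In the uniform regime of Theorem~\ref{cmsymb}, the $\ssi{P_u}$-ceiling is rigidly $c+m-1$, making the case analysis almost immediate. In the mixed setting, the ceiling on $\ssi{P_u}$ loosens all the way up to $c+M-1$, and one must absorb the extra slack $M-m_k$ by choosing the second indices of the $y$-variables large enough to exceed the threshold $M-m_k+1$ in every relevant subcase. The inequality $m_k+m_\ell\geq M+3$ supplied by the hypothesis is exactly what makes such a choice simultaneously consistent with $a+b=m_\ell+1$ and $a,b\geq 2$, and its interaction with the shortest locally connected path is the technical heart of the argument; this also explains why both $m_i\geq 3$ and $\max m_i\leq 2\min m_i-3$ appear as hypotheses.
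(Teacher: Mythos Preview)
Your overall strategy coincides with the paper's: both adapt $(5)\Rightarrow(1)$ of Theorem~\ref{cmsymb}, carry the inductive step over unchanged via Remark~\ref{localpath}, and concentrate the work in the base case $\grade(\p+\q)=2$ by taking $P=(x_{1,1},x_{2,m_k},z_{3,1},\ldots,z_{c,1})$ together with an appropriately calibrated $Q=(y_{1,a},y_{2,b},z_{3,1},\ldots,z_{c,1})$.

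There is, however, a numerical gap in your calibration. Forcing $\ssi{P'}>c+M-1$ whenever $P'\subseteq P+Q$ lies in $\Ass(T/H^{\pol})$ and contains $x_{2,m_k}$ together with one of the $y$-variables requires $m_k+a+(c-2)>c+M-1$ and likewise for $b$, i.e.\ $a,b\geq M-m_k+2$. Combined with $a+b=m_\ell+1$ this yields $2m_k+m_\ell\geq 2M+3$, which is strictly stronger than the inequality $m_k+m_\ell\geq M+3$ you invoke. Concretely, take $m_k=m_\ell=4$ and suppose some third exponent equals $M=5$ (so $\mu=4$ and $M=2\mu-3$, squarely within the hypotheses): you would then need $a,b\geq 3$ with $a+b=5$, which is impossible. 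The paper's calibration is different in spirit: rather than trying to exceed the global ceiling $c+M-1$, it bounds the $y$-indices from \emph{above} by $m_k-1$, and the relevant contradiction is meant to come from the sharper bound $m_j\leq 2\mu-3\leq 2m_k-3$ on the exponent of $\overline{P_s}$ (valid because $x_{2,m_k}\in P_s$ already forces $m_j\geq m_k\geq\mu$). Your plan to beat $c+M-1$ is therefore demanding more than the hypothesis actually provides; the fix is to work with the ceiling $c+(2m_k-3)-1$ instead.
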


\begin{proof}
Let $\p=(x_1,\ldots,x_c)$ and $\q=(y_1,\ldots,y_c)$. The proof of (5) $\Lra$ (1) of Theorem \ref{cmsymb} holds verbatim after making the following adjustments in the base case ${\rm grade}(\p +\q)=2$ of the induction. Let $\p^m$ and $\q^u$ be the $\p$-primary and $\q$-primary components of $H$, we still take $P:=(x_{1,1},x_{2,m},z_{3,1},\dots,z_{c,1})$ but this time we take $Q$ to be $(y_{1,b},y_{2,u+1-b},z_{3,1},\dots,z_{c,1})$ where both $b$ and $u+1-b$ are $\leq m-1$. Thus, if $m>u$, one can take any $1\leq b\leq u$. If $m\leq u$, take $b:=m-1$; notice that since $u\leq 2m-3$, then also $u+1-(m-1)$ is at most $m-1$, as needed.
\end{proof}

The condition on the $m_i$'s is sharp as the following example illustrates. 
\begin{example}
Let $\Delta=\langle ab, bc, bd, cd\rangle$, so $I_\Delta=(a,b)\cap (a,c)\cap (a,d)\cap (c,d)\subseteq k[a,b,c,d]=R$. 

Let $H=(a,b)^3\cap (a,c)^4\cap (a,d)^4\cap (c,d)^3$, so 
$\max\{m_i\}=4=2\min\{m_i\}-2$. By \cite[Theorem]{Fr08} the ring $R/H$ is Cohen--Macaulay and, thus, $(S_2)$, but $\Delta$ is not a matroid.
\end{example}

Our next result is an improved version of a theorem proved by Herzog, Takayama and Terai \cite[Thm.~3.2]{HTT05} and Minh and Trung \cite[Cor.~1.9]{MT11}. Specifically, we add to these results part (a), showing how we can use the $(S_2)$ property of a modification of $I_{\Delta}$ to deduce combinatorial information about $\Delta$, part (b).(3), which relaxes the Cohen--Macaulay property of the previously mentioned results, and part (b).(2), which gives concrete ideals to test the condition.

\begin{theorem}\label{mixedSymbPwrs}
	Let $\Delta$ be a pure simplicial complex on, let $F_1,\ldots,F_r$ be its facets and $I_{\Delta}=\p_1\cap \ldots \cap \p_r$, where $\p_i:=\p_{F_i}$. Then 
	\begin{itemize}
		\item[$($a$)$] If there exists $m\geq 3$ such that $R/(\p_1^m \cap \p_2 \cap \ldots \cap \p_r)$ satisfies Serre's condition $(S_2)$, then the labelled graph  $\mathbf{\mathcal{G}}(k[\Delta])$ is a cone over $v_{\p_1}$.

		\item[$($b$)$] The following are equivalent:
		\begin{enumerate}
			\item $\p_1^{m_1}\cap \ldots \cap \p_r^{m_r}$ is Cohen--Macaulay for every $m_1,\ldots,m_r\in \ZZ_+$;
			\item for every $i$ there exists $m_i\geq 3$ such that $R/(\p_1\cap \ldots \cap \p_{i-1} \cap \p_i^{m_i} \cap \p_{i+1}\cap \ldots \cap \p_r)$  satisfies Serre's condition $(S_2)$;
			\item $R/(\p_1 \cap \ldots \cap \p_{i-1} \cap \p_i^3 \cap \p_{i+1}\cap \ldots \cap \p_r)$  satisfies Serre's condition $(S_2)$ for all $1\leq i \leq r$;

			\item $\Delta$ is a cone over a 0-dimensional complex.
		\end{enumerate}
	\end{itemize}
\end{theorem}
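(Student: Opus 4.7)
The plan is to leverage the polarization and local-connectedness machinery used in the proof of Theorem~\ref{cmsymb}, suitably adapted to the asymmetric role of $\p_1$. Let $H:=\p_1^m\cap\p_2\cap\cdots\cap\p_r$ and $J:=H^{\pol}\subseteq T$. By Proposition~\ref{polcm}(3) and Theorem~\ref{lc}, if $R/H$ satisfies $(S_2)$ then $\mathbf{\mathcal{G}}(T/J)$ is locally connected. The key structural feature of $\Ass(T/J)$ is an asymmetry between $\p_1$ and the remaining $\p_\ell$: primes lying over $\p_1^m$ can have second-index sum up to $c+m-1$, whereas any prime lying over some $\p_\ell$ with $\ell\neq 1$ arises from a trivial polarization and has all second indices equal to $1$.

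For part~(a), I argue by contradiction. Suppose there is $j\neq 1$ with $s:=\grade(\p_1+\p_j)\geq 2$ in $k[\Delta]$, and write $\p_1=(x_1,\ldots,x_s,z_{s+1},\ldots,z_c)$ and $\p_j=(y_1,\ldots,y_s,z_{s+1},\ldots,z_c)$. I consider the primes
$$P:=(x_{1,m-1},x_{2,2},x_{3,1},\ldots,x_{s,1},z_{s+1,1},\ldots,z_{c,1})\quad\text{and}\quad Q:=(y_{1,1},\ldots,y_{s,1},z_{s+1,1},\ldots,z_{c,1}).$$
Since $m\geq 3$ one has $\ssi{P}=c+m-1$, so $P\in\Ass(T/J)$ arises from $\p_1^m$ by Proposition~\ref{polprimes}; and $Q\in\Ass(T/J)$ is the unique associated prime coming from the trivially polarized $\p_j$. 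Take a shortest locally connected path $P=P_0,P_1,\ldots,P_u=Q$. The crucial observation is that any $P_i$ with $\ovl{P_i}\neq\p_1$ arises from some $\p_\ell$ with $\ell\neq 1$, so all its second indices equal $1$ and in particular it contains neither $x_{1,m-1}$ nor $x_{2,2}$. Let $t$ and $t'$ be the largest indices with $x_{1,m-1}\in P_t$ and $x_{2,2}\in P_{t'}$. At the step $P_t\to P_{t+1}$, if $\ovl{P_{t+1}}=\p_1$ then the newly inserted variable must have first index $x_1$; but $x_1\notin\p_j$ forces $x_{1,m-1}$ to be the only such variable in $P+Q$, a contradiction. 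Hence $\ovl{P_{t+1}}\neq\p_1$, so $x_{2,2}\notin P_{t+1}$ and $t'\leq t$. By symmetry $t\leq t'$, so $t=t'$; but then a single step would remove both $x_{1,m-1}$ and $x_{2,2}$, which is impossible.

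For part~(b), the implications $(1)\Rightarrow(3)\Rightarrow(2)$ are immediate, since Cohen--Macaulay implies $(S_2)$ and $(3)$ is the special case $m_i=3$ of $(2)$. For $(2)\Rightarrow(4)$, I apply part~(a) in the role of each $i$, concluding that $\mathbf{\mathcal{G}}(k[\Delta])$ is a cone over every $v_{\p_i}$ and hence the complete graph on its vertex set; equivalently, every pair of facets of $\Delta$ differs by exactly one swap. Fixing a facet $F_1$, writing every other facet as $F_1\setminus\{a_j\}\cup\{b_j\}$, and comparing triples of facets shows that, under the standing nondegeneracy hypothesis $c\geq 2$ (which excludes the uniform matroid $U_{d,d+1}$ configuration), all facets other than $F_1$ must swap out the same vertex $a\in F_1$, so $\Delta=(F_1\setminus\{a\})\ast\Gamma$ for a $0$-dimensional complex $\Gamma$. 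For $(4)\Rightarrow(1)$, write $\Delta=\sigma\ast\Gamma$ and reduce modulo the regular sequence of cone-point variables of $\sigma$ to assume $\Delta$ itself is $0$-dimensional; then for any exponents $m_i$ the intersection $\bigcap_i\p_i^{m_i}$ has Krull dimension $1$ with pairwise incomparable associated primes $\{\p_i\}$, so there are no embedded components and $R/\bigcap_i\p_i^{m_i}$ is Cohen--Macaulay.

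The main obstacle is the combinatorial step in $(2)\Rightarrow(4)$: extracting the join structure $\sigma\ast\Gamma$ from completeness of $\mathbf{\mathcal{G}}(k[\Delta])$ and correctly handling the $c=1$ uniform-matroid configuration. In part~(a), the delicate point is the simultaneous tracking of $x_{1,m-1}$ and $x_{2,2}$: the specific choice of second indices $m-1$ and $2$ (both $\geq 2$ exactly when $m\geq 3$) is precisely what makes it impossible to remove either variable by a transition internal to the stratum of $\p_1$-primes.
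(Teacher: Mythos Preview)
Your approach matches the paper's closely. For part~(a), both arguments polarize, choose a prime over $\p_1$ carrying two generators of second index $\geq 2$ (you use $m-1$ and $2$, the paper uses $2$ and $m-1$) together with the unique prime over $\p_j$, and exploit that any associated prime $P_i$ with $\bar{P_i}\neq\p_1$ must have $\ssi{P_i}=c$. The paper's bookkeeping is marginally cleaner: it takes the \emph{first} index $u$ at which at least one of the two special generators is absent and observes that the other is still present, giving $\ssi{Q_u}>c$ while $\bar{Q_u}\neq\p_1$ in one stroke. You instead track the \emph{last} appearance of each generator separately and argue by symmetry. This works, but note a small slip: from $x_{2,2}\notin P_{t+1}$ alone you cannot conclude $t'\leq t$, since $x_{2,2}$ could in principle reappear at some later $P_k$. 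The fix is immediate---your argument that $\bar{P_{t+1}}\neq\p_1$ applies verbatim to every $k>t$ (because $x_{1,m-1}$ is the only variable in $P+Q$ with first index $x_1$, and $x_{1,m-1}\notin P_k$ for $k>t$), so $x_{2,2}\notin P_k$ for all such $k$.

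For part~(b), your cycle of implications and your proof of $(4)\Rightarrow(1)$ coincide with the paper's. Your sketch of $(2)\Rightarrow(4)$ is actually more detailed than the paper's one-line ``by~(a) any two facets satisfy $|F\cap G|=|F|-1$'', and you correctly flag that passing from ``$\mathcal{G}(k[\Delta])$ is complete'' to ``$\Delta$ is a cone over a $0$-dimensional complex'' is not automatic: one must exclude the alternative where all facets $F_j$ swap \emph{in} a common vertex $b$ rather than swap \emph{out} a common $a$. This exclusion uses $c\geq 2$ together with the convention that every vertex lies in some facet (otherwise some vertex outside $F_1$ other than $b$ would be unused). The paper glosses over this step entirely, so your treatment is at least as careful; just be aware that $c\geq 2$ is not literally a standing hypothesis of this theorem and needs a word of justification.
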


\begin{proof}
(a) Let $c:={\rm grade}(I_\Delta)$, $H:=\p_1^m \cap \p_2 \cap \ldots \cap \p_r$ and let $J=H^{\pol}\subseteq T$. By Proposition \ref{polcm} (3), $T/J$ satisfies $(S_2)$. By definition of $H$, one has $\ssi{\q}=c$ for every $\q\in \Ass(T/J)$, except if $\bar{\q}=\p_1$.  

We need to show that $|F_1\cap F_i|=|F_1|-1$. Without loss of generality, for simplicity of notation, we prove $|F_1\cap F_2|=|F_1|-1$. Assume not, then $|F_1\cap F_2|<|F_1|-1$, so ${\rm grade}(\p_1+\p_2)\geq {\rm grade}(\p_1)+2$ and one can write $\p_1=(y_1,y_2)+P_1$ and $\p_2=(x_1,x_2)+P_2$, where $x_1,x_2 \notin \p_1$, $y_1,y_2\notin \p_2$, and $P_1,P_2$ are prime ideals of grade $c-2$. By Proposition \ref{polprimes}, the ideals $\q_1:=(y_{1,2}, y_{2,m-1})+(z_{j,1}\,\mid\,z_j\in P_1)$ and $\q_2:=(x_{1,1}, x_{2,1}) + (w_{j,1}\,\mid\,w_j\in P_2)$ are in $\Ass(T/J)$, so, by assumption and Theorem \ref{lc}, there is a locally connected path $\q_1,Q_1,\ldots,Q_s,\q_2$ in $\mathbf{\mathcal{G}}(T/J)$. 

Since $y_{1,2}$, $y_{2,m-1}$ are both in $\q_1$ and neither of them is in $\q_2$, in the locally connected path there is a smallest index $1\leq u \leq s$ such that $Q_u$ does not contain both $y_{1,2}$ and $y_{2,m-1}$. One then has $Q_{u-1}=(y_{1,2}, y_{2,m-1},\ldots)$ and $Q_u$ is obtained from $Q_{u-1}$ by replacing only one between $y_{1,2}$ or $y_{2,m-1}$ with a variable $t$ chosen among the variables $z_{j,1}$ or $w_{j,1}$ in $(\q_1+\q_2)-Q_{u-1}$. Since at least one of $y_1$ and $y_2$ is not contained in $\ovl{Q_u}$, then $\bar{Q_u}\neq \p_1$. On the other hand, since $m\geq 3$, then $Q_u$ contains one variable (either $x_{1,2}$ or $x_{2,m-1}$) with second index $\geq 2$, so in particular $\ssi{Q_u}>c$, therefore, by the above, $\bar{Q_u}=\p_1$, which yields a contradiction.

(b) The implications (1) $\Lra$ (2) $\Lra$ (3) are clear. (3) $\Lra$ (4) holds because by (a) for any two facets $F,G$ of $\Delta$ one has $|F\cap G|=|F|-1$. (4) $\Lra$ (1) Let $m_1,\ldots,m_r\in \ZZ_+$ and let $L:=\p_1^{m_1}\cap \ldots \cap \p_r^{m_r}$. By Remark \ref{variables} we may assume $\Delta$ is the simplicial complex consisting only of the $n$ vertices. Then  $\dim(R/\p_i)=1$ for every $i$, so $\dim(R/L)=1$, and since the maximal ideal is not an associated prime, then $R/L$ is Cohen--Macaulay.

\end{proof}

Theorem \ref{cmsymb} gives a combinatorial characterization of simplicial complexes $\Delta$ for which there exists $m\geq 3$ such that $R/I_{\Delta}^{(m)}$ has Serre's condition $(S_2)$. It is natural to ask a similar question for $R/I_{\Delta}^{(2)}$. In this regard, Rinaldo, Terai and Yoshida proved the following characterization: $R/I_{\Delta}^{(2)}$ is $(S_2)$ if and only if for every face $F\in \Delta$ for which $\dim\left(\lk_{\Delta}(F)\right)\geq 1$, the 1-skeleton of $\lk_{\Delta}(F)$ has diameter at most 2, \cite[Cor.~3.3]{RTY11}. 

Also, Minh and Trung proved in \cite[Thm.~2.5]{MT11} that if $\Delta$ is a tight simplicial complex, then $R/I_{\Delta}^{(2)}$ is Cohen--Macaulay. Recall that $\Delta$ is {\em tight}
if it is pure and for any two facets $F\neq G$ and any $i\in F-G$ and $j\in G-F$ there exists a facet $H$ such that $F\cap G \subseteq H \subseteq F\cup G$ and $H\cap \{i,j\}\neq \emptyset$.

Drawing inspiration from the notion of tight simplicial complexes, we provide here an alternative combinatorial characterization of the simplicial complexes for which $R/I_{\Delta}^{(2)}$ satisfies $(S_2)$.

\begin{defn}\label{2-loc}
We  say that a pure simplicial complex $\Delta$ {\em 2-locally connected} if for any  two facets $F\neq G$ and any $i\in F-G$ and $j\in G-F$ there is a sequence of facets $F=F_0,F_1,\ldots,F_r,F_{r+1}=G$ such that for every $0\leq h \leq r-1$ one has $|F_h\cap F_{h+1}|=|F_h|-1$, $F_h\subseteq F\cup G$ and $F_h\cap \{i,j\}\neq \emptyset$.
\end{defn}

\begin{prop}\label{2ndSymbPwr}
$R/I_{\Delta}^{(2)}$ is $(S_2)$ if and only if $\Delta$ is 2-locally connected.
\end{prop}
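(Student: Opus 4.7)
The plan is to combine the polarization framework developed earlier with Theorem \ref{lc}. By Proposition \ref{polprimes}(2) every associated prime of $T/J_2$ has exactly $c$ generators, so $\Delta^{(2)}$ is automatically pure. Moreover, Proposition \ref{polcm}(3), together with the fact that polarization is a deformation by a regular sequence of linear forms (so $(S_2)$ descends from $T/J_2$ to $R/I_\Delta^{(2)}$ as well as ascends), identifies the Serre condition on both sides. Hence Theorem \ref{lc} reduces the statement to the following combinatorial equivalence: $\mathbf{\mathcal{G}}(T/J_2)$ is locally connected if and only if $\Delta$ is 2-locally connected.

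For the forward direction, I would mimic the witness-prime construction in the proof of Theorem \ref{cmsymb}. Given distinct facets $F, G \in \mathcal{F}(\Delta)$ and vertices $i \in F - G$, $j \in G - F$, set
\[
P := (x_{\ell,1} : \ell \in F^* \setminus \{j\}) + (x_{j,2}), \qquad Q := (x_{\ell,1} : \ell \in G^* \setminus \{i\}) + (x_{i,2}).
\]
Both lie in $\Ass(T/J_2)$ since $\ssi{P} = \ssi{Q} = c+1$. Local connectedness produces a path $P = P_0, P_1, \ldots, P_{r+1} = Q$ in $\mathbf{\mathcal{G}}(T/J_2)$ with $P \cap Q \subseteq P_h \subseteq P + Q$. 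The crucial feature specific to $m = 2$ is that the only variables of second index $2$ appearing in $P + Q$ are $x_{i,2}$ and $x_{j,2}$, and the constraint $\ssi{P_h} \leq c+1$ allows each $P_h$ to carry at most one of them. A short case analysis on which (if any) of these two variables sits in $P_h$ then shows that the facet $F_h$ corresponding to $\bar{P_h}$ always satisfies $F_h \cap \{i,j\} \neq \emptyset$: if neither lies in $P_h$ then $\{i,j\} \subseteq F_h$, and if $x_{i,2} \in P_h$ then no variable with first index $j$ sits in $P_h$, so $j \in F_h$ (symmetrically for $x_{j,2}$). The containment $P \cap Q \subseteq P_h$ translates into $F^* \cap G^* \subseteq F_h^*$, i.e., $F_h \subseteq F \cup G$. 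Adjacent primes differ in one generator, giving either $F_h = F_{h+1}$ or $|F_h \cap F_{h+1}| = |F_h|-1$; collapsing repetitions yields the desired 2-locally connected sequence.

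For the reverse direction, given $P, Q \in \Ass(T/J_2)$, set $F := \bar{P}^*$ and $G := \bar{Q}^*$. If $F = G$, routing through the canonical prime $(x_{\ell,1})_{\ell \in F^*}$ (with at most one adjustment step when $P$ and $Q$ carry their ``$2$''s at different positions) gives a locally connected path. If $F \neq G$, choose $i \in F - G$ and $j \in G - F$ adaptively: whenever $P$ carries $x_{\lambda,2}$ with $\lambda \in G - F$, take $j := \lambda$, and symmetrically for $Q$ and $i$. Apply 2-local connectedness of $\Delta$ to obtain facets $F = F_0, F_1, \ldots, F_{r+1} = G$ with $F_h \subseteq F \cup G$, $|F_h \cap F_{h+1}| = |F_h|-1$, and $F_h \cap \{i,j\} \neq \emptyset$. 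Lift this to a sequence of associated primes of $T/J_2$ by, at each step, placing any second-index-$2$ variable inherited from $P \cap Q$ at its prescribed position inside $F_h^*$ and filling the remaining positions of $F_h^*$ with second-index-$1$ variables; where $\lambda$ enters or leaves $F_h$, a single canonical-toggle intermediate prime absorbs the swap. The containments $F^* \cap G^* \subseteq F_h^* \subseteq F^* \cup G^*$ then yield $P \cap Q \subseteq P_h \subseteq P + Q$.

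The main obstacle is the reverse direction, whose lift of the 2-locally connected sequence must be split across several subcases according to where the (at most) single ``$2$'' of each of $P$ and $Q$ lives. The most delicate subcase is when both $P$ and $Q$ carry their ``$2$'' at the \emph{same} position $\ell^* \in F^* \cap G^*$: then $x_{\ell^*,2} \in P \cap Q$ has to persist through every intermediate prime, forcing the lift
\[
P_h := (x_{\ell,1} : \ell \in F_h^* \setminus \{\ell^*\}) + (x_{\ell^*,2}).
\]
This assignment succeeds precisely because 2-local connectedness guarantees $\ell^* \in F^* \cap G^* \subseteq F_h^*$ at every stage, so the ``$2$'' never falls off the facet; absent 2-local connectedness the intermediate $P_h$ could leave either $P + Q$ or fail to contain $P \cap Q$, and this is the point at which the combinatorial hypothesis is doing its real work.
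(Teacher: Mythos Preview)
Your plan is essentially the paper's: reduce via Theorem~\ref{lc} and polarization to comparing local connectedness of $\mathbf{\mathcal G}(T/J_2)$ with the 2-locally connected property of $\Delta$, use witness primes carrying a single second index equal to $2$ in the forward direction, and in the backward direction lift a facet path after a case split on $\ssi P,\ssi Q\in\{c,c+1\}$ and on whether $\bar P=\bar Q$. The paper proceeds in exactly this way.

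There is, however, a real gap in how you handle the containments. By definition a locally connected path satisfies only $P_h\subseteq P+Q$; it does \emph{not} force $P\cap Q\subseteq P_h$. In your forward direction you invoke the latter to conclude $F^*\cap G^*\subseteq F_h^*$ (i.e.\ $F_h\subseteq F\cup G$), and that step is unjustified: what $P_h\subseteq P+Q$ actually yields on first indices is $F_h^*\subseteq F^*\cup G^*$, i.e.\ $F\cap G\subseteq F_h$. The same confusion reappears in the backward direction: for your lift to be a locally connected path you need $P_h\subseteq P+Q$, hence $F_h^*\subseteq F^*\cup G^*$, but the facet condition $F_h\subseteq F\cup G$ only hands you the \emph{other} inclusion $F^*\cap G^*\subseteq F_h^*$, and these are genuinely different (e.g.\ $F=\{1,2,3\}$, $G=\{1,2,4\}$, $F_h=\{2,3,4\}$). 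The paper sidesteps this by immediately passing to the prime-ideal reformulation ``for all $\p\neq\q$ and $x_i\in\q-\p$, $x_j\in\p-\q$ there is a locally connected path in $\mathbf{\mathcal G}(k[\Delta])$ in which no $\p_h$ contains both $x_i$ and $x_j$,'' and then works exclusively with the inclusion $\p_h\subseteq\p+\q$ on both sides. You should do the same, and either verify that this reformulation agrees with Definition~\ref{2-loc} as stated or flag the discrepancy.
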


\begin{proof} 
By Proposition \ref{polprimes}, it suffices to prove that $k[\Delta^{(2)}]$ is $(S_2)$ if and only if $\Delta$ is 2-locally connected. \\
``$\Lra$" Clearly, $\Delta$ is 2-locally connected if and only if for any $\p\neq \q\in \Ass(k[\Delta])$ and any $x_i\in \q-\p$ and $x_j\in \p-\q$ there is a locally connected path between $\p$ and $\q$ such that no prime in the path contains both $x_i$ and $x_j$, so we prove this latter condition. Write $\p=(x_i,y_{1},\ldots,y_{c-1})$ and $\q=(x_j,z_1,\ldots,z_{c-1})$, let $P:=(x_{i,2},y_{1,1},\ldots,y_{c-1,1})$ and $Q:=(x_{j,2},z_{1,1},\ldots,z_{c-1,1})$; by Proposition \ref{polprimes} they are both in $\Ass(k[\Delta^{(2)}])$. Since $k[\Delta^{(2)}]$ is $(S_2)$, there is a locally connected path $P,P_1,\ldots,P_r,Q$. By Proposition \ref{polprimes}, $\ssi{P_h}$, i.e. the sum  of the second indices in $P_h$, is at most $c+1$ for every $P_h\in \Ass(k[\Delta^{(2)}])$, so none of the $P_h$'s contains both $x_{i,2}$ and $x_{j,2}$. Taking bars, we obtain a locally connected path $\ovl{P}=\p,\ovl{P_1},\ldots,\ovl{P_r},\ovl{Q}=\q$ in $\mathbf{\mathcal{G}}(\Delta)$ as in the above statement.

``$\Lla$" By the above equivalent characterization of 2-locally connected, it is clear that $\mathbf{\mathcal{G}}(\Delta)$ is locally connected. We prove  $\mathbf{\mathcal{G}}(\Delta^{(2)})$ is locally connected. Let $P\neq Q\in \Ass(k[\Delta^{(2)}])$, by Proposition \ref{polprimes} one has $\ssi{P}, \ssi{Q}\in \{c,c+1\}$, so either all variables in $P$ and $Q$ have second index 1, or all variables except one have second index 1, and the remaining variable has second index 2.
 
First, assume $\ovl{P}=\ovl{Q}$. If either $\ssi{P}=c$ or $\ssi{Q}=c$, then $P\cap Q$ contains $c-1$ variables, so $P,Q$ is a locally connected path. If $\ssi{P}=\ssi{Q}=c+1$, then $P\cap Q$ contains $c-2$ variables. Say $x_{i,2}\in P$ for some $i$; 
let $P_1$ be obtained from $P$ by replacing $x_{i,2}$ by $x_{i,1}$, then $P,P_1,Q$ is a locally connected path. 

Assume then $\p:=\ovl{P}\neq \ovl{Q}=:\q$, and let $\p,\p_1,\ldots,\p_r,\q$ be a locally connected path in $k[\Delta]$. If $\ssi{P}=c$ or $\ssi{Q}=c$, for every $h$ let $P_h$ be the prime with $\ovl{P_h}=\p_h$ and $\ssi{P_h}=c$. Then $P,P_1,\ldots,P_r,Q$ is a locally connected path. 
If $\ssi{P}=\ssi{Q}=c+1$, then $x_{i,2}\in P$ and $x_{j,2}\in Q$ for some $i,j$. If $i=j$, the same path connecting $\ovl{P}$ and $\ovl{Q}$ can be lifted to a path between $P$ and $Q$ where every $P_h$ in the path contains $x_{i,2}$ (and all other variables have second index 1). 
If $i\neq j$, since $\Delta$ is 2-locally connected there is a path  $\p,\p_1,\ldots,\p_r,\q$ such that each $\p_h$ contains at most one of the variables $x_i$ and $x_j$. As in the case $i=j$, this path can be lifted to a path between $P$ and $Q$, where $P_h$  contains $x_{i,2}$ or $x_{j,2}$ (and all other variables have second index 1) whenever $\p_h$ contains $x_i$ or $x_j$. This is a path because no $P_h$ contains both $x_{i,2}$ and $x_{j,2}$ so $\ssi{P_h}\leq c+1$ for every $h$, thus $P_h\in \Ass(k[\Delta^{(2)}])$ for every $h$.

\end{proof}

\section{Ordinary powers}

Our goal in this section is to prove Theorem \ref{cmordintro}, which we now recall. 

\begin{theorem}\label{main2}
	Let $\Delta$ be a pure simplicial complex. Then the following are equivalent:
	\begin{itemize}
		\item[$($a$)$] $\Delta$ is a complete intersection;
		\item[$($b$)$] $I_{\Delta}^{m}$ is Cohen--Macaulay for every $m\geq 1$;
		\item[$($c$)$] $R/I_{\Delta}^{m}$ satisfies $(S_2)$ for some $m\geq 3$. 
		\item[$($d$)$] $\Delta$ is a matroid and $I_{\Delta}$ has the K\"onig property.
	\end{itemize}	
\end{theorem}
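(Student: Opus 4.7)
The plan is to run the cycle $(a)\Rightarrow (b)\Rightarrow (c)\Rightarrow (d)\Rightarrow (a)$. The implication $(a)\Rightarrow (b)$ is classical: if $I_\Delta=(f_1,\ldots,f_c)$ is generated by a regular sequence, then $I_\Delta^m/I_\Delta^{m+1}\cong \mathrm{Sym}^m(I_\Delta/I_\Delta^2)$ is a free $R/I_\Delta$-module, so Cohen--Macaulayness of $R/I_\Delta^m$ follows inductively from the short exact sequences $0\to I_\Delta^m/I_\Delta^{m+1}\to R/I_\Delta^{m+1}\to R/I_\Delta^m\to 0$. The implication $(b)\Rightarrow (c)$ is immediate.

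The heart of the argument is $(c)\Rightarrow (d)$. Assume $R/I_\Delta^m$ satisfies $(S_2)$ for some $m\geq 3$. First, $(S_2)$ forces $I_\Delta^m$ to be unmixed; the minimal primes of $I_\Delta^m$ are the $\p_F$ for $F\in\mathcal F(\Delta)$, and since each $\p_F$ is generated by a subset of variables one has $I_\Delta^m R_{\p_F}=\p_F^m R_{\p_F}$, so the $\p_F$-primary component of $I_\Delta^m$ is exactly $\p_F^m$. Unmixedness thus forces $I_\Delta^m=\bigcap_F \p_F^m=I_\Delta^{(m)}$, so $R/I_\Delta^{(m)}$ satisfies $(S_2)$ for some $m\geq 3$, and Theorem \ref{cmsymb} yields that $\Delta$ is a matroid. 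It remains to establish the K\"onig property, namely the existence of $c$ pairwise disjoint minimal non-faces. The plan here is to push the polarization and Hochster--Huneke graph technology of Section \ref{squarefreesection} one step further: work inside the polarization $(I_\Delta^m)^{\pol}\subseteq T$, whose associated primes are controlled by Proposition \ref{polprimes}, and exploit the equality $I_\Delta^m=I_\Delta^{(m)}$ by testing it against specific monomials of $I_\Delta^{(m)}$. In particular, one produces a monomial forced into $I_\Delta^{(m)}$ by the matroid structure whose factorization in $I_\Delta^m$ must involve $c$ pairwise disjoint circuits; the hypothesis $m\geq 3$ is used exactly as in the base case of Theorem \ref{cmsymb}, to create enough ``room'' in the second index of the polarized variables for the combinatorial argument to go through.

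For $(d)\Rightarrow(a)$, suppose $\Delta$ is a matroid with $c$ pairwise disjoint circuits $V_1,\ldots,V_c$. By Remark \ref{variables} we may assume $\Delta$ has no coloops, so $V_1\cup\cdots\cup V_c=[n]$. Each $V_j$ is a circuit, so $r(V_j)=|V_j|-1$; iterated submodularity together with disjointness gives
$$
r([n])\leq\sum_{j=1}^c(|V_j|-1)=n-c,
$$
and since $r([n])=n-c$ this inequality must be an equality at every step. Equivalently, the matroid splits as a direct sum $\Delta=\bigoplus_j M|_{V_j}$ where each $M|_{V_j}$ is the uniform matroid $U_{|V_j|-1,|V_j|}$ having $V_j$ as its unique circuit. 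Therefore $I_\Delta=(\prod_{v\in V_j}x_v\,:\,j=1,\ldots,c)$ is a complete intersection in the sense of Lemma \ref{ci}, proving $(a)$.

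I expect the main obstacle to be the K\"onig step within $(c)\Rightarrow(d)$. The other three implications are either classical or follow cleanly from already established results (Theorem \ref{cmsymb} and standard matroid theory), but going from the algebraic identity $I_\Delta^m=I_\Delta^{(m)}$ to the purely combinatorial existence of $c$ pairwise disjoint minimal non-faces requires carefully engineering a combinatorial witness and running the polarized Hochster--Huneke machinery of Section \ref{squarefreesection} in the larger ring $T$, in a manner closely parallel to, but more delicate than, the analogous base case in Theorem \ref{cmsymb}.
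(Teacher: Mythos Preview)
Your cycle $(a)\Rightarrow(b)\Rightarrow(c)\Rightarrow(d)\Rightarrow(a)$ differs from the paper's logic, and the difference matters. The paper does \emph{not} prove $(c)\Rightarrow(d)$; it proves $(c)\Rightarrow(a)$ directly and obtains $(d)$ only afterwards from $(b)$ via the trivial observation $x_1\cdots x_n\in I^{(c)}=I^c$. Your proposal leaves the K\"onig step in $(c)\Rightarrow(d)$ as a vague plan (``produce a monomial in $I_\Delta^{(m)}$ whose factorization in $I_\Delta^m$ must involve $c$ pairwise disjoint circuits''), and I do not see how to execute it. From $(c)$ you get $I^m=I^{(m)}$ for \emph{one} $m\ge 3$, not for $m=c$; the natural witness $x_1\cdots x_n$ lies in $I^{(c)}$, not in $I^{(m)}$ in general, and powers $(x_1\cdots x_n)^a\in I^{(ac)}\subseteq I^{(m)}=I^m$ give a factorization into $m$ generators with no reason for $c$ of them to have disjoint supports. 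Invoking ``the polarized Hochster--Huneke machinery'' does not help here: that machinery in Section~\ref{squarefreesection} compares \emph{pairs} of associated primes via locally connected paths, whereas K\"onig is a global packing statement about $c$ circuits simultaneously. The paper's actual argument for the hard direction $(c)\Rightarrow(a)$ is quite different: after reducing to the simplified matroid (Remark~\ref{s-p}), it inducts on $c$, uses the star-configuration Lemma~\ref{stconf} for the base case $c=2$, and for $c\ge 3$ localizes at $(x_1,\dots,x_{n-1})$ to force $n=c+1$, after which a short direct computation finishes.

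Your $(d)\Rightarrow(a)$ via rank submodularity is a legitimate alternative to the paper's simplified-matroid argument, but two of the steps you state as immediate are not. First, ``coloopless implies $V_1\cup\cdots\cup V_c=[n]$'' needs justification: dually, the hyperplanes $H_j=[n]\setminus V_j$ of $\Delta^*$ satisfy $r^*\bigl(\bigcap_j H_j\bigr)\le c-c=0$ by iterated submodularity, so every element of $[n]\setminus\bigcup V_j$ is a loop of $\Delta^*$, i.e.\ a coloop of $\Delta$. Second, equality $r([n])=\sum_j(|V_j|-1)$ for disjoint $V_j$ does \emph{not} formally say ``direct sum''; you need the extra step that every basis of $\Delta^*$ meets each $V_j$ in exactly one point (a counting argument), and then basis exchange forces \emph{every} transversal of $V_1,\dots,V_c$ to be a basis of $\Delta^*$. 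Once filled in, this is a clean matroid-theoretic route to $(d)\Rightarrow(a)$, arguably more transparent than the paper's reduction to the simplified case; but it does not rescue your cycle, since the break at $(c)\Rightarrow(d)$ remains.
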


\begin{Remark}\label{Konig}
Our addition, part (d), gives a new simple test to determine when $I_\Delta$ is a complete intersection: $\Delta$ needs to be a matroid, and the product of the variables has to be in $(I_\Delta)^c$, where $c$ is the co--rank of $\Delta$.
\end{Remark}

For the proof we will need a simple fact about star configurations. To streamline the proof, we will make use of the concept of simplified matroid and cover ideals. We start by discussing these ingredients.
 \medskip
 
The {\em monomial star configuration of grade $c$ in $R$} is the Stanley--Reisner ideal of the complete matroid of rank $n-c$, or equivalently, it is the ideal generated by all squarefree monomials of degree $n-c+1$. For instance, $\m=(x_1,\ldots,x_n)$ is the monomial star configuration of grade $n$, and $(x_1x_2\cdots x_n)$ is the one of grade 1. One can easily deduce a version of the following result (for any star configuration of hypersurfaces) using \cite[Thm~4.9]{Ma20}.  However, to keep the paper as self--contained as possible, we include here a short proof. 

\begin{lem}\label{stconf}
	Let $I$ be a monomial star configuration of grade $c$. Then $I^{(m)}= I^m$ for some $m\geq 2$ if and only if $c=1$ or $c=n$. 
\end{lem}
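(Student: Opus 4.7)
The plan is to treat the ``if'' direction as an immediate observation and focus on the nontrivial direction. For $c=1$, the ideal $I=(x_1x_2\cdots x_n)$ is principal, so $I^{(m)}=I^m$ trivially. For $c=n$, the uniform matroid has rank $0$, so $I=\mathfrak{m}=(x_1,\ldots,x_n)$ has a unique minimal prime, again forcing $I^{(m)}=I^m$ for every $m$.

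For the ``only if'' direction, I would assume $2\leq c\leq n-1$ and, for each $m\geq 2$, exhibit an explicit monomial $M\in I^{(m)}\setminus I^m$. The two characterizations I plan to exploit are: a monomial $x^{\a}$ lies in $I^{(m)}=\bigcap_{|\sigma|=c}(x_i\mid i\in\sigma)^m$ if and only if the sum of the $c$ smallest exponents of $x^{\a}$ is at least $m$; whereas every minimal generator of $I^m$ has degree exactly $m(n-c+1)$, so any monomial of strictly smaller total degree cannot belong to $I^m$.

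Write $m=kc+r$ with $0\leq r<c$. My candidate monomial is
\[
M \;:=\; \prod_{i=1}^{c-r} x_i^{k}\;\cdot\;\prod_{i=c-r+1}^{n} x_i^{k+1},
\]
understood as $M=(x_1\cdots x_n)^k$ when $r=0$. A direct count shows the sum of the $c$ smallest exponents equals $(c-r)k+r(k+1)=ck+r=m$, placing $M$ in $I^{(m)}$. The total degree is $\deg M=nk+(n-c+r)$, and the inequality $\deg M<m(n-c+1)$ reduces after routine simplification to $(n-c)\bigl(k(c-1)+r-1\bigr)>0$, which holds since $n>c\geq 2$ and $kc+r=m\geq 2$ (the only borderline case, $k=0$ forcing $r=m\geq 2$, still gives $r-1\geq 1>0$).

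The main ``obstacle'' is really just bookkeeping: one must verify the inequality $(n-c)(k(c-1)+r-1)>0$ uniformly across the edge cases $k=0$ (so $m<c$) and $r=0$, but once the candidate $M$ above is in hand, both containment and non-containment follow from a short symbolic-power/degree comparison. No deeper machinery (Rees algebras, Waldschmidt constants, etc.) is needed for this combinatorial statement.
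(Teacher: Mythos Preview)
Your approach is essentially identical to the paper's: both exhibit an explicit ``balanced'' monomial in $I^{(m)}$ of degree strictly less than $m(n-c+1)=\alpha(I^m)$. The paper writes $m=qc+r$ and takes $M=(x_1\cdots x_n)^q$ when $r=0$ and $M=(x_1\cdots x_n)^q(x_1\cdots x_{n-c+r})$ when $r>0$; up to relabeling the variables, your displayed formula for $r>0$ produces the same monomial, and the degree comparison is the same computation.

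There is, however, a small inconsistency in your treatment of $r=0$. Your displayed formula does \emph{not} reduce to $(x_1\cdots x_n)^k$ when $r=0$: it gives $\prod_{i\le c}x_i^{k}\prod_{i>c}x_i^{k+1}$, of degree $nk+(n-c)$, and your reduced inequality $(n-c)\bigl(k(c-1)+r-1\bigr)>0$ then fails at $k=1$, $c=2$, $r=0$ (that is, $m=c=2$), where the expression vanishes. In fact for $c=2$, $m=2$ one checks that this monomial equals the product of two distinct degree-$(n-1)$ generators of $I$, so it genuinely lies in $I^2$. Your parenthetical replacement $M=(x_1\cdots x_n)^k$ is the correct choice for $r=0$, but then $\deg M=nk$ and the difference $m(n-c+1)-\deg M$ equals $(n-c)k(c-1)$, not $(n-c)(k(c-1)-1)$; your stated degree formula and your edge-case analysis (which only flags $k=0$) do not apply to it. The repair is trivial --- treat $r=0$ separately, exactly as the paper does --- but as written the single case $m=c=2$ slips through.
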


\begin{proof}
	First, for any ideal $J$ let $\alpha(J)$ be the smallest degree of a generator of $J$. In particular, if $I$ is an in the statement, then $\alpha(I^m)=m\alpha(I)=m(n-c+1)$. \\
``$\Lla$"	If $c=1$ or $c=n$, then $I=(x_1x_2\cdots x_n)$ or $I=\m$, so $I$ is a complete intersection, thus it is well--known that $I^m=I^{(m)}$ for all $m\geq 1$, see e.g. \cite{CN76}.
	
``$\Lra$" It suffices to show that if $2\leq c \leq n-1$, then $\alpha(I^{(m)})<\alpha(I^m)=m(n-c+1)$ for every $m\geq 2$. Take any $m\geq 2$, and write $m=qc+r$ for some $0\leq r \leq c-1$. If $r=0$, set $M:=(x_1x_2\cdots x_n)^q$, and if $r>0$, set $M:=(x_1x_2\cdots x_n)^q(x_1x_2\cdots x_{n-c+r})$. It is easily seen that $M\in \p^m$ for every monomial prime of grade $c$, i.e. $M\in \p^m$ for all $\p\in \Ass(R/I)$, i.e. $M\in I^{(m)}$. Now $\deg(M)=nq$ or $nq+(n-c+r)$, depending on whether $r=0$ or $r>0$. In either case $\alpha(I^{(m)})\leq \deg(M)<m(n-c+1)$.
	
If $r>0$ one deduces that $nq+(n-c+r)\geq m(n-c+1)=m(n-c)+m=(qc+r)(n-c)+qc+r$ if and only if $(n-c)(qc+r-q-1)\leq 0$. Since $n>c$, this is equivalent to $q(c-1)\leq 1-r$, and since $r\geq 1$ and $c-1\geq 1$ this only holds only if $q=0$ and $r=1$, that is, if $m=1$. The case $r=0$ is proved analogously.

\end{proof}

Next, we recall that the {\em cover ideal} $J(\Gamma)$ of a simplicial complex $\Gamma$ can be defined, for instance, as $I_{\Gamma^*}$, i.e. the Stanley--Reisner ideal of the naive dual of $\Gamma$. 

\begin{remark}\label{s-p}
	If $\Lambda$ is a loopless matroid, i.e. $i\in \Lambda$ for all $i\in [n]$, then its 1-skeleton is a complete $s$-partite graph, for some $s\geq c$. 
 The sets $A_1,\ldots,A_s$ forming the associated partition of $[n]$ are called the {\em parallel classes} of $\Gamma$. A matroid is {\em simplified} if its 1-skeleton is $K_n$, the complete graph on $[n]$, i.e. if there are $n$ parallel classes. A {\em simplified matroid} $^{\rm si}\Lambda$ associated to a matroid $\Lambda$ is obtained by choosing precisely one $a_i \in A_i$ for every $i=1,\ldots,s$, taking $W:=\{a_1,\ldots,a_s\}$ to be the vertex set of $^{\rm si}\Lambda$, and setting $(a_{i_1},\ldots,a_{i_t}) \in$  $^{\rm si}\Lambda$ if and only if $(a_{i_1},\ldots,a_{i_t}) \in \Lambda$. By construction, the 1-skeleton of $^{\rm si}\Lambda$ is $K_n$. 
	
If $\Lambda$ is a matroid then $^{\rm si}\Lambda$ is a matroid, and there is a faithfully flat map $\varphi:k[x_{a_1},\ldots,x_{a_t}] \lra k[x_1,\ldots,x_n]$ given by $f(x_{a_j})=\prod_{i \in A_j}x_i$ such that $\varphi(J(\Lambda))=J(^{\rm si}\Lambda)$ and $\varphi(J(\Lambda)^m)=J(^{\rm si}\Lambda)^m$ and $\varphi(J(\Lambda)^{(m)})=J(^{\rm si}\Lambda)^{(m)}$ for all $m\geq 1$ (see \cite[Rmk.~14 and Prop.~15]{CV15}, cf. \cite[Lem~3.1]{GH17}).
	
In particular,  $J(\Lambda)$ and $J(^{\rm si}\Lambda)$ have the same codimension and total Betti numbers, and one of them is Cohen--Macaulay, $(S_2)$ or complete intersection, resp. if and only if the other one is.	For instance, $J(\Lambda)$ is a complete intersection if and only if $J(^{\rm si}\Lambda)$ is a prime ideal.
	
Additionally, $I_{\Delta}$ is a complete intersection if and only if $\Delta$ is a matroid and has precisely $c$ parallel classes \cite[Rmk.~33]{CV15}.

\end{remark}

Recall that a squarefree monomial ideal $I=I_\Delta$ has the {\em K\"onig property} if it contains a monomial complete intersection of grade $\dim(\Delta^{\vee})+1$. This notion appears, for instance, in a long--standing conjecture raised by Conforti and Cornuejols in Combinatorial Optimization theory \cite[Conj.~1.6]{Co01}, which translates to the following algebraic conjecture: {\em If $I=I_\Delta$ is a squarefree monomial ideal, then $I^m=I^{(m)}$ for all $m\geq 1$ if and only if $I$ and all its minors have the K\"onig property}. (see e.g. \cite[Conj~4.7]{GRV}) Recall that a {\em minor} of $I$ is any ideal obtained from $I$ by setting a subset of the variables equal to 0 and another subset of the variables equal to 1. 

It is easily seen that, for any matroid $\Lambda$, the ideal $J(\Lambda)$ has the K\"onig property if and only $J(^{\rm si}\Lambda)$ does. We are now ready to prove Theorem \ref{main2}.

\begin{proof}[Proof of Theorem \ref{main2}] Without loss of generality we may assume $\Delta$ has no cone points; let $\dim(\Delta)=n-c-1$, and let $I:=I_{\Delta}$, so ${\rm grade}(I_\Delta)=c$. 
	It is well-known that (a)  $\Lra$ (b) \cite{CN76} and clearly (b) $\Lra$ (c). 
 
	(c) $\Lra$ (a). Since $R/I^m$ satisfies $(S_2)$, then $I^m$ is unmixed, so $I^{(m)}= I^m$. Since $R/I^{(m)}$ satisfies $(S_2)$, Theorem \ref{cmsymb} implies $\Delta$ is a matroid, and then so is its dual $\Delta^*$. Since $\Delta$ has no cone points, then $\Delta$ is a coloopless matroid $\Delta$, i.e. $\Delta^*$ is loopless. Then, by Remark \ref{s-p}, all the properties of (c) and (a) are preserved by passing from $\Delta^*$ to $^{\rm si}(\Delta^*)$, so we may assume the 1-skeleton of $\Delta^*$ is $K_n$. 
	
	We prove the statement by induction on $c\geq 2$. 	
	If $c=2$ then $\Delta^*=K_n$ so $I_{\Delta}$ is, in particular, a star configuration. Then since $I_{\Delta}^m$ is $(S_2)$ for some $m\geq 3$, we have $I^{m}=I^{(m)}$, so $I=\m$ by Remark \ref{stconf}.
	Assume $c\geq 3$, let $n$ be the smallest positive integer allowing a counterexample $\Delta$ on vertex set $[n$], i.e. the smallest $n\in \ZZ_+$ admitting a pure simplicial complex $\Delta$ on $[n]$ for which $I_{\Delta}^m$ is Cohen-Macaulay for some $m\geq 3$ and $I:=I_{\Delta}$ is not a complete intersection.

Let $P=(x_1,\ldots,x_{n-1})$, then in the regular local ring $R_P$ we have $I_P=(M_1/1,\ldots,M_r/1)$ where $M_1,\ldots,M_r$ are monomials in $S:=k[x_1,\ldots,x_{n-1}]$. It is well-known (e.g. it follows from the theory of *local ring as developed in \cite[Section~1.5]{BH93}) that if we set $\wdt{I}:=(M_1,\ldots,M_r)\subseteq S$, then $\wdt{I}$ is unmixed, Cohen--Macaulay, $(S_2)$, or a complete intersection if and only if $I_P$ is.

Since in our case, $I$ is an unmixed squarefree monomial ideal, then so is $I_P$ and thus, by the above, so is $\wdt{I}$. Note that $\wdt{I}=I:x_n$ or, equivalently, $\wdt{I}=I_{\Lambda}$ where $\Lambda:={\rm Del}_{\Delta}(\{n\})$ denotes the matroid on $[n-1]$ obtained by deleting $\{n\}$. Again by the above, since $I^m$ is Cohen-Macaulay, so is $\wdt{I^m}=(\wdt{I})^m$, which implies in particular that $(\wdt{I})^m=(\wdt{I})^{(m)}$ is Cohen--Macaulay. By minimality of $n$, the Cohen-Macaulay property of $(\wdt{I})^m$ implies that  $\wdt{I}$ is a complete intersection. Then, by Remark \ref{s-p}, $\Lambda$ has only $c$ parallel classes. Since the 1-skeleton of $\Lambda$ is obtained from the one of $\Delta$ by removing only the vertex $v_n$, and all edges passing through it, it follows that there are precisely $c+1$ parallel classes in $\Delta$. Therefore $n=c+1$. So if one sets $Q_i=(x_j\,\mid\, j\neq i)$ then it is clear that $\Ass(k[\Delta])\subseteq \{Q_1,\ldots,Q_{c+1}\}$. This inequality is strict for, if not, then $I=I_{\Delta}$ is a star configuration, thus Lemma \ref{stconf} would contradict the assumption. 
	
Without loss of generality we may  then assume $Q_{1}\notin \Ass(k[\Delta])$, and then $x_{1}\in \bigcap_{i=2}^{c+1} Q_i \subseteq I$. We can write $I_{\Delta}=(x_{1}, I_1)$ for an ideal $I_1$ of ${\rm grade}(J)=c-1$ extended from 
	$R_1:=k[x_2,\ldots,x_{c+1}]$. Since $x_1\notin R_1$ and $\Delta$ is a matroid, then $I_1$ is the Stanley--Reisner ring of a matroid in $c$ variables. Now  
	{\small	$$I^m=(I_1,x_1)^m =(I_1^m, I_1^{m-1}x_1,\ldots, I_1x_1^{m-1}, x_1^m) \quad \text{ and } \quad  I^{(m)} =(I_1^{(m)}, I_1^{(m-1)}x_1,\ldots, I_1x_1^{m-1}, x_1^m)$$}
	where the first equality follows from the definition of ordinary power, while the second one is proved, for instance, in \cite[Thm.~7.8]{BCG+} or \cite[Thm~3.4]{HNTT}. 
	So, if we endow $R=k[x_1,\ldots,x_{c+1}]$ with the grading $\deg(x_1)=1$ and $\deg(x_i)=0$ for every $i\geq 2$, then the two $R$-ideals  $I^m$ and $I^{(m)}$ are graded, and since they are equal, their components of degree 0 must be equal, so $I_1^m=I_1^{(m)}$. Since $I_1$ is the Stanley-Reisner ring of a matroid, it follows by Theorem \ref{cmsymb} that $I_1^{(m)}=I_1^m$ is Cohen--Macaulay and ${\rm grade}(I_1)=c-1$. By inductive hypothesis, $I_1$ is a complete intersection, and then so is $I=(x_1,I_1)$.
	
	(d) $\Lra$ (a). Since all properties of (d) and (a) are preserved by passing from $\Delta^*$ to $^{\rm si}(\Delta^*)$, then we may assume $\Delta^*$ is simplified. Let $M_1,\ldots,M_{c}\in I$ be a regular sequence of squarefree monomials. After possibly replacing $M_c$ by a squarefree monomial multiple of $M_c$, we may further assume $M_1M_2 \cdots M_{c} = x_1\cdots x_n$. 
	
	Since the product of the $M_j$'s is a squarefree monomial, then $\supp(M_1),\ldots,\supp(M_c)$ is a partition of $[n]$. Since each $M_i\in I$, then $\supp(M_i)\cap \p\neq \emptyset$ for every $\p\in \Ass(k[\Delta])$. Since $\p$ contains precisely $c$ variables, and $\supp(M_1),\ldots,\supp(M_c)$ are disjoint and they all meet $\p$, then $|\supp(M_i)\cap \p|=1$ for every $i$. Now, we prove that $|\supp(M_i)|=1$ for all $i$, which shows that $I=\m$, and in particular is a complete intersection. 
	Assume we have two distinct variables $x,y\in \supp(M_i)$. By Remark \ref{s-p}, since $\Delta^*$ is simplified then its 1-skeleton is $K_n$, and thus there exists a prime $\p\in \Ass(k[\Delta])$ containing $x$ and $y$, so $|\supp(M_i)\cap \p|\geq 2$, contradicting the above.

	(b) $\Lra$ (d). Surely $x_1x_2\cdots x_n\in \p^c$ for every monomial prime $\p$ of grade $c$; in particular, $x_1\cdots x_n \in I^{(c)}=I^c$, i.e. there exist monomials $M_1,\ldots,M_c\in I$ such that $M_1\cdots M_c = x_1\cdots x_n$. Since $x_1x_2\cdots x_n$ is squarefree, then $M_1,\ldots,M_c$ form a regular sequence of length $c={\rm ht}(I)$ in $I$, so $I$ has the K\"onig property. 
\end{proof}

\section{Regularity of symbolic powers}

In this section we provide an elementary proof for Theorem \ref{regintro}, i.e.  the formula, first found by Minh and Trung in \cite{MT17}, calculating the regularity of the symbolic powers of Stanley--Reisner ideals of matroids.

Consistently with \cite{MT17}, we define ${\rm core}(\Delta)$ to be the simplicial complex obtained by removing from $\Delta$ the cone points of $\Delta$. When $\Delta$ is a matroid, then ${\rm core}(\Delta)$ is the coloopless matroid obtained by removing all coloops of $\Delta$. 
\begin{theorem}\label{reg}
	Let $\Delta$ be a matroid, then $\reg(I_{\Delta}^{(m)}) = (m-1)c(\Delta) + r({\rm core}(\Delta)) + 1$.
\end{theorem}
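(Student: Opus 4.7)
\medskip

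\noindent\textbf{Plan.} The approach is to reduce the regularity computation to a projective dimension computation via polarization and Terai's duality, and then to exploit a linear quotients structure on $J_m^{\vee}$ afforded by the weakly polymatroidal property established in the proof of Theorem \ref{cmsymb}.

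Since standard polarization preserves all graded Betti numbers, one has $\reg(I_\Delta^{(m)}) = \reg(J_m)$. By Theorem \ref{cmsymb}, $T/J_m$ is Cohen--Macaulay, so the Eagon--Reiner theorem implies that $J_m^{\vee}$ has a linear resolution; combined with Terai's formula this gives
\[
\reg(I_\Delta^{(m)}) \;=\; \reg(J_m) \;=\; \pd(T/J_m^{\vee}).
\]
Hence the theorem reduces to showing $\pd(T/J_m^{\vee}) = (m-1)c(\Delta) + r({\rm core}(\Delta)) + 1$.

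Next, I would use the total order on $G(J_m^{\vee})$ fixed in Notation \ref{J_m} (which is compatible with the weakly polymatroidal ordering from the proof of Theorem \ref{cmsymb}) to show that $J_m^{\vee}$ has linear quotients, i.e. that each colon ideal $C_M$ is generated by variables. This is the content of the auxiliary Lemma \ref{C_M}. Granting this, a standard iterated mapping cone argument for linear quotients yields
\[
\pd(T/J_m^{\vee}) \;=\; 1 + \max_{M \in G(J_m^{\vee})} \mu(C_M),
\]
so the remaining task is to identify this maximum with $(m-1)c(\Delta) + r({\rm core}(\Delta))$.

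For the combinatorial identification, a minimal generator $M \in G(J_m^{\vee})$ corresponds via Proposition \ref{polprimes} to an associated prime $\p = (x_{i_1,a_1},\ldots,x_{i_c,a_c})$ of $T/J_m$, and the minimal generators of $C_M$ are in bijection with lex-earlier single-variable replacements in $\p$. The extremal $M$ should correspond to a prime $\p$ with $\ssi{\p} = c+m-1$ whose underlying prime $\ovl{\p}$ is a basis of $\Delta^*$ containing every coloop of $\Delta^*$, with the surplus $m-1$ of second-index weight concentrated on first indices lying in a maximum circuit $C$ of $\Delta$. Each of the $c(\Delta) = |C|$ such positions should then contribute $(m-1)$ lex-earlier neighbors (one per allowable decrease in its second index), while each of the $r({\rm core}(\Delta))$ non-coloop first indices of $\ovl{\p}$ should contribute one further neighbor through bottom-level matroid exchange in $\Delta^*$; the total is exactly $(m-1)c(\Delta) + r({\rm core}(\Delta))$.

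\medskip

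\noindent\textbf{Main obstacle.} The crux is the last step: verifying, under the lex order of Notation \ref{J_m}, that the circuit-based exchanges yield exactly $c(\Delta)$ lex-earlier neighbors at each nontrivial second-index level, and that no alternative choice of $M$ exceeds this count. This is precisely where the ``very delicate choices of orderings'' alluded to in the introduction enter: the interaction between circumference, coloops, and the second-index filtration must be tracked with precision, which is why the proof is expected to factor through Lemma \ref{C_M} rather than attempting the count directly on $\Delta$.
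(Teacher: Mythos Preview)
Your global strategy is correct and matches the paper exactly: polarize, apply Terai's duality, use the linear quotients structure on $J_m^{\vee}$ coming from weak polymatroidality (Theorem \ref{cmsymb}), and identify $\pd(T/J_m^{\vee})-1$ with $\max_M \grade(C_M)$.

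Two inaccuracies in the middle of the proposal are worth flagging. First, Lemma \ref{C_M} is not what establishes that each $C_M$ is generated by variables; that is a consequence of the weakly polymatroidal structure proved in Theorem \ref{cmsymb}. Lemma \ref{C_M} instead produces, for any circuit $U$, a \emph{specific} $M$ (and a specific ordering of the variables of $R$) with $\grade(C_M)=(m-1)|U|+(n-c)$; it is the lower bound device. Second, your picture of the extremal $M$ is off. In the paper's construction only \emph{one} variable of $M$ carries elevated second index (equal to $m$); the circuit $U$ enters not through which positions carry weight but through the chosen \emph{order} of the $R$-variables, which forces the $(c-1)$ first indices with second index $1$ to lie in the hyperplane $[n]-U$ of $\Delta^*$. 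The term $n-c$ in the count comes from level-$1$ swaps with the $n-c$ vertices outside the basis, not from ``non-coloop first indices of $\ovl{\p}$'' (a basis of $\Delta^*$ has $c$ elements, all of them non-coloops of $\Delta$, so that phrase does not give $r({\rm core}(\Delta))$).

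The genuine gap is the upper bound, which you acknowledge as the obstacle but attribute entirely to Lemma \ref{C_M}. Lemma \ref{C_M} says nothing about other $M$'s. The paper handles $\max_M \grade(C_M)\le (m-1)c(\Delta)+(n-c)$ by a separate induction on $m$: for a general $M$ one strips off the level-$1$ part (contributing at most $n-d$ variables, where $d$ is the number of second indices equal to $1$), passes to the deletion $\Lambda$ of $\Delta$ at those $d$ first indices with exponent shifted down by one, and applies the inductive hypothesis to $M'\in G((J_{m'}^{\Lambda})^{\vee})$ with $m'=m-(c-d)$. Closing the induction requires the matroid inequality of Lemma \ref{c(Delta)}, namely $c(\Delta)\ge (n-j)/(c-j)$ for all $0\le j\le c-2$, which you do not mention; without it the recursive estimate $(n-d)+(m'-1)\,\omega(I_\Lambda)+(n-c)$ cannot be pushed back up to $(m-1)\,c(\Delta)+(n-c)$.
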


We first prove a basic fact about matroids. Recall that the {\em circumference} $c(\Delta)$ of a matroid $\Delta$ is the maximum size of a circuit of $\Delta$.
\begin{lem}\label{c(Delta)}
	Let $\Delta$ be a coloopless matroid of rank $r(\Delta)=n-c$. Then, for all $0\leq j \leq c-2$
	$$
	c(\Delta) \geq \frac{n-j}{c-j}.
	$$
\end{lem}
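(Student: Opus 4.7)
The plan is to produce $c$ circuits of $\Delta$ of size at most $c(\Delta)$ whose union is $[n]$; the case $j=0$ then follows immediately from $n \leq \sum|C_i| \leq c\cdot c(\Delta)$, and the remaining cases will follow by induction on $c$.

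Fix a basis $B^*\in\mathcal{F}(\Delta)$ and write $[n]\setminus B^* = \{y_1,\ldots,y_c\}$. For each $i$, let $C_i$ be the fundamental circuit of $y_i$ with respect to $B^*$, i.e., the unique circuit of $\Delta$ contained in $B^*\cup\{y_i\}$. Each $C_i$ satisfies $y_i\in C_i$, $y_k\notin C_i$ for $k\neq i$, and $|C_i|\leq c(\Delta)$.

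The key structural claim is $\bigcup_{i=1}^c C_i = [n]$, and this is where the coloopless hypothesis enters. The inclusion $\{y_1,\ldots,y_c\}\subseteq \bigcup_i C_i$ is immediate from $y_i\in C_i$. For $z\in B^*$, suppose $z$ lies in no $C_i$; then $(B^*\setminus\{z\})\cup\{y_i\}$ is dependent for every $i$, so $B^*$ is the unique basis of $\Delta$ containing $B^*\setminus\{z\}$. Because $z$ is not a coloop, there is some basis $B'$ with $z\notin B'$; applying the strong basis exchange property to $B^*$, $B'$, and $z\in B^*\setminus B'$ produces some $y\in B'\setminus B^*$---necessarily one of the $y_i$'s---such that $(B^*\setminus\{z\})\cup\{y\}$ is a basis, a contradiction. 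This settles $j=0$.

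For the cases $1\leq j\leq c-2$, I would induct on $c$. The inductive step amounts to finding an element $x\in[n]$ for which $\Delta\setminus x$ is again a coloopless matroid, now on $n-1$ elements and of corank $c-1$; the inductive hypothesis applied to $\Delta\setminus x$ at index $j-1$ would then supply a circuit of $\Delta\setminus x$ (hence of $\Delta$) of size at least $\frac{(n-1)-(j-1)}{(c-1)-(j-1)}=\frac{n-j}{c-j}$. The main obstacle is the existence of such an $x$, because $\Delta\setminus x$ acquires a coloop $y$ precisely when $\{x,y\}$ is a two-element cocircuit of $\Delta$ (equivalently, a two-element circuit of the dual matroid $\Delta^*$); so one must select $x$ outside every such pair. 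I expect to handle this by first passing to an appropriate simplification of $\Delta^*$, so as to eliminate the obstructing two-element cocircuits while keeping track of how the relevant quantities $c(\Delta)$, $n$, and $c$ transform, allowing the induction to carry through cleanly.
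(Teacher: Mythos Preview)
Your $j=0$ argument is correct and more elementary than the paper's. The paper invokes the minimal free resolution of $R/I_\Delta$ (the coloopless hypothesis forces the top graded shift to equal $n$, from which one extracts $c$ minimal generators $u_1,\dots,u_c$ with $\operatorname{lcm}(u_i)=x_1\cdots x_n$), whereas you obtain a covering of $[n]$ by $c$ circuits purely from fundamental circuits and one basis exchange. For $j\ge 1$ both arguments then attempt to pass to a coloopless matroid on $[n-1]$ of corank $c-1$: once unwound, the paper's auxiliary complex $\Gamma=(\Delta+n)/n$ is exactly the deletion $\Delta\setminus n$ you consider (its facets are the bases of $\Delta$ avoiding $n$), and the paper simply asserts that $\Gamma$ is coloopless.

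The gap you flagged is genuine and, in fact, cannot be closed: the inequality fails for $j\ge 1$. Take $\Delta=U_{1,2}\oplus U_{1,2}\oplus U_{1,2}$ on $[6]$, i.e.\ $I_\Delta=(x_1x_2,\,x_3x_4,\,x_5x_6)$. This matroid is coloopless with $n=6$, $c=3$, and $c(\Delta)=2$, yet for $j=1$ the lemma would demand $2\ge 5/2$. Here every element lies in a two-element cocircuit, so no deletion $\Delta\setminus x$ is coloopless---precisely the obstruction you isolated---and the paper's $\Gamma=\Delta\setminus 6$ acquires $5$ as a coloop, contradicting its asserted property. Your proposed detour through the simplification of $\Delta^*$ therefore cannot repair the argument, because the target statement itself is false; what survives is the $j=0$ bound $c(\Delta)\ge n/c$, which is all a circuit-covering argument can yield in general.
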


\begin{proof}
	
	First assume $j=0$. Since $\Delta$ is a matroid, then it is a Cohen--Macaulay simplicial complex, so $\pd(R/I_\Delta)={\rm grade}(I_\Delta)=c$; additionally, since $\Delta$ is coloopless, it is well-known (e.g. \cite[Section~7]{St77}) that the last graded shift in a minimal graded free resolution $\mathbb F_\bullet$ of $R/I_{\Delta}$ is $n$. Since $\mathbb F_\bullet$ can be obtained by properly trimming Taylor's resolution of $R/I_\Delta$, then the above implies the existence of minimal monomial generators $u_1,\ldots,u_c$ of $I_{\Delta}$ with ${\rm lcm}(u_1,\ldots,u_c)=x_1\cdots x_n$. Then $x_1\cdots x_n$ divides $u_1\cdots u_c$, so
	$$
	n\leq \deg(u_1)+\ldots +\deg(u_c)\leq c\cdot c(\Delta).
	$$
	We can now prove the statement by induction on $c={\rm grade}(I_\Delta)\geq 2$. If $c=2$, then $j=0$, so the statement holds by the above. 
	
	If $j\geq 1$, let $u_1,\ldots,u_{c-1}\in G(I_{\Delta})$. If ${\rm lcm}(u_1,\ldots,u_{c-1})=x_1\cdots x_n$, then, as above, we are done. Without loss of generality, we may then assume $x_n\notin {\rm supp}(u_i)$ for any $1\leq i \leq c-1$. Let $\Delta +n$ be obtained by adding $n$ to every facet of $\Delta$
	not containing $n$. Since $\Delta +n$ is the dual of the contraction of $\Delta^*$ at $n$, which is a matroid, then also $\Delta+n$ is a matroid, having $n$ as the only cone point. Let $\Gamma$ be the contraction of $\Delta + n $ at $n$. Then $\Gamma$  is a coloopless matroid on $[n-1]$ with $c(\Delta)\geq c(\Gamma)$ and $\dim(\Gamma)=\dim(\Delta)=n-c-1$ . 
	In particular,  ${\rm grade}(I_{\Gamma}))={\rm grade}(I_\Delta)-1 = c-1$. Thus
	$$c(\Delta)\geq c(\Gamma) \geq \frac{(n-1-(j-1))}{c-1-(j-1)}=\frac{n-j}{c-j},$$ 
	where the rightmost inequality follows by induction.

\end{proof}

Next, we need the following crucial lemma.  Recall that the ideals $J_m\subseteq T$ and $C_M$ in the statement are defined in Notation \ref{J_m}. 
\begin{lem}\label{C_M}
	Let $\Delta$ be a coloopless matroid, i.e. $\Delta$ has no cone points, and let $m\in \ZZ_+$. For any circuit $U$ of $\Delta$, there exists $M\in G((J_m)^\vee)$ and an order of the variables of $R=k[x_1,\ldots,x_n]\supseteq I_\Delta$ such that, using the orders defined Notation \ref{J_m}, one has  
	$$
	{\rm grade}(C_M)=(m-1)|U| + (n-c).
	$$
\end{lem}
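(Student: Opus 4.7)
The plan is to construct an explicit $M\in G((J_m)^\vee)$ and an ordering of $[n]$ so that $C_M$, after absorbing non-minimal generators, is generated by exactly $(m-1)|U|+(n-c)$ variables of $T$. First I will use the coloopless hypothesis: because $U-\{u_1\}$ is independent and any $v\in[n]-U$ is not a coloop, I can extend $U-\{u_1\}$ to a basis $F$ of $\Delta$ avoiding $v$, so that $F^*\supseteq\{u_1,v\}$. I will order $[n]$ by placing the circuit elements first with $u_1$ the largest among them, followed by the non-circuit elements with $v$ last; combined with the variable order of Notation~\ref{J_m}, this makes $x_{v,m}$ the largest variable of $T$ and $x_{u_1,1}$ the largest level-$1$ variable indexed by a circuit element. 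I will then set
\[
M\;:=\;x_{u_1,1}\cdot x_{v,m}\cdot\prod_{w\in F^*-\{u_1,v\}}x_{w,1};
\]
the prime $\p$ underlying $M$ satisfies $\ovl{\p}=\p_F\in\Ass(k[\Delta])$ and $\ssi{\p}=m+c-1$, so $M\in G((J_m)^\vee)$ by Proposition~\ref{polprimes}.

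The heart of the argument is to identify the singleton (variable) generators of $C_M$: these correspond bijectively to primes $\p'\in\Ass(T/J_m)$ that share $c-1$ variables with $\p$ and satisfy $M_{\p'}<M$ in Lex. Matroid exchange applied to $F$ classifies such neighbors by which variable of $\p$ is swapped out, together with the element of the corresponding fundamental circuit of $F$ that is swapped in. The bound $\ssi{\p'}\le c+m-1$ from Proposition~\ref{polprimes} forces the new second index to be $1$ when the swapped-out variable has second index $1$, but allows \emph{all} values $a\in[m]$ when $x_{v,m}$ is swapped out; there is also a ``level-change'' family $x_{v,m}\mapsto x_{v,a}$ for $a\in[m-1]$. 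The chosen ordering makes $M_{\p'}<M$ automatic in every relevant case, which is precisely why $u_1$ is placed last among circuit elements and $v$ last overall. Totaling the distinct singletons, while using matroid combinatorics to control overlaps between the $x_{u_1,1}$-swap family (whose fundamental circuit at $u_1$ is $U$ itself), the $x_{v,m}$-swap family, the level-change family, and the $x_{w,1}$-swap families for $w\in F^*-\{u_1,v\}$, will yield the lower bound $\grade(C_M)\ge(m-1)|U|+(n-c)$.

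For the matching upper bound I will exhibit a monomial prime $P$ of height exactly $(m-1)|U|+(n-c)$ containing $C_M$: $P$ is the ideal generated by the singletons produced above, and a short case analysis based on which variable of $\p$ is swapped in each generator $N/\gcd(N,M)$ of $C_M$ shows that every generator meets $P$, again via the $\ssi$-constraint of Proposition~\ref{polprimes}. The most delicate step will be the $x_{v,m}$-swap family: the Lex ordering together with $\ssi(\p')\le c+m-1$ permits the replacement $x_{i,a}$ to have \emph{any} level $a\in[m]$, producing many candidate singletons, some of which coincide with those from the $x_{u_1,1}$-swap family. Pinning down the size of this overlap, together with any analogous overlap from the $x_{w,1}$-swaps, is where the coloopless hypothesis and the specific choice of $v$ and $F$ must be used carefully to make the tally land on the target $(m-1)|U|+(n-c)$.
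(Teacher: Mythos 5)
Your overall strategy (pick a generator $M$ of $(J_m)^\vee$ all of whose variables have second index $1$ except one of second index $m$, then count the variable generators of $C_M$ via matroid exchange on $\Ass(k[\Delta])$ subject to the constraint $\ssi{\p'}\le c+m-1$) is the right one, and your bookkeeping of which swaps can raise the second index is correct: since $\ssi{\p_M}$ is already maximal, a variable $x_{j,b}$ with $b\ge 2$ can enter $C_M$ only by replacing the unique second-index-$m$ variable of $M$. But this is exactly where your construction goes wrong: you place that variable at $x_{v,m}$ with $v\notin U$, and keep $u_1\in U$ at level $1$. The set of admissible replacements for the $v$-slot is $\{j\mid (F^*-\{v\})\cup\{j\}\in\F(\Delta^*)\}=[n]-\operatorname{cl}_{\Delta^*}(F^*-\{v\})$, i.e.\ the fundamental cocircuit of $v$ with respect to the basis $F^*$ of $\Delta^*$. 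That set is a circuit of $\Delta$ \emph{containing $v$ and not containing $u_1$} (since $u_1\in F^*-\{v\}$ lies in the closure), so it is a circuit $U'\neq U$, and your tally comes out to $(m-1)|U'|+(n-c)$, not $(m-1)|U|+(n-c)$. Concretely: let $\Delta^*$ be the rank-$2$ matroid on $[4]$ with $3,4$ parallel, so $\F(\Delta^*)=\{12,13,14,23,24\}$ and the circuits of $\Delta$ are $\{1,2\}$, $\{1,3,4\}$, $\{2,3,4\}$. For $U=\{1,2\}$, $u_1=1$, $v=3$, your recipe forces $F=\{2,4\}$, $F^*=\{1,3\}$, $M=x_{1,1}x_{3,m}$, and the level-$\ge 2$ part of $C_M$ is indexed by $U'=\{2,3,4\}$, giving grade $3(m-1)+2$ instead of the required $2(m-1)+2$. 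No choice of $v\notin U$ can repair this, because $\operatorname{cl}_{\Delta^*}(F^*-\{v\})$ always contains $u_1$ and hence is never the hyperplane $[n]-U$.

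The correct placement is the opposite of yours: the second-index-$m$ variable must sit \emph{on} the circuit $U$, and the remaining $c-1$ level-one variables of $M$ must form an independent set of $\Delta^*$ spanning the hyperplane $H:=[n]-U$. That is, choose $I\subseteq H$ independent in $\Delta^*$ of rank $c-1$ (possible since $H$ is a hyperplane of $\Delta^*$), extend to a basis $B=I\cup\{u\}$, note $u\notin\operatorname{cl}_{\Delta^*}(I)=H$ so $u\in U$, and set $M:=x_{u,m}\prod_{w\in I}x_{w,1}$. Then the replacements for the $m$-slot are indexed exactly by $[n]-H=U$, which is what produces the $(m-1)|U|$ term. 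Two further points to watch once the construction is fixed: (i) the order on $[n]$ should put the elements \emph{outside} $B$ first, then $I$, then $u$ last, so that every level-one swap $x_{j,1}M/x_{i,a}$ with $j\notin B$ is automatically Lex-smaller than $M$ (your order, which interleaves circuit and non-circuit elements, does not guarantee this for swaps into the $u_1$-slot); and (ii) the "delicate tally of overlaps" you defer to the end is not actually needed — with the correct $M$, the variable generators of $C_M$ are exactly $\{x_{j,1}\mid j\notin I\}\cup\{x_{j,b}\mid j\in U,\ 2\le b\le m\}-\{x_{u,m}\}$, a disjoint, explicitly enumerable set of size $(n-c+1)+(m-1)|U|-1$.
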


While the proof of this crucial equality is elementary, it relies on two delicate ingredients. One of them is a very careful choice on the order of the variables of $R$, which heavily depends on $U$. In fact, the order we illustrate in the proof seems to be the only one for which ${\rm grade}(C_M)$ is precisely $(m-1)|U|+(n-c)$. The second one is an explicit detailed description of all variables in the colon ideal $C_M$. Under our specific order of the variables, we show that the variables in $C_M$ with second index larger than 1 are precisely the ones whose first index is in the complement of the circuit $U$.

\begin{proof}
	Let $H:=U^*=[n]-U$. Since $U$ is a circuit, then $H$ is a hyperplane of $\Delta^*$.  (e.g. \cite[Prop.~2.16]{Ox92}.)
	We relabel the vertices of $[n]$ -- and, thus, the variables of $R$ -- as follows.
	Fix one independent set $I$ of $\Delta^*$ inside $H$ of maximal rank, i.e. of rank $c-1$, and label its elements as  $n-c+1,\ldots,n-1$. Then, fix any basis $B$ containing $I$. Notice that $B$ is obtained by adding one element to $I$; label such element as $n$. Finally choose any order on the remaining $n-c$ elements and label them as $1,\ldots,n-c$.
	
	Let $\p_B:=(x_{n-c+1},\ldots,x_n)$. Let $M:=x_{n-c+1,1}x_{n-c+2,1}\cdots x_{n-1,1}x_{n,m}$. By the above, $\p_B\in \Ass(k[\Delta])$, so $\p_M:=(x_{n-c+1,1},x_{n-c+2,1},\ldots, x_{n-1,1},x_{n,m})\in \Ass(k[\Delta^{(m)}])$ and then $M\in G((J_m)^\vee)$, as needed. \medskip

	By the proof of Theorem \ref{cmsymb} $(1) \Lra (2)$, the colon ideal $C_M$ is generated by a subset of the variables of $T$, which we now describe. 
	We first determine all variables with second index 1 in $C_M$. Clearly, $x_{n,1}\in C_M$, because we can replace $x_{n,m}$ in $M$ with $x_{n,1}$, i.e. we can write $x_{n,1}M=x_{n,m}M'$ and the monomial $M':=x_{n,1}M/x_{n,m}=x_{n-c+1,1}x_{n-c+2,1}\cdots x_{n-1,1}x_{n,1}$ is clearly in $G((J_m)^{\vee})$ and $M'<M$. Next, since $\Delta^*$ has no coloops, then for every variable $x_j$ with $1\leq j \leq n-c-1$, there is a prime ideal $\p_j\in \Ass(k[\Delta])$  containing $x_j$. Since $\Delta^*$ is a matroid, then $\p_B- (x_{n-h}) + (x_j) \in \Ass(k[\Delta])$ for some $0\leq h \leq c-1$. 
	
	If $h=0$, then the monomial $N:=x_{j,1}x_{i_1,1}x_{i_2,1}\cdots x_{n-1,1}=x_{n,m}M'$ where $M':=(x_{j,1}M)/x_{n,m}$, and if $1\leq h \leq c-1$, then $N:=x_{j,1}M=x_{n-h,1}M'$ where $M':=(x_{j,1}M)/x_{n-j,1}$. In either case,  $M'\in G((J_m)^{\vee})$ and $M'<M$, so $x_{j,1}\in C_M$ for all $1\leq j \leq n-c$. It follows that among all variables whose second index is 1, the ones in $C_M$ are precisely the following $n-c+1$ variables: $$\{x_{j,1}\,\mid\,1\leq j \leq n-c \text{ or }j=n\}.$$
	
	Next, consider the variables $x_{j,b}$ with second index $b$ for some $2\leq b \leq m$. Since the second index of $x_{j,b}$ is $>1$, and since $\ssi{M}=\ssi{\p_M}=m+c-1$, the variable $x_{j,b}$ can only replace $x_{n,m}$, i.e. it suffices to identify all monomials in $G((J_m)^{\vee})$ of the form $x_{n-c+1,1}x_{n-c+2,1}\cdots x_{n-1,1}x_{j,b}$ for some $j$. By Alexander duality, they correspond to the prime ideals of the form $(x_{n-c+1,1},\ldots,x_{n-1,1},x_{j,b})$ in $\Ass(k[\Delta^{(m)}])$. So, by Proposition \ref{polprimes}, we need to find all primes in $\Ass(k[\Delta])$ of the form $(x_{n-c+1},\ldots,x_{n-1},x_j)$. Since, by construction, $\{n-c+1,\ldots,n-1\}$ is an independent set in $\Delta^*$ of rank $c-1$, then $(x_{n-c+1},\ldots,x_{n-1},x_j)\in \Ass(k[\Delta])$ if and only if the rank of $\{n-c+1,\ldots,n-1,j\}$ in $\Delta^*$ is $c$, which happens if and only if $j\notin H$, so if and only if $j\in U$.  
	
	Therefore, for all $2\leq b\leq m$ we have $(x_{n-c+1,1},\ldots,x_{n-1,1},x_{j,b})\in \Ass(k[\Delta^{(m)}])$ if and only if $j\in U$. We only need to be aware that if $j=n$ and $b=m$, then this ideal is just $\p_M$, so $x_{n,m}\notin C_M$. Consequently,
	$$
	C_M = (x_{j,1}\,\mid\,j\notin\{n-c+1,\ldots,n-1\}) + (x_{j,b}\,\mid\,j\in U \text{ and }2\leq b \leq m) - (x_{n,m}),
	$$
	and, therefore, ${\rm grade}(C_M) = (n-c + 1) + |U|(m-1) -1 = (m-1)|U| + (n-c)$. 
\end{proof}

\begin{proof}[Proof of Theorem \ref{reg}]
	Since coloops are variables not appearing in $I_{\Delta}$, it suffices to prove the statement when $\Delta$ is a coloopless matroid on $[n]$. In this case, the rank of the core is $r({\rm core}(\Delta))=r(\Delta)$.
	So, if we set $c:={\rm grade}(I_{\Delta})$, and let $\omega(I_{\Delta})$ denote the largest degree of a minimal generator of $I_{\Delta}$, then we need to show
	$$
	\reg(I_{\Delta}^{(m)}) = (m-1)\omega(I_\Delta) + n-c + 1.
	$$
	To compute the left-hand side, we dualize the polarization and obtain $$\reg(I_{\Delta}^{(m)}) =\reg([I_{\Delta}^{(m)}]^{\pol})=\pd(T/([I_{\Delta}^{(m)}]^{\pol})^\vee)=\pd(([I_{\Delta}^{(m)}]^{\pol})^\vee)+1,$$
	where the last equality holds by a theorem of Terai \cite{Te99}. Recall that  
	\[G(J_m^\vee)=(x_{i_1,a_1}x_{i_2,a_2}\cdots x_{i_c,a_c}\,\mid\, (x_{i_1},\ldots,x_{i_c})\in \Ass(R/I_\Delta) \text{ and }\sum_{j=1}^ca_j\leq m+c).\]
	
	By Theorem \ref{cmsymb}.(2), the Alexander dual $J_m^\vee$ of $J_m=[I_{\Delta}^{(m)}]^{\pol}$ has linear quotients under the order discussed in Notation \ref{J_m}. Since the projective dimension of an ideal $L$ with linear quotients is the largest grade among the ideals $(N\in G(L)\,\mid\,N<M):M$ for $M \in G(L)$, then we need to prove that 
	\begin{equation}\label{goal}
		\max\{{\rm grade}(C_M)\,\mid\,M\in G((J_m)^\vee)\} = (m-1)\omega(I_\Delta) + n-c.
	\end{equation}
	
	``$\geq$": Let $U$ be a maximal size circuit of $\Delta$, so $|U|=c(\Delta)$. By Lemma \ref{C_M}, there exists $M\in G((J_m)^\vee)$ such that $\grade(C_M)=(m-1)|U|+n-c = (m-1)\omega(I_\Delta) + n-c$.\medskip
	
	``$\leq$: Let $M\in G((C_M)^{\vee})$ be such that ${\rm grade}(C_M)$ is maximal. We first prove the desired inequality when the second indices of $M$ are precisely 1,1,\ldots,1, $m$, i.e. when $M$ has the form $M=x_{i_1,1}x_{i_2,1}\cdots x_{i_{c-1},1}x_{i_{c},m}$ for some $1\leq i_1<i_2<\ldots<i_c\leq n$. By definition of $M\in G((J_m)^\vee)$, $\{i_1,\ldots,i_c\}$ is a basis $B$ of $\Delta^*$, so $I:=\{i_1,\ldots,i_{c-1}\}$ is an independent set in $\Delta^*$. Let $H$ be any hyperplane of smallest size containing $I$, and let $U:=[n]-H$ be its complement, so $U$ is a circuit in $\Delta$. Now, an argument similar to the one of Lemma \ref{C_M} shows that 
	{\small$$
		\begin{array}{ll}
			C_M & \subseteq (x_{j,1}\,\mid\,j\notin\{n-c+1,\ldots,n-1\}) + (x_{j,b}\,\mid\,j\in U \text{ and }2\leq b \leq m-1) + (x_{j,m}\,\mid\, j\in U \text{ and }j<i_c)\\
			& \subseteq (x_{j,1}\,\mid\,j\notin\{n-c+1,\ldots,n-1\}) + (x_{j,b}\,\mid\,j\in U \text{ and }2\leq b \leq m) - (x_{n,m})
		\end{array}$$}
	so, as in the proof of Lemma \ref{C_M}, one finds $\grade(C_M)\leq (m-1)|U| + n-c\leq (m-1)\omega(I_{\Delta}) + n-c$.\medskip
	
	To conclude, we now prove the inequality ``$\leq$" of (\ref{goal}) by induction on $m\geq 1$. The base cases are the monomials $M$ of the form $M=x_{i_1,1}x_{i_2,1}\cdots x_{i_{c-1},1}x_{i_{c},m}$. 
	
	Assume $m\geq 3$, we s and let $M=x_{i_1,a_1}x_{i_2,a_2}\cdots x_{i_{c},a_c}$ for some $1\leq a_1\leq a_2\leq \ldots \leq a_c$ with $a_1+\ldots +a_c=c+m-1$, and $c$ distinct integers $i_1,\ldots,i_c$ in $[n]$. 
	
	We define a set $D\subseteq [c]$ and $d\in \NN_0$ as follows: if $a_1\geq 2$ set $D=\emptyset$ and $d=0$; if $a_1=1$, set $D=\{h\in[c]\,\mid\,a_h=1 \}$ and $d=|D|\leq c$. If $d\geq c-1$, then $M$ is as in the base case, so the statement follows. Assume $d\leq c-2$. It is evident that $C_M\subseteq (x_{j,1}\,\mid\,j\in [n]-D) + (x_{j,b}\in C_M\,\mid\,b\geq 2)$, so $\grade(C_M)\leq (n-d) + \grade(x_{j,b}\in C_M\,\mid\,b\geq 2)$.
	
	We now study the second summand. Let $R'=k[x_j\,\mid\,j\notin D]$, $m':=m-(c-d)$ and $\Lambda$ be the matroid obtained by deleting $\{i_1,\ldots,i_d\}$ from $\Delta$. Observe that $\Lambda^*$ is the contraction $\Lambda^*:=\Delta^* / \{i_1,\ldots,i_d\}$, so $\{i_{d+1},\ldots,i_c\}$ is a basis of $\Lambda^*$, and thus $M':=x_{i_{d+1},(a_{d+1}-1)}x_{i_2,(a_{d+2}-1)}\cdots x_{i_{c},(a_c-1)}$ is a minimal generator of $[(I_\Lambda)^{(m')}]^{\vee}$. As above, we write $C_{M'} := (N'\in G([(I_\Lambda)^{(m')}]^{\vee})\,\mid\,N'<M')\;:_{R'} \; M'$. 
	Since the sum of the second indices in any element in $G((J_m)^\vee)$ is maximal,  then for all $1\leq h\leq d$ and $\ell\geq 2$ the variable $x_{i_h,\ell}$ is not in $C_M$.
	Therefore, by shifting the second index by one, it is immediately seen that the second summand can be re-written as
	$$(x_{j,b}\in C_M\,\mid\,b\geq 2) = (x_{j,b'+1}\,\mid\,x_{j,b'}\in C_{M'})$$
	therefore,
	$$
	\begin{array}{ll}
		\grade(C_M) &  \leq (n-d)+ \grade(C_{M'})\\
		& \leq (n-d)+ (m'-1)\omega(I_{\Lambda}) + (n-c)\\
		& \leq (n-d)+ (m-c+d-1)\omega(I_{\Gamma}) + (n-c)\\
		& \leq (m-1)\omega(I_{\Delta}) + (n-c),
	\end{array}
	$$
	where the second inequality holds by induction, the third one follows because $\omega(I_{\Lambda})\leq \omega(I_{\Gamma})$ (e.g. \cite[3.1.14]{Ox92}), and the last inequality follows from the inequality $(c-d)\omega(I_{\Delta})\geq n-d$, which, by Lemma \ref{c(Delta)}, holds for all $d\leq c-2$.

\end{proof}

\section{The Level Property}

In this section we prove Theorem \ref{levelintro}, which we recall here for the reader's convenience. The concept of level algebra was introduced by R. Stanley, see \cite[Section~3]{St77}. Let $J\subseteq R$ be a homogeneous Cohen--Macaulay ideal of grade $c$. The algebra $R/J$ is {\em level} if its canonical module $\omega_{R/J}\cong {\rm Ext}_R^c(R/J,R)$ is generated in single degree or, equivalently, if the graded $R$-module ${\rm Tor}^R_c(R/J,k)$ is generated in a single degree.

\begin{theorem}\label{levelthm}
Let $\Delta$ be a simplicial complex. The following are equivalent:
\begin{enumerate}
\item[$(1)$] $\Delta$ is a complete intersection and its minimal nonfaces all have the same size.
\item[$(2)$] $R/I_{\Delta}^m$ is a level algebra for all $m \ge 1$.
\item[$(3)$] $R/I_{\Delta}^m$ is a level algebra for some $m \ge 3.$
\item[$(4)$] $R/I_{\Delta}^{(m)}$ is a level algebra for all $m \ge 1$.
\item[$(5)$] $R/I_{\Delta}^{(m)}$ is a level algebra for some $m \ge 3$.
\end{enumerate}

\end{theorem}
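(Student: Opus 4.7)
The plan follows the pattern of Sections 3 and 4: exploit the fact that every level algebra is Cohen--Macaulay, feed this into Theorems \ref{main2} and \ref{cmsymb} to extract strong combinatorial structure on $\Delta$, and then quantify the remaining ``single degree'' part of the level hypothesis through explicit simplicial--homology computations in the spirit of Theorem \ref{char-level}. I would first dispose of the trivial implications (2)$\Rightarrow$(3) and (4)$\Rightarrow$(5), and record the useful observation that whenever $\Delta$ is a complete intersection one automatically has $I_\Delta^m = I_\Delta^{(m)}$ for every $m$ (so (2)$\Leftrightarrow$(4) and (3)$\Leftrightarrow$(5) hold as soon as a complete--intersection structure has been established).

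For (1)$\Rightarrow$(2) and (1)$\Rightarrow$(4): by Lemma \ref{ci}, assumption (1) yields $I_\Delta = (M_1,\ldots,M_c)$ with $M_i = \prod_{j\in V_i} x_j$, $|V_i| = d$, and $V_1 \sqcup \cdots \sqcup V_c = [n]$. I would exploit the tensor factorization $R \cong \bigotimes_{i=1}^c k[x_j \mid j\in V_i]$, invoking Proposition \ref{polcm}(4) to pass through the polarization, to reduce the problem to the base case of a single principal ideal $(x_1\cdots x_d)^m$ in $k[x_1,\ldots,x_d]$, whose canonical module is trivially generated in a single degree. A K\"unneth--type argument then glues the factors: a tensor product of level algebras whose socles sit in single degrees is itself level with socle in the sum of those degrees, and since all $|V_i|$ are equal to $d$ the sum collapses to one value.

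For (3)$\Rightarrow$(1) and (5)$\Rightarrow$(1): level forces Cohen--Macaulayness, so Theorem \ref{main2} (ordinary case) already gives that $\Delta$ is a complete intersection, while Theorem \ref{cmsymb} (symbolic case) only gives that $\Delta$ is a matroid. In both cases I must still establish uniformity of generator degrees, and in case (5) I must additionally upgrade ``matroid'' to ``complete intersection''. The central tool is Theorem \ref{char-level} applied to the polarization $\Delta^{(m)}$ (or to $\Delta((I_\Delta^m)^{\pol})$ in the ordinary case): level translates into the assertion that every $V$ with $\tilde H_{|V|-c-1}(\Delta^{(m)}|_V) \ne 0$ has the same cardinality. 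Using Proposition \ref{polprimes} to enumerate the facets of $(\Delta^{(m)})^*$, together with the matroid exchange axiom to manipulate them, I would construct for any hypothetical counterexample two explicit vertex subsets $V,W$ of $\Delta^{(m)}$ with $|V|\ne |W|$ both supporting a non--trivial top--dimensional reduced simplicial cycle, thereby contradicting level.

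The main obstacle will be this last step in case (5)$\Rightarrow$(1), namely the passage from ``matroid'' to ``complete intersection''. By Remark \ref{s-p} the CI property fails precisely when the simplified dual matroid $^{\mathrm{si}}(\Delta^*)$ has more than $c$ parallel classes, i.e. when some parallel class of $\Delta^*$ has size at least two; the delicate combinatorial task is to write down, in that situation, an explicit top--dimensional reduced homology cycle of a restriction $\Delta^{(m)}|_V$ whose cardinality $|V|$ differs from the canonical value coming from a basis of $\Delta^*$. I would lean heavily on the careful total order of the vertex set $[n]\times [m]$ introduced in Notation \ref{J_m} together with the matroid exchange axiom, in the spirit of the arguments used in the proofs of Theorems \ref{cmsymb} and \ref{reg}, to produce simplicial cycles of the required dimensions and degrees.
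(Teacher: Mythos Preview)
Your overall architecture matches the paper's: dispose of the trivial implications, cite known results for $(1)\Rightarrow(2)$, use Theorems \ref{main2} and \ref{cmsymb} to extract the matroid/complete--intersection structure from the Cohen--Macaulay content of ``level'', and then attack the single--degree condition via Theorem \ref{char-level} applied to the polarization $\Delta^{(m)}$, producing two subsets $V,W$ of different sizes both supporting top homology. So far so good.

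The divergence is in \emph{how} those two witness subsets are produced in $(5)\Rightarrow(1)$. You propose to detect the failure of the complete--intersection property through parallel classes of $\Delta^*$ and to manufacture the cycles by leaning on the total order of Notation \ref{J_m} and the linear--quotient machinery from the proofs of Theorems \ref{cmsymb} and \ref{reg}. The paper does something rather different and more direct: it works with \emph{circuits} of $\Delta$ (not parallel classes of $\Delta^*$), and it builds the cycles by hand as joins of codimension--one spheres with links. Concretely, for any circuit $C$ the paper exhibits a $W$ of size $(m-1)|C|+n$ with $\tilde H_{|W|-c-1}(\Delta^{(m)}|_W)\neq 0$ (Proposition \ref{circuit1}); this already forces all circuits to the same size (Proposition \ref{equigen}). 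Then, for two \emph{overlapping} circuits $C\neq C'$ with $|C\cap C'|$ maximal, a second set $W'$ of size $(m-1)|C|-|C\cap C'|+n$ is produced (Proposition \ref{ci1}), and the discrepancy $|C\cap C'|>0$ kills level. The cycles themselves come from tensoring full--support generators of the sphere skeletons with nonzero top cycles of suitable links, via the K\"unneth isomorphism for joins (Lemma \ref{chain}), and are shown not to be boundaries because they are supported on genuine facets of the restricted complex (Lemma \ref{noboundary}).

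Your linear--quotient idea is not absurd, but it is not clear it delivers what you need: linear quotients of $J_m^\vee$ control the resolution of $J_m^\vee$, whereas level is a statement about the last syzygy module of $J_m$ itself, and translating between the two requires more than just reading off $\max_M\grade(C_M)$ as in Theorem \ref{reg}. The paper's circuit--based construction sidesteps this entirely by writing down explicit Hochster--formula witnesses. If you want to stay close to the paper, drop the total--order plan for this step and instead think in terms of circuits: one circuit gives one value of $|W|$, and a pair of overlapping circuits (which exist exactly when $\Delta$ is not a complete intersection) gives a strictly smaller one.
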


It is natural to ask whether a similar statement is true if $R/I_{\Delta}^{(2)}$ is a level algebra. In this case, it is known that $\Delta$ need not be a complete intersection, see for instance  \cite[Section~5]{MTT19}, which is dedicated to study of 1-dimensional matroidal simplicial complexes for which  $R/I_{\Delta}^{(2)}$ is level. To complete the picture, we propose the following conjecture, whose forward implication follows immediately from Proposition \ref{equigen} below. 

\begin{conjecture}\label{level2}
	Let $\Delta$ be a matroid. Then $R/I_{\Delta}^{(2)}$ is a level algebra if and only if all circuits of $\Delta$ have the same size.
\end{conjecture}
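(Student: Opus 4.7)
The forward implication is stated to follow immediately from Proposition \ref{equigen}, so the real content of the conjecture is the reverse direction, which is what my plan targets. Assume $\Delta$ is a matroid all of whose circuits have the same size $w$; the goal is to show $R/I_\Delta^{(2)}$ is level. The plan is to mimic the polarization strategy used throughout the paper. By Proposition \ref{polcm}(4) it suffices to show $T/J_2$ is level, where $J_2 = [I_\Delta^{(2)}]^{\pol}$. By Theorem \ref{cmsymb}, $T/J_2$ is Cohen-Macaulay of codimension $c$, so being level is equivalent to the top graded Betti numbers $\beta_{c,j}(T/J_2)$ being concentrated in a single degree $j_0$. After reducing to the coloopless case (harmless, since coloops induce polynomial-ring extensions that preserve levelness) and, via Remark \ref{s-p}, to the simplified case, Theorem \ref{reg} specialized to $m=2$ yields $\reg(I_\Delta^{(2)}) = w + (n-c) + 1$, pinning the expected single degree to $j_0 = n + w$.

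The main technical tool will be the linear quotients structure on $J_2^\vee$ established in the proof of Theorem \ref{cmsymb}. Since all minimal generators of $J_2^\vee$ live in degree $c$, the standard linear-quotients decomposition
\begin{equation*}
\beta_i(J_2^\vee) \;=\; \sum_{M \in G(J_2^\vee)} \binom{\grade(C_M)}{i}
\end{equation*}
computes all graded Betti numbers of $J_2^\vee$. By Lemma \ref{C_M} and the proof of Theorem \ref{reg}, the monomials $M$ realizing the top value $\grade(C_M) = w + (n-c)$ correspond precisely to size-$w$ circuits of $\Delta$; under the equi-circuit hypothesis, every circuit has this size, so all top syzygies of $J_2^\vee$ arise from circuit-type monomials of a uniform shape. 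The next step is to transport this information back to $T/J_2$ via the graded Alexander/Eagon-Reiner duality, which relates the top syzygies of $J_2^\vee$ to the minimal generators of the canonical module $\omega_{T/J_2}$. Carried out carefully, this should place every minimal generator of $\omega_{T/J_2}$ in the single degree $j_0 = n + w$, which yields levelness.

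The hard part is excluding stray contributions, namely ruling out $\beta_{c,j}(T/J_2) \neq 0$ for $j < n + w$. Via Hochster's formula this is the statement that every $V \subseteq [n]\times[2]$ with $\tilde{H}_{|V|-c-1}(\Delta^{(2)}|_V; k) \neq 0$ satisfies $|V| = n + w$. My approach is to parametrize such a $V$ by a circuit $U$ of $\Delta$ together with a compatible choice of ``doubled'' indices $i \in U$ (meaning both $(i,1)$ and $(i,2)$ lie in $V$), and then to use the matroid exchange axioms together with the ssi-constraint of Proposition \ref{polprimes} to force $|V| = n + |U|$, so that $|V| = n + w$ uniformly under the equi-circuit assumption. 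The delicate point is verifying that no ``non-circuit'' induced subcomplexes of $\Delta^{(2)}$ can carry top induced homology; the rank-$2$ analysis of \cite{MTT19} shows that such an enumeration is tractable, but suggests that a general proof may require a careful circuit-by-circuit argument exploiting contractions in the underlying matroid and the interaction between the second indices in $V$ and the ssi-bound $c+1$ in Proposition \ref{polprimes}.
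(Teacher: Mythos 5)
This statement is an open conjecture in the paper: the authors only establish the forward implication (via Proposition \ref{equigen} with $m=2$) and explicitly leave the converse unproved, so there is no proof of record to measure you against. Your proposal correctly disposes of the forward direction by citing Proposition \ref{equigen}, but what you offer for the converse is a plan rather than a proof, and it contains both an admitted gap and a flawed intermediate step.

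The flawed step is the ``transport back'' via Alexander/Eagon--Reiner duality. Because $J_2^\vee$ has linear quotients and is equigenerated in degree $c$, its resolution is linear, so its graded Betti numbers are concentrated on a single strand and are completely determined by the multiset $\{\grade(C_M)\}$. This data can only see the projective dimension of $J_2^\vee$, hence the regularity of $J_2$ and the extremal corner of its Betti table; it cannot detect the non-extremal top Betti numbers $\beta_{c,j}(T/J_2)$ for $j$ below the regularity, which are exactly what levelness is about. Indeed, for $m\ge 3$ and a non--complete-intersection matroid, $J_m^\vee$ still has linear quotients by Theorem \ref{cmsymb}, yet $T/J_m$ fails to be level by Theorem \ref{levelthm}; so no argument that only uses the resolution of $J_m^\vee$ can establish levelness. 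You implicitly acknowledge this by falling back on Hochster's formula in the last paragraph, but there the key assertion --- that every $V\subseteq[n]\times[2]$ with $\tilde{H}_{|V|-c-1}(\Delta^{(2)}|_V)\neq 0$ satisfies $|V|=n+w$ --- is precisely the content of the conjecture restated, and your proposed parametrization of such $V$ by circuits with doubled indices is not carried out: you give no argument that rules out nonvanishing induced homology on subsets not of that shape, and you concede as much. So the converse direction remains open.
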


As mentioned above, we offer a proof of Theorem \ref{levelthm} in a slightly different spirit from the previous sections, with the goal of illustrating how our approach can be combined with other types of techniques, e.g. Hochster's formula and the combinatorics of $\Delta^{(m)}$. We begin with a few preparatory results, starting with the following elementary observation: 
\begin{lem}\label{noboundary}

Let $(C_{\bullet},\partial)$ be the chain complex of the simplicial complex $\Delta$, and consider an element $\epsilon:=\sum_{|F|=t} c_F F \in C_t$. If $\epsilon$ is a boundary in $C_{\bullet}$, then $c_G=0$ for any facet $G \in \Delta$ with $|G|=t$.

\end{lem}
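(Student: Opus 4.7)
The plan is to unwind the definitions and observe that the claim is essentially immediate from the fact that a facet of dimension $t-1$ has no cofaces in $\Delta$. Concretely, recall that the boundary operator $\partial \colon C_{t+1} \to C_t$ acts on a generator, i.e.\ a face $H \in \Delta$ with $|H|=t+1$, by
\[
\partial H \;=\; \sum_{v \in H} (-1)^{\sigma(v,H)} (H - \{v\}),
\]
so every face appearing in the support of $\partial H$ is a proper subset of $H$, and in particular is contained in the $(t+1)$-element face $H\in \Delta$. Extending by linearity, for any chain $\eta = \sum_{|H|=t+1} d_H H \in C_{t+1}$, the support of $\partial \eta$ is contained in the collection of $t$-element subsets $F$ of $[n]$ such that $F \subsetneq H$ for some $H \in \Delta$ with $|H|=t+1$.

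Now suppose $G$ is a facet of $\Delta$ with $|G|=t$. By definition of facet, there is no face $H \in \Delta$ with $G \subsetneq H$; in particular, there is no $H \in \Delta$ with $|H|=t+1$ and $G \subset H$. Therefore $G$ cannot appear in the support of $\partial \eta$ for any $\eta \in C_{t+1}$. If $\epsilon = \partial \eta$ is a boundary and we expand $\epsilon = \sum_{|F|=t} c_F F$, then the coefficient $c_G$ must vanish. There is no serious obstacle here: the only thing to verify is the compatibility of the generator-wise computation with linear extension, which is automatic.
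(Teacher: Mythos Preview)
Your proof is correct and follows essentially the same approach as the paper: both arguments observe that the support of any boundary $\partial\eta$ lies among $t$-element faces contained in some $(t+1)$-element face of $\Delta$, so a facet $G$ of size $t$ cannot appear, forcing $c_G=0$. You simply spell out the boundary formula in slightly more detail than the paper does.
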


\begin{proof}

As $\epsilon$ is a boundary, there is a $b=\sum_{|\sigma|=t+1} r_{\sigma} \sigma$ so that $\partial(b)=\epsilon$. But $\partial(b)$ is supported only on faces which are contained in faces of size $|t+1|$. In particular, $G$ is not in the support of $\partial(b)$. As $\{\tau \in \Delta \mid |\tau|=t\}$ forms a basis for $C_t$, it follows that $c_G=0$.

\end{proof}

The following is well-known: 
\begin{lem}\label{chain}

Let $\Delta$ and $\Gamma$ be simplicial complexes. Then there is a natural isomorphism of chain complexes
\[C^{\Delta}_{\bullet} \otimes_k C^{\Gamma}_{\bullet} \to C^{\Delta*\Gamma}_{\bullet}\] given on basis vectors by $F \otimes G \mapsto F \cup G$.

\end{lem}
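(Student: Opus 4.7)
The plan is to exhibit the map explicitly on basis elements and then verify directly that it commutes with the boundary. Since the authors use \emph{size} as the grading index (compare the phrasing ``$\sum_{|F|=t} c_F F \in C_t$'' in Lemma \ref{noboundary}), a face of $\Delta \ast \Gamma$ of size $t$ is, by the definition of the join on disjoint vertex sets, a unique disjoint union $F \sqcup G$ with $F \in \Delta$, $G \in \Gamma$ and $|F|+|G|=t$. Consequently, the prescription $F \otimes G \mapsto F \cup G$ extends linearly to a well-defined $k$-linear isomorphism
\[
\Phi_t \colon \bigoplus_{p+q=t} C^{\Delta}_p \otimes_k C^{\Gamma}_q \;\lo\; C^{\Delta\ast\Gamma}_t
\]
in each degree $t \geq 0$. (In the reduced setting, $\emptyset \otimes \emptyset \mapsto \emptyset$ is the correct lowest-degree generator.)

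The only substantive point is verifying that $\Phi$ commutes with the boundary. Fix a total order on $V(\Delta)\sqcup V(\Gamma)$ in which every vertex of $\Delta$ precedes every vertex of $\Gamma$. Writing $F=\{v_0<\ldots<v_{p-1}\}$ and $G=\{w_0<\ldots<w_{q-1}\}$, the induced order on $F\cup G$ lists the $v_i$'s first and then the $w_j$'s, so the simplicial boundary decomposes into two blocks according to whether one removes a $v_i$ or a $w_j$:
\[
\partial^{\Delta\ast\Gamma}(F\cup G) \;=\; \sum_{i=0}^{p-1}(-1)^i\bigl(F\setminus\{v_i\}\bigr)\cup G \;+\; \sum_{j=0}^{q-1}(-1)^{p+j} F\cup\bigl(G\setminus\{w_j\}\bigr).
\]
The first sum equals $\Phi(\partial F \otimes G)$ and the second equals $(-1)^{p}\,\Phi(F \otimes \partial G)$. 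Since $|F|=p$ in the size-grading, this is precisely the Koszul/Leibniz rule $\partial(F\otimes G) = \partial F \otimes G + (-1)^{|F|} F \otimes \partial G$ governing the tensor product of chain complexes, so $\partial^{\Delta\ast\Gamma} \circ \Phi = \Phi \circ \partial^{\otimes}$ as required. Naturality in $\Delta$ and $\Gamma$ is automatic because $\Phi$ is defined entirely in terms of the underlying sets of faces, which are preserved by simplicial maps.

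The main---and only---obstacle is keeping the sign bookkeeping between the simplicial boundary of the join and the Koszul sign in the tensor product straight. Choosing the total order so that all vertices of $\Delta$ precede all vertices of $\Gamma$ is exactly what forces the shift in the exponent for the $w_j$-block to match the $(-1)^{|F|}$ in the tensor-product boundary; any other ordering would introduce an extraneous permutation sign that would have to be absorbed into the definition of $\Phi$. Beyond this piece of bookkeeping, the proof is a direct verification, consistent with the ``well-known'' designation in the statement.
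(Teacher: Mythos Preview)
The paper does not supply a proof of this lemma at all; it is introduced with ``The following is well-known'' and then simply used. Your argument is correct and fills in exactly the verification one would expect: the bijection on bases is immediate from the definition of the join on disjoint vertex sets, and your sign check---ordering all vertices of $\Delta$ before those of $\Gamma$ so that the $(-1)^{p+j}$ arising from the simplicial boundary matches the Koszul sign $(-1)^{|F|}$---is the right bookkeeping. Your care in noting that the paper's \emph{size} grading (rather than the usual dimension grading) is what makes $F\otimes G\mapsto F\cup G$ a degree-preserving chain map without an additional shift is on point; with the dimension convention the same map would land in degree $m+n+1$ and one would see the familiar shift in the K\"unneth formula for joins.
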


Recall that a matroid $\Delta$ is coloopless if, when regarded as a simplicial complex, it has no cone points. A circuit of $\Delta$ is a minimal non-face of $\Delta$.
\begin{prop}\label{circuit1}

Suppose $\Delta$ is a coloopless matroid and $C$ a circuit of $\Delta$. Let $\alpha_C:=\{(i,j)\, \mid\, i \in C,\, 1 \le j \le m\}$ and let $W:=\{(i,1) \mid i \notin C\} \,\cup\, \alpha_C \subseteq \Delta^{(m)}$. Then $\tilde{H}_{(m-1)|C|+d-1}(\Delta^{(m)}|_{W}) \ne 0$. 
    
\end{prop}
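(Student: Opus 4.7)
The plan is to exhibit an explicit cycle $z\in C_{d'}(\Delta^{(m)}|_W;k)$ with $d':=(m-1)|C|+d-1=|W|-c-1$, and apply Lemma \ref{noboundary} to conclude that $z$ is not a boundary.

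The crucial structural input will be to establish that $\Delta^{(m)}|_{C\times[m]}$ equals the boundary of the full $(m|C|-1)$-simplex on $C\times[m]$, i.e., is a sphere $S^{m|C|-2}$. One direction is immediate: $C\times[m]\notin\Delta^{(m)}$ since $\prod_{i\in C}x_i^m\in I_\Delta^{(m)}$ (each facet $F$ of $\Delta$ satisfies $\sum_{i\in C\setminus F}m\geq m$ because $C\not\subseteq F$). For the other direction, if $F\subsetneq C\times[m]$ were a non-face, a polarized generator $M^{\pol}$ of $J_m$ would have $\supp(M^{\pol})\subseteq F$; writing $M=\prod_{i\in C}x_i^{a_i}$, the support condition forces $a_{i_0}\leq b_{i_0}$ where $b_{i_0}$ is the length of the longest initial segment $\{1,\dots,k\}\subseteq\{j:(i_0,j)\in F\}$. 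For each $i_0\in C$, basis exchange provides a facet $F'$ of $\Delta$ containing $C\setminus\{i_0\}$ but not $i_0$ (since $C$ is a minimal non-face, no basis extending the independent set $C\setminus\{i_0\}$ can include $i_0$); the containment $M\in\p_{F'}^m$ then forces $a_{i_0}\geq m$, so $a_{i_0}=b_{i_0}=m$ for every $i_0\in C$, forcing $\supp(M^{\pol})=C\times[m]\not\subseteq F$, a contradiction.

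In the maximal-circuit case $|C|=d+1$, we have $m|C|-2=d'$, so I take $z$ to be the fundamental class of $S^{m|C|-2}\hookrightarrow\Delta^{(m)}|_W$, which is a cycle in the target dimension. To conclude $z$ is not a boundary, apply Lemma \ref{noboundary} to the face $G:=C\times[m]\setminus\{(i_0,m)\}$ in the support of $z$: $G$ is a facet of $\Delta^{(m)}|_W$ since adding $(i_0,m)$ recovers the non-face $C\times[m]$, and adding any $(j,1)$ with $j\in A$ yields a non-face because $M:=(\prod_{i\in C}x_i)^{m-1}\cdot x_j$ lies in $I_\Delta^{(m)}$ with $\supp(M^{\pol})\subseteq G\cup\{(j,1)\}$. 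To verify $M\in I_\Delta^{(m)}$, for every facet $F$ of $\Delta$ one has $\sum_{i\notin F}a_i=(m-1)|C\setminus F|+[j\notin F]$, and since $|C|=d+1$ one cannot have both $|C\cap F|=d$ and $j\in F$ (as then $(C\setminus\{i'\})\cup\{j\}\subseteq F$ has $d+1$ elements, exceeding $|F|=d$); so either $|C\setminus F|\geq 2$, giving the sum $\geq 2m-2\geq m$, or $j\notin F$, giving the sum $\geq(m-1)+1=m$.

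For the sub-maximal case $|C|<d+1$, the sphere has dimension $m|C|-2<d'$, so the cycle must be built by joining $\Delta^{(m)}|_{C\times[m]}$ with an appropriate sphere on $A\times\{1\}$ of dimension $d-|C|$. Submodularity gives $r_\Delta(A)\geq d-|C|+1$ (since $A,C$ partition $[n]$, $r_\Delta(C)=|C|-1$, and $r_\Delta(A)+r_\Delta(C)\geq r_\Delta([n])=d$), allowing the choice of an independent set $I\subseteq A$ of that size in $\Delta$; the proposed cycle comes from $\Delta^{(m)}|_{C\times[m]}\star\partial\Sigma_{I\times\{1\}}$, of total dimension $(m|C|-2)+(d-|C|)+1=d'$. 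The main obstacle will be in this sub-maximal case: verifying that the join is actually a subcomplex of $\Delta^{(m)}|_W$ (i.e., that unions $F_I\cup F_C$ with $F_I\subsetneq I\times\{1\}$ and $F_C\subsetneq C\times[m]$ are faces of $\Delta^{(m)}$) and identifying a facet of the join that remains a facet of $\Delta^{(m)}|_W$ for the application of Lemma \ref{noboundary}, both of which require additional matroid-theoretic work leveraging the hyperplane structure of $A$ in $\Delta^*$ and the interplay with circuits of $\Delta$.
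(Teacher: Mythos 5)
Your first step---showing that $\Delta^{(m)}|_{\alpha_C}$ is exactly the boundary of the simplex on $C\times[m]$, hence a sphere of dimension $m|C|-2$---is correct and is essentially the role played by $\Gamma_C:=\Skel^{(m|C|-2)}(\langle\alpha_C\rangle)$ in the paper's proof. Your treatment of the extremal case $|C|=d+1$ is also complete and correct (up to the harmless $m=1$ edge case in the inequality $2m-2\ge m$), and there the degrees match so the fundamental class of the sphere alone suffices.

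The genuine gap is the sub-maximal case $|C|\le d$, which is the generic one and the one actually needed downstream (Propositions \ref{equigen} and \ref{ci1} apply the statement to arbitrary circuits). You acknowledge this case is unfinished, but the sketch you give would not close it for two concrete reasons. First, the dimension count is off by one: with $|I|=d-|C|+1$, the complex $\partial\Sigma_{I\times\{1\}}$ is a sphere of dimension $|I|-2=d-|C|-1$, so the join with $\Gamma_C$ has dimension $(m|C|-2)+(d-|C|-1)+1=d'-1$, not $d'$; to land in degree $d'$ the second factor must have dimension $d-|C|$. Second, and more seriously, even after fixing the count (say $|I|=d-|C|+2$), for $\Gamma_C\star\partial\Sigma_{I\times\{1\}}$ to be a subcomplex of $\Delta^{(m)}|_W$ one needs, for every $i\in C$ and $v\in I$, a prime of $\Ass(k[\Delta^{(m)}])$ avoiding $(C\times[m]-\{(i,m)\})\cup(I-\{v\})^{(1)}$; unwinding Proposition \ref{polprimes}, this forces $(C-\{i\})\cup(I-\{v\})$ to be a \emph{basis} of $\Delta$ for every such pair, i.e.\ $I$ would have to be simultaneously a circuit-like spanning set of every contraction $\Delta/(C-\{i\})$. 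A single independent set $I\subseteq A$ produced by submodularity will not have this property in general. The paper's proof circumvents exactly this obstruction by replacing $\partial\Sigma_I$ with $\beta_C:=\bigcup_{j\in C}\lk_{\Delta}(C-\{j\})^{(1)}$: membership in the link guarantees the required bases exist (via a basis-exchange argument pinning down which element of $C$ gets swapped), the second factor then contributes a top-dimensional homology cycle of $\lk_\Delta(C-\{j\})$ (nonzero because $\Delta$ is coloopless) rather than a simplex boundary, and Lemma \ref{noboundary} is applied to a facet of the form $F^{(1)}\cup\alpha_C-\{(j,m)\}$. You would need to import this link construction (or an equivalent device) to complete the general case.
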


\begin{proof}
For any $F \in \Delta$, we set $F^{(1)}:=\{(i,1) \mid i \in F\}$.\\
Let $\Gamma_C:=\Skel^{(m|C|-2)}(\langle \alpha_C \rangle)$, i.e. $\Gamma_C$ is the standard codimension $1$ sphere on $\alpha_C$, and let $\beta_C:=\bigcup_{j \in C} \lk_{\Delta}(C-\{j\})^{(1)}$. We claim that $\beta_C*\Gamma_C$ is a subcomplex of $\Delta^{(m)}|_{W}$. To see this, take a facet $G:=F^{(1)} \cup (\alpha_C-\{(i,j)\}) \in \beta_C*\Gamma_C$. As $F \in \mathcal{F}(\lk_{\Delta}(C-\{k\}))$ for some $k \in C$, then  $\sigma:=F \cup (C-\{k\})$ is a basis of $\Delta$. Thus, there is $\p_{\sigma}\in \Ass(k[\Delta])$ for which $\{x_l\,\mid\,l\in C\}\cap \p_{\sigma}=\{x_k\}$. As $C$ is a circuit of $\Delta$, then $C-\{i\}$ is an independent set of $\Delta$, so we may find a basis $\tau$ of $\Delta$ containing it. Then $\{x_l\,\mid\,l\in C\}\cap \p_{\tau}=\{x_i\}$. As $\Delta$ is a matroid, there is an $x_l \in \p_{\sigma}$ so that $\p_{\sigma}-(x_l)+(x_i)$ and $\p_{\tau}-(x_i)+(x_l)$ are associated primes of $k[\Delta]$. But if $l \ne k$, then $\p_{\tau}-(x_i)+(x_l)$ would contain no element of $C$, which would force $C$ to be a face of $\Delta$. Thus we must have $l=k$, so in particular $\p_{\sigma}-(x_k)+(x_i) \in \Ass(k[\Delta])$. Then $(x_{l,1} \mid x_l \in \p_{\sigma}-(x_k))+(x_{i,m}) \in \Ass(k[\Delta^{(m)}])$. Since the facet in $\Delta^{(m)}$ corresponding  to this prime contains $G$ by construction, then $G \in \Delta^{(m)}|_W$, establishing that $\beta_C*\Gamma_C$ is a subcomplex of $\Delta^{(m)}|_W$. 

Next we claim that $G:=F^{(1)} \cup \alpha_C-\{(i,m)\}$ is a facet of $\Delta^{(m)}|_W$ for any facet $F \in \lk_{\Delta}(C-\{i\})$. Note that the complement of $G$ as a face of $\Delta^{(m)}|_W$ is $\{(j,1) \mid j \notin F\} \cup \{(i,m)\}$. If there is a larger face of $\Delta|_W$ containing $G$, it must thus contain $G$ and either $(i,m)$ or some $(j,1)$ with $j \notin F \cup C$. But it cannot contain $(i,m)$ since every associated prime of $\Delta^{(m)}$ must contain some element of $\alpha_C$. As $G$ is a face of $\Delta^{(m)}$, there is a facet $\sigma$ of $\Delta^{(m)}$ containing it. In particular, $x_{i,m} \in \p_{\sigma}$, and it follows $x_{a,b} \in \sigma$ whenever $a \ne i$ and $b \ge 2$. If $x_{a,1} \in \sigma$ for some $a \notin F$, then it follows $\p_{G-\{(i,1)\}+\{(j,m)\}} \in \Ass(k[\Delta^{(m)}])$, so $\p_{F \cup C-\{i\} \cup \{j\}} \in \Ass(k[\Delta])$. But then $F \cup C-\{i\} \cup \{j\}$ is a basis of $\Delta$, which in particular implies $F \cup \{j\} \in \lk_{\Delta}(C-\{i\})$, contradicting that $F$ is a facet of $\lk_{\Delta}(C-\{i\})$. Therefore, $G$ is a facet of $\Delta^{(m)}|_W$.   

Now, since $\Delta$ is a coloopless matroid, then $\tilde{H}_{d-|C|}(\lk_{\Delta}(C-\{j\}) \ne 0$. Let $w=\sum_{F \in \mathcal{F}(\lk_{\Delta}(C-\{j\})} c_F F$ be a cycle corresponding to a nonzero element of this homology with a nonzero term $c_F F$. Then $w$ is also a cycle of $\bigcup_{i \in C} \lk_{\Delta}(C-\{i\})$. As $\Gamma_C$ is the standard codimension $1$ sphere, it has the canonical generator $z$ for $\tilde{H}_{m|C|-2}(\Gamma_C)$ that is the alternating sum across all its facets. In particular, $z$ is fully supported. Then $z \otimes w$ is nonzero and by Lemma \ref{chain} corresponds to a cycle $\bar{z}$ in $C^{\Gamma_C*\beta_C}$, which is also a cycle in $C^{\Delta^{(m)}|_W}$ that is supported on the facet $F^{(1)} \cup \alpha_C-\{(j,m)\}$. By Lemma \ref{noboundary}, $\bar{z}$ cannot be a boundary, and it follows $\tilde{H}_{(m-1)|C|+d-1}(\Delta^{(m)}|_W) \ne 0$, as desired. 
\end{proof}

We next prove one implication of Conjecture \ref{level2}.
\begin{prop}\label{equigen}
Suppose $\Delta$ is a matroid. If $R/I_{\Delta}^{(m)}$ is level for some $m \ge 2$, then all circuits of $\Delta$ have the same size. 
  
\end{prop}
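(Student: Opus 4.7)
The plan is to combine Proposition \ref{circuit1} with the Hochster-style characterization of the level property (Theorem \ref{char-level}) applied to the polarization $T/J_m = k[\Delta^{(m)}]$, and then observe that the size of the distinguished face-set built from a circuit $C$ depends on $|C|$ in a rigid enough way that equality of sizes forces equality of circuit sizes.

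More concretely, I would proceed as follows. First, by Remark \ref{variables} I may assume $\Delta$ is coloopless, since coloops can be adjoined as cone points without affecting circuits or the level property. Then, since $R/I_{\Delta}^{(m)}$ is level and polarization preserves levelness (Proposition \ref{polcm}(4)), $T/J_m = k[\Delta^{(m)}]$ is a level algebra of codimension $c$. By Theorem \ref{char-level}, any two subsets $V, W \subseteq [n] \times [m]$ for which $\widetilde{H}_{|V|-c-1}(\Delta^{(m)}|_V) \ne 0$ and $\widetilde{H}_{|W|-c-1}(\Delta^{(m)}|_W) \ne 0$ must satisfy $|V| = |W|$.

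The key step is that Proposition \ref{circuit1} produces exactly such a set for each circuit. Given a circuit $C$ of $\Delta$, set $W_C := \{(i,1) \mid i \notin C\} \cup \alpha_C$, whose cardinality is
\[
|W_C| = (n - |C|) + m|C| = n + (m-1)|C|.
\]
Proposition \ref{circuit1} guarantees $\widetilde{H}_{(m-1)|C| + d - 1}(\Delta^{(m)}|_{W_C}) \ne 0$, and since $n-c=d$ we see that
\[
(m-1)|C| + d - 1 = |W_C| - c - 1,
\]
so $W_C$ is exactly a set of the type required by the level criterion.

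Applying this to two circuits $C, C'$ of $\Delta$, the levelness forces $|W_C| = |W_{C'}|$, hence $(m-1)|C| = (m-1)|C'|$. Since $m \ge 2$ we have $m-1 \ge 1$, and therefore $|C| = |C'|$, which is the desired conclusion. I do not anticipate any serious obstacle: the heavy lifting (constructing the nonzero homology class supported on a set of the correct cardinality) is already done in Proposition \ref{circuit1}, so the remaining work is purely the bookkeeping above combined with the standard reduction to the coloopless case.
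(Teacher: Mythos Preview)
Your proposal is correct and follows essentially the same argument as the paper: apply Proposition \ref{circuit1} to each circuit to obtain a subset $W_C$ of size $n+(m-1)|C|$ with nonvanishing homology in degree $|W_C|-c-1$, transfer levelness to the polarization via Proposition \ref{polcm}(4), and use the Hochster-formula characterization of levelness (Theorem \ref{char-level}) to force $|W_C|=|W_{C'}|$, whence $|C|=|C'|$ since $m\ge 2$. Your explicit reduction to the coloopless case and the verification that $(m-1)|C|+d-1=|W_C|-c-1$ are helpful details that the paper leaves implicit.
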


\begin{proof}

Let $C$, $C'$ be any two circuits of $\Delta$. By Proposition \ref{circuit1}, we have $\tilde{H}_{(m-1)|C|+d-1}(\Delta^{(m)}|_W)$ and $\tilde{H}_{(m-1)|C'|+d-1}(\Delta^{(m)}|_{W'})$ are nonzero for some $W, W'$ with $|W|=(m-1)|C|+n$ and $|W'|=(m-1)|C'|+n$. Since $R/I_{\Delta}^{(m)}$ is level, so is $T/[I_{\Delta}^{(m)}]^{\pol}$, by Proposition \ref{polcm} (4). By Hochster's formula \cite[Theorem 5.5.1]{BH93}, it follows that $|W|=|W'|$. Since $m \ge 2$, it follows that $|C|=|C'|$, so that $I_{\Delta}$ is generated in a single degree.

\end{proof}

We will refer frequently to the following well--known, elementary lemma.
\begin{lem}\label{primehastwo}
Let $\Delta$ be a matroid and let $C\neq C'$ be two circuits. Then every basis of $\Delta^*$ contains at least two distinct elements of $C \cup C'$.     
\end{lem}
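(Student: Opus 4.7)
The plan is to translate the statement into a question about facets of $\Delta$ and then apply weak circuit elimination. A basis of $\Delta^*$ is precisely the complement of a facet of $\Delta$, so fixing a basis $B$ of $\Delta^*$ amounts to fixing a facet $F:=[n]\setminus B$ of $\Delta$. Since a circuit is a \emph{minimal non-face} of $\Delta$, it cannot be contained in $F$, so every circuit of $\Delta$ meets every basis of $\Delta^*$. In particular both $B\cap C$ and $B\cap C'$ are non-empty, which already gives $|B\cap (C\cup C')|\geq 1$.

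The task is therefore to rule out the case $|B\cap (C\cup C')|=1$. If this held, then since both $B\cap C$ and $B\cap C'$ are non-empty subsets of the singleton $B\cap(C\cup C')=\{x\}$, we would be forced into the configuration
\begin{equation*}
B\cap C \;=\; B\cap C' \;=\; \{x\}, \qquad x\in C\cap C'.
\end{equation*}

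At this point I would invoke the weak circuit elimination axiom for the matroid $\Delta$: since $C\neq C'$ are distinct circuits sharing the element $x$, there exists a circuit $C''$ of $\Delta$ with $C''\subseteq (C\cup C')\setminus\{x\}$. Intersecting with $B$ yields
\begin{equation*}
B\cap C''\;\subseteq\; B\cap\bigl((C\cup C')\setminus\{x\}\bigr) \;=\; \{x\}\setminus\{x\} \;=\; \emptyset,
\end{equation*}
contradicting the fact established above that every circuit of $\Delta$ meets $B$. This rules out $|B\cap (C\cup C')|=1$ and completes the proof.

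The argument is essentially a one-line application of circuit elimination once the translation is made, so there is no genuine obstacle; the only point requiring care is remembering the correct duality $\mathcal{F}(\Delta^*)=\{[n]\setminus F\mid F\in\mathcal{F}(\Delta)\}$ set up in Section \ref{not}, so that ``$B$ is a basis of $\Delta^*$'' means exactly ``$B$ is the complement of a facet of $\Delta$'' — this is what makes the observation ``every circuit meets every basis of $\Delta^*$'' immediate from the definition of a circuit as a minimal non-face.
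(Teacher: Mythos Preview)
Your proof is correct and follows essentially the same approach as the paper's: both observe that every circuit of $\Delta$ must meet every basis of $\Delta^*$, reduce the singleton case to $B\cap C=B\cap C'=\{x\}$ with $x\in C\cap C'$, and then apply weak circuit elimination to produce a circuit $C''\subseteq (C\cup C')\setminus\{x\}$ disjoint from $B$, a contradiction. Your explicit translation via complements of facets is a slight elaboration of what the paper leaves implicit, but the argument is the same.
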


\begin{proof}

Let $F$ be a basis of $\Delta^*$. Assume by contradiction $F\cap (C\cup C')=\{a\}$. Since $C$ and $C'$ are circuits of $\Delta$, then $F$ contains at least one element from each, so we must have $a \in C \cap C'$. Since $\Delta$ is a matroid, then there is a circuit $C''\subseteq (C\cup C')-\{a\}$. Since $F$ and $C''$ are not disjoint, there is $b\in F\cap C''$. So $b\in F\cap \left((C\cup C')-\{a\}\right)=\emptyset$, a contradiction. 
\end{proof}

\begin{lem}\label{maximaloverlap}

Let $\Delta$ be a matroid whose circuits all have the same size. Suppose $C\neq C'$ are circuits for which $|C \cap C'|$ is maximal. Then $(C \cup C')-\{i,j\}$ is an independent set for any $i \in C$ and $j \in C'-C$.  

\end{lem}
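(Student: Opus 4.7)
Let $s:=|C|=|C'|$ (equal by the standing hypothesis on $\Delta$) and set $t:=|C\cap C'|$, so $0\le t<s$ and $|C\cup C'|=2s-t$. The plan is to argue by contradiction: assume $(C\cup C')-\{i,j\}$ is dependent for some $i\in C$ and $j\in C'-C$, so this set contains some circuit $C''$. I will use only two ingredients: the fact that all circuits of $\Delta$ have size $s$, and a simple pigeonhole computation forcing $C''$ to overlap either $C$ or $C'$ in more than $t$ elements, contradicting the maximality of $t$.

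First I would check that $C''\notin\{C,C'\}$. Indeed, if $C''=C$ then $i\in C\subseteq(C\cup C')-\{i,j\}$, impossible; and if $C''=C'$ then $j\in C'\subseteq(C\cup C')-\{i,j\}$, also impossible. Hence $C''$ is a circuit distinct from both $C$ and $C'$, and by hypothesis $|C''|=s$. Moreover $C''\subseteq C\cup C'$, so I can decompose
\[
n_1:=|C''\cap(C-C')|,\qquad n_2:=|C''\cap(C'-C)|,\qquad n_3:=|C''\cap(C\cap C')|,
\]
with $n_1+n_2+n_3=s$, $n_1\le s-t$, $n_2\le s-t$, and $n_3\le t$.

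The key step is to exploit the constraints imposed by $i,j\notin C''$. Since $j\in C'-C$, we always get $n_2\le s-t-1$. For $i$ there are two subcases: if $i\in C-C'$ then $n_1\le s-t-1$, and if $i\in C\cap C'$ then $n_3\le t-1$. In the first subcase $n_1+n_2\le 2(s-t)-2$, giving $n_3\ge 2t-s+2$; in the second subcase $n_1+n_2\le(s-t)+(s-t-1)=2(s-t)-1$, giving $n_3\ge 2t-s+1$. In either subcase, $n_3\ge 2t-s+1$.

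To conclude, note that $|C''\cap C|=n_1+n_3$ and $|C''\cap C'|=n_2+n_3$, so
\[
|C''\cap C|+|C''\cap C'|=(n_1+n_2+n_3)+n_3=s+n_3\ge s+(2t-s+1)=2t+1.
\]
By pigeonhole, at least one of $|C''\cap C|$ and $|C''\cap C'|$ is at least $t+1$. Since $C''$ is a circuit distinct from both $C$ and $C'$, this contradicts the maximality of $|C\cap C'|=t$ among pairs of distinct circuits, completing the proof.

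The only place requiring care is the counting step, where the constant must be tight enough to survive both positional cases for $i$; that is where the hypothesis ``all circuits have the same size'' is essential (it pins $|C''|=s$). Everything else is bookkeeping.
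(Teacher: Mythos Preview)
Your proof is correct. The paper's argument reaches the same contradiction but by a shorter route: rather than splitting $C''$ into three pieces and doing a case analysis on the position of $i$, it observes directly that $C\cup C''\subseteq (C\cup C')-\{j\}$ (since $j\notin C$ and $j\notin C''$), so $|C\cup C''|<|C\cup C'|$, and then a single application of inclusion--exclusion together with $|C''|=|C'|$ gives $|C\cap C''|>|C\cap C'|$. The element $i$ is used only to guarantee $C''\neq C$. Your pigeonhole approach is more symmetric in $C$ and $C'$ and makes the role of the equal-size hypothesis very explicit, but it does more bookkeeping than is strictly necessary; the paper's version shows that one never needs to track where $i$ sits inside $C$.
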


\begin{proof}

If $(C \cup C')-\{i,j\}$ is a dependent set of $\Delta$ then there is a circuit $C'' \subseteq (C \cup C')-\{i,j\}$. Since $j \notin C$, then $C \cup C'' \subseteq (C \cup C')-\{j\}$. In particular, $|C \cup C''|<|C \cup C'|$, thus
$$
|C\cap C'|=|C|+|C'| - |C\cup C'|<|C|+|C'| - |C\cup C''| = |C|+|C''| - |C\cup C''|=|C\cap C''|,
$$
where the second equality from the right follows because, by assumption, $|C'|=|C''|$. The above contradicts the maximality of $|C\cap C'|$, therefore $(C \cup C')-\{i,j\}$ is an independent set of $\Delta$.

\end{proof}

\begin{lem}\label{doubleswap}
Let $\Delta$ be a matroid and let $C\neq C'$ be two circuits. Suppose there are bases of the form $F \cup (C \cup C')-\{x,y\}$ and $G \cup (C \cup C')-\{a,b\}$, for some independent sets $F,G\in \Delta$, and some vertices $x,a \in C$ and $y,b \in C'-C$.
 Then $F \cup (C \cup C')-\{a,b\} \in \Delta$.     
\end{lem}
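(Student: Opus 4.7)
The plan is to construct $T := F \cup ((C \cup C') - \{a, b\})$ as a basis of $\Delta$ by performing two successive fundamental-circuit exchanges starting from the given basis $B_1 := F \cup ((C \cup C') - \{x, y\})$. I read the displayed unions with the natural disjointness convention $F \cap (C \cup C') = \emptyset$ (otherwise a direct cardinality comparison shows $|T| > r(\Delta)$, making the conclusion vacuous). Under the hypotheses $x, a \in C$ and $y, b \in C' - C$, one automatically has $x \neq b$ and $y \neq a$, and a quick size argument rules out $x \in F$ or $y \in F$ (else $C$ or $C'$ would sit inside the independent set $B_1$). The degenerate cases $x = a$ and $y = b$ make one of the two exchanges below into a no-op (and if both hold then $T = B_1$), so I describe the generic subcase in which $x, y, a, b$ are four distinct elements; the edge cases reduce to a single exchange or to a tautology in exactly the same way.

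Step 1 (exchange $a$ for $x$ in $B_1$). Since $y \in C' - C$ we have $y \notin C$, hence
\[
C \subseteq F \cup ((C \cup C') - \{y\}) = B_1 + x.
\]
As $B_1$ is a basis and $C$ is a circuit, $C$ is the unique circuit of $B_1 + x$, i.e.\ $C(x, B_1) = C$. Since $a \in C$, the fundamental-circuit exchange yields a new basis
\[
B_1' := B_1 - a + x = F \cup ((C \cup C') - \{a, y\}).
\]

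Step 2 (exchange $b$ for $y$ in $B_1'$). This is valid provided $b$ lies in the unique circuit $C(y, B_1')$ contained in $B_1' + y = F \cup ((C \cup C') - \{a\})$. If $a \notin C'$, then $C' \subseteq B_1' + y$, so uniqueness gives $C(y, B_1') = C' \ni b$. If instead $a \in C \cap C'$, neither $C$ nor $C'$ sits inside $B_1' + y$; here I apply the strong circuit-elimination axiom to $C, C'$ at the common element $a$, with distinguished element $b \in C' \setminus C$, producing a circuit $D \subseteq (C \cup C') - \{a\}$ with $b \in D$. Since $D \subseteq B_1' + y$, uniqueness of the fundamental circuit forces $D = C(y, B_1')$, so again $b \in C(y, B_1')$. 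Either way the exchange yields the basis
\[
T = B_1' - b + y = F \cup ((C \cup C') - \{a, b\}),
\]
which is in particular a face of $\Delta$, as required.

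The only delicate point I anticipate is the subcase $a \in C \cap C'$: there the second fundamental circuit is no longer $C'$ itself, and one genuinely needs the strong circuit-elimination axiom (not merely the weak basis-exchange of Definition \ref{matroid}) to locate a circuit of $B_1' + y$ that contains $b$. I note in passing that the second basis $B_2 = G \cup ((C \cup C') - \{a, b\})$ does not enter the argument at all; the conclusion already follows from the existence of $B_1$ together with the assumed incidence pattern of $x, y, a, b$ with the two circuits.
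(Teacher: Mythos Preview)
Your proof is correct and takes a genuinely different route from the paper's. The paper passes to the dual matroid $\Delta^*$, sets $A=(B_1)^*$ and $B=(B_2)^*$, and performs symmetric basis exchanges in $\Delta^*$ between $A$ and $B$; the key constraint forcing the exchanged elements to be exactly $x$ and $y$ is Lemma~\ref{primehastwo} (that every cobasis meets $C\cup C'$ in at least two elements). In particular, the paper's argument genuinely uses the second basis $B_2=G\cup((C\cup C')-\{a,b\})$.

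Your argument stays in $\Delta$, identifies $C$ as the fundamental circuit $C(x,B_1)$ to swap $a$ for $x$, and then uses strong circuit elimination (when $a\in C\cap C'$) to locate $b$ in the fundamental circuit $C(y,B_1')$ and swap $b$ for $y$. This is more direct, avoids both the dualization and Lemma~\ref{primehastwo}, and, as you correctly observe, shows that the hypothesis on the second basis is superfluous: only the existence of $B_1$ and the incidence pattern $x,a\in C$, $y,b\in C'-C$ are needed. The paper's approach, on the other hand, fits naturally with its running theme of working with $\Ass(k[\Delta])$ (i.e., bases of $\Delta^*$) and reuses its earlier Lemma~\ref{primehastwo}.
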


\begin{proof}
Let $A:=(F \cup (C \cup C')-\{x,y\})^*$ and $B:=(G \cup (C \cup C')-\{a,b\})^*$ be the complements of the given bases, so $A,B$ are bases of $\Delta^*$. If $a \in A$, then we may suppose $b \notin A$, else there is nothing to prove. Since $x$ is the unique element of $C$ contained in $A$, we must have $x=a$. As $\Delta^*$ is a matroid, and as $b \in B-A$, there is an $i \in A-B$ so that $A-\{i\} \cup \{b\}$ and $B-\{b\} \cup \{i\}$ are facets of $\Delta^*$. If $i \ne y$, then $B-\{b\} \cup \{i\}$ contains only one element of $C \cup C'$, violating Lemma \ref{primehastwo}. Then $i=y$, and we have the claim in the case $a \in A$. 
Now suppose $a \notin A$. Again appealing to the matroid property of $\Delta^*$, there an a $j \in A-B$ so that $A-\{j\} \cup \{a\}$ and $B-\{a\} \cup \{b\}$ are facets of $\Delta^*$. If $j \ne x$, then as $x$ is the unique element of $C$ contained in $A$, and as $a$ is the unique element of $B$ contained in $C$, we would have $B-\{a\} \cup \{j\}$ contains no element of $C$. But this cannot be, since $C$ is a minimal nonface of $\Delta$. Thus $j=x$, and replacing $A$ by $A-\{x\} \cup \{a\}$, we have reduced to the case where $a \in A$, completing the proof. 
\end{proof}

In what follows, for $C \subseteq [n]$ and $i \ge 1$ we set $\alpha_C^i:=\{(a,b) \mid a \in C, 1 \le b \le i\}$. When $F \in \Delta$, we set $F^{(1)}:=\{(i,1) \mid i \in F\}$. 

\begin{prop}\label{ci1}

Let $\Delta$ be a matroid. Suppose $C\neq C'$ are overlapping circuits for which $|C \cap C'|$ is maximal. For any $m \ge 1$, let $W:=\alpha_C^{m-1} \cup \alpha^2_{C'-C} \cup \{(i,1) \mid i \notin C \cup C'\}$. If $m \ge 3$, then 
\[\tilde{H}_{(m-1)|C|-|C \cap C'|+d-1}(\Delta^{(m)}|_W) \ne 0.\]
    
\end{prop}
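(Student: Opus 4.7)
The plan is to adapt the strategy of Proposition~\ref{circuit1}: build an explicit nonzero cycle in $\Delta^{(m)}|_W$ of the claimed dimension as a join of two combinatorial spheres---one for each circuit---with a matroidal link-cycle in $\Delta$, and rule out its being a boundary by identifying a facet of $\Delta^{(m)}|_W$ on which the cycle has a nonzero coefficient.

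Specifically, I would set $\Gamma_1:=\Skel^{(m-1)|C|-2}(\langle\alpha_C^{m-1}\rangle)$ and $\Gamma_2:=\Skel^{2|C'-C|-2}(\langle\alpha_{C'-C}^2\rangle)$, the standard codimension-one spheres on $\alpha_C^{m-1}$ and $\alpha_{C'-C}^2$, together with $\beta:=\bigcup_{i\in C,\,j\in C'-C}\lk_\Delta((C\cup C')-\{i,j\})^{(1)}$. By Lemma~\ref{maximaloverlap} every $(C\cup C')-\{i,j\}$ is independent in $\Delta$, so each link is a nonempty matroid; Lemma~\ref{doubleswap} ensures that these matroids share the same facet set, so $\beta$ behaves as a single matroidal complex, and after first reducing $\Delta$ to be coloopless via Remark~\ref{variables}, an argument analogous to the one in Proposition~\ref{circuit1} gives that the top reduced homology of $\beta$ is nonzero. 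The first step is then to verify $\Gamma_1*\Gamma_2*\beta\subseteq\Delta^{(m)}|_W$: a typical facet of this join has the shape $F^{(1)}\cup(\alpha_C^{m-1}-\{(i,a)\})\cup(\alpha_{C'-C}^2-\{(j,b)\})$ with $F\in\mathcal{F}(\lk_\Delta((C\cup C')-\{i,j\}))$, and it is the $W$-complement of the associated prime $\p:=\{(k,1):k\in[n]-(C\cup C')-F\}\cup\{(i,a),(j,b)\}$ of $k[\Delta^{(m)}]$; since $\bar{\p}$ is the cobasis of the basis $F\cup((C\cup C')-\{i,j\})$ of $\Delta$ and $\ssi{\p}=a+b+(c-2)\le c+m-1$, it lies in $\Ass(k[\Delta^{(m)}])$ by Proposition~\ref{polprimes}.

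The core of the argument is showing that the particular face $G:=F^{(1)}\cup(\alpha_C^{m-1}-\{(i,m-1)\})\cup(\alpha_{C'-C}^2-\{(j,2)\})$ is a facet of $\Delta^{(m)}|_W$, not merely of the subcomplex $\Gamma_1*\Gamma_2*\beta$. Its complement in $W$ has exactly $c$ elements, so ruling out an enlargement amounts to showing that no associated prime $\p'$ of $k[\Delta^{(m)}]$ has $\p'\cap W$ properly contained in that complement---equivalently, $\p'$ must contain each of $(i,m-1)$, $(j,2)$, and every $(k,1)$ with $k\in[n]-(C\cup C')-F$. For each of these three possible omissions, a budget analysis on $\ssi{\p'}\le c+m-1$---where $i_\ell\in C$ forces $a_\ell\ge m-1$, $i_\ell\in C'-C$ forces $a_\ell\ge 2$, and $i_\ell\in F$ forces $a_\ell\ge 2$---uses $m\ge 3$ to force $|\bar{\p'}\cap C|=1$. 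By Lemma~\ref{primehastwo} $\bar{\p'}$ meets $C\cup C'$ in at least two points; once a short case analysis eliminates the subcases where $\bar{\p'}$ meets $C'-C$ or too many elements of $F$, the unique element of $\bar{\p'}\cap C$ must lie in $C\cap C'$, and the strong circuit elimination axiom (producing a circuit inside $(C\cup C')-\{y\}$ for any $y\in C\cap C'$) yields a circuit inside the complement of $\bar{\p'}$, contradicting that $\bar{\p'}$ is the cobasis of a basis of $\Delta$.

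With $G$ confirmed as a facet of $\Delta^{(m)}|_W$, I would assemble the nonzero homology class. Let $z_1,z_2$ be the fundamental cycles of $\Gamma_1,\Gamma_2$ (each a fully supported alternating sum over facets of the respective sphere) and let $w$ be a nonzero top-dimensional cycle of $\lk_\Delta((C\cup C')-\{i,j\})$ with nonzero coefficient on $F$. Iterating Lemma~\ref{chain} turns $z_1\otimes z_2\otimes w^{(1)}$ into a cycle $\bar z$ in $\Gamma_1*\Gamma_2*\beta\subseteq\Delta^{(m)}|_W$ whose dimension, computed from the join formula, equals $(m-1)|C|-|C\cap C'|+d-1$ under the assumption $|C|=|C'|$ that is supplied in the intended application via Proposition~\ref{equigen}. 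The coefficient of $\bar z$ on $G$ is the nonzero product of the corresponding coefficients of $z_1,z_2,w$, so by Lemma~\ref{noboundary} $\bar z$ is not a boundary, hence $[\bar z]\ne 0$. The principal obstacle will be the facet verification in the previous paragraph: the $\ssi$-budget argument is tight precisely because $m\ge 3$, and must be combined carefully with Lemmas~\ref{maximaloverlap}, \ref{doubleswap}, \ref{primehastwo} and the strong circuit elimination axiom to rule out every would-be extension of $G$.
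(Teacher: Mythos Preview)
Your proposal is correct and follows essentially the same architecture as the paper's proof: define the join of the two codimension-one spheres $\Gamma_1,\Gamma_2$ with the union $\beta$ of links, verify it sits inside $\Delta^{(m)}|_W$ via Lemma~\ref{doubleswap} and Proposition~\ref{polprimes}, check that the specific face $G=F^{(1)}\cup(\alpha_C^{m-1}-\{(i,m-1)\})\cup(\alpha_{C'-C}^2-\{(j,2)\})$ is a facet of $\Delta^{(m)}|_W$, and conclude via Lemmas~\ref{chain} and~\ref{noboundary}. The only substantive difference is in the facet verification: the paper argues the three extension cases directly from Lemma~\ref{primehastwo} and a short $\ssi{}$-count, whereas you frame the same obstruction as an explicit budget analysis combined with strong circuit elimination; both routes use $m\ge3$ at the identical spot and arrive at the same contradiction. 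Your observation that the stated homological degree matches $|W|-c-1$ only when $|C|=|C'|$ is accurate and mirrors how the paper deploys the result (after Proposition~\ref{equigen}).
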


\begin{proof}

Let 
\[\Gamma:=\Skel^{((m-1)|C|-2)}(\langle \alpha_C^{m-1} \rangle)*\Skel^{(2(|C'-C|)-2)}(\langle \alpha^2_{C'-C} \rangle)*(\bigcup_{\substack{U \in C \cup C' \\ |U|=2}} \lk_{\Delta}((C \cup C')-U)\]

We first claim that $\Gamma$ is a subcomplex of $\Delta^{(m)}|_W$. To see this take 
\[G:=F^{(1)} \cup \alpha_C^{m-1} \cup \alpha^2_{C'-C}-\{(i,a)\}-\{(j,b)\}\] where $F \in \mathcal{F}(\lk_{\Delta}((C \cup C')-\{x,y\})$, where $i \in C$ and where $j \in C'-C$. By Lemma \ref{doubleswap}, we have $F \cup ((C \cup C')-\{i,j\}) \in\mathcal{F}(\Delta)$. Then $\p_F+(x_i,x_j) \in \Ass(k[\Delta])$ so $(x_{\ell,1} \mid \ell \notin F)+(x_{i,m-1})+(x_{j,2}) \in \Ass(k[\Delta^{(m)}])$. It follows from construction that $G$ is contained in the facet of $\Delta^{(m)}$ corresponding to this prime, so $G \in \Delta^{(m)}|_W$.

Now we claim, for any $i \in C$ and $j \in C'-C$ and any $F \in \mathcal{F}(\lk_{\Delta}((C \cup C')-\{i,j\}))$ that $G:=F^{(1)} \cup \alpha_C^{m-1} \cup \alpha^2_{C'-C}-\{(i,m-1)\}-\{(j,2)\}$ is a facet of $\Delta^{(m)}|_W$. To see this, we note that the complement of $G$ in $\Delta^{(m)}|_W$ is $(F^*)^{(1)} \cup \{(i,m-1)\} \cup \{(j,2)\}$. If $G$ is not a facet of $\Delta^{(m)}|_W$ then there must be some facet $\sigma$ of $\Delta^{(m)}$ containing both $G$ and some element of $(F^*)^{(1)} \cup \{(i,m-1)\} \cup \{(j,2)\}$. If $x_{a,1} \in \sigma$ for some $a \notin F$, then $\bar{\sigma}$ -- which, let us recall, is the image of $\sigma$ under the projection $\Delta^{(m)}\lra \Delta$ onto the first coordinate -- is a facet of $\Delta$ containing $F$, $a$, and $(C \cup C')-\{i,j\}$ which contradicts the assumption that $F \in \mathcal{F}(\lk_{\Delta}((C \cup C')-\{i,j\}))$. Otherwise, $\sigma$ must contain either $(i,m-1)$ or $(j,2)$. If $(i,m-1) \in \sigma$, then we note, by Lemma \ref{primehastwo}, that every facet of $\Delta$ must exclude at least two elements of $C \cup C'$, one of $C$ and one of $C'$. Noting that $j \in C'-C$, there must thus be some $b \in C$ with $(b,\ell) \notin \sigma$. But as $\sigma$ contains $G$, it must be that $\ell=m$. However, $(j,2)$ is also not in $\sigma$ which contradicts the definition of $\Delta^{(m)}$. On the other hand, if $(j,2) \in \sigma$, then from above we have $(i,m-1) \notin \sigma$, so by Lemma \ref{primehastwo} there must be some $(u,2) \notin \sigma$ with $u \in C \cup C'$. But as $m \ge 3$, any such element is already contained in $G$, and it follows that $G$ is a facet of $\Delta^{(m)}|_W$.

Now, $\Delta$ is a matroid which is not a cone, thus $\lk_{\Delta}((C \cup C')-\{i,j\})$ is not acyclic for any $i \in C$ and $j \in C'-C$, noting that $C \cup C'-\{i,j\} \in \Delta$ by Lemma \ref{maximaloverlap}. As $\Skel^{(m-1)|C|-2}(\alpha^{m-1}_C)$ and $\Skel^{(2|C'-C|-2)}(\alpha^2_{C'-C})$ are codimension $1$ spheres, they each have homology generators with full support. Since $\Gamma$ is a subcomplex of $\Delta^{(m)}|_W$ then, by Lemma \ref{chain},  $\Delta^{(m)}|_W$ has a cycle in degree $(m-1)|C|-|C \cap C'|+d-1$ with support on a facet of the form $F^{(1)} \cup \alpha_C^{m-1} \cup \alpha^2_{C'-C}-\{(i,m-1)\}-\{(j,2)\}$. In particular Lemma \ref{noboundary} implies this element cannot be a boundary, so $\tilde{H}_{(m-1)|C|-|C \cap C'|+d-1}(\Delta^{(m)}|_W) \ne 0$.

\end{proof}

\begin{proof}[Proof of Theorem \ref{levelthm}] 
The implications $(2) \Rightarrow (3)$ and $(4) \Rightarrow (5)$ are obvious, $(1) \Rightarrow (2)$ follows essentially from the results in \cite{BE75}, see e.g. \cite[Thm.~2.1]{Sr89} or \cite[Thm.~2.1]{GV05}. $(3) \Rightarrow (4)$ follows from Theorem \ref{main2}. Thus we need only concern ourselves with $(5) \Rightarrow (1)$. Suppose $R/I_{\Delta}^{(m)}$ is level. It follows from Theorem \ref{cmsymb} that $\Delta$ is a matroid, and from Proposition \ref{equigen} that the circuits of $\Delta$ have the same size, i.e. $I_\Delta$ is equigenerated.

We note from Lemma \ref{polcm} that $R/I_{\Delta}^{(m)}$ is level if and only if $\Delta^{(m)}$ is level. Now, if $\Delta$ is not a complete intersection, then there exists two distinct circuits $C\neq C'$ of $\Delta$ with nonempty overlap, and we may pick $C$ and $C'$ to maximize the size of the overlap.  By Proposition \ref{circuit1}, there is a $W$ with $|W|=(m-1)|C|+n$ for which $\tilde{H}_{|W|-c-1}(\Delta^{(m)}|_W) \ne 0$, and by Proposition \ref{ci1} there is a $W'$ with $|W'|=(m-1)|C|-|C \cap C'|+n$ for which $\tilde{H}_{|W'|-c-1}(\Delta^{(m)}|_{W'}) \ne 0$. Since $|C \cap C'|>0$, then we have $|W| \ne |W'|$. But then Hochster's formula \cite[Theorem 5.5.1]{BH93} implies that $ \Delta^{(m)}$ is not level. Then there cannot be overlapping circuits in $\Delta$, i.e. $\Delta$ is a complete intersection, completing the proof.

\end{proof}

\section*{Acknowledgements} 
This material is based upon work supported by the National Science Foundation under Grant No. DMS-1928930 and by the Alfred P. Sloan Foundation under grant G-2021-16778, while the first author was in residence at the Simons Laufer Mathematical Sciences Institute (formerly MSRI) in Berkeley, California, during the Spring 2024 semester. 

The second author was partly supported by Simons Foundation Grant \#962192.


\bibliographystyle{plain}

\bigskip
\smallskip
\end{document}